\definecolor{lightblue}{rgb}{0.22,0.45,0.70}
\newtheorem{remark}{Remark}[section]
\newtheorem{lemma}{Lemma}[section]
\newtheorem{theorem}{Theorem}[section]
\newtheorem{proposition}{Proposition}[section]
\renewcommand{\O}{\Omega}
\newcommand\E{{E}}
\newcommand\mS{\mathcal S}
\newcommand\mP{\mathcal P}
\def\qon{{\quad\hbox{on}\quad}}
\newcommand\R{\mathbb{R}}
\newcommand\N{\mathbb{N}}
\renewcommand{\P}{{\mathbb P}}  
\newcommand{\PP}{ \mathbf{P} }  
\newcommand{\M}{ \mathbb{M}}
\def\S{\mathrm{S}}
\renewcommand{\SS}{\mathbf{S}}
\def\dof{{\rm dof}}
\def\Tol{{\rm Tol}}
\def\Cb{\boldsymbol{C}}
\def\gb{{\bf g}}
\def\vb{{\bf v}}
\def\fb{{\bf f}}
\def\nb{{\bf n}}
\def\Zb{{\bf Z}}
\def\H{{\rm H}}
\def\HH{\mathbf{H}}
\def\L{{\rm L}}
\def\LL{\mathbf{L}}
\def\WW{\mathbf{W}}
\def\W{{\rm W}}
\def\X{{\rm X}}
\def\nbs \boldsymbol{n}
\def\0{\boldsymbol{0}}
\def\div{{\rm div} \:}
\def\calD{\mathcal D}
\def\curl{{\mathbf{curl}} \:}
\def\bc{\mathbf{c}}
\def\D{ \mathbf{D}}
\def\HdoO{{\H_0^2(\O)}}
\def\HuoO{{\H_0^1(\O)}}
\def\CT{\mathcal{T}}
\def\CT{\O}
\def\WtK{\widetilde{\W}_k^{h}(\E)}
\def\WK{\W_k^{h}(\E)}
\def\Wh{\W_k^{h}}
\def\Xh{\X_{k,\ell}^{h}}
\def\HtE{\widetilde{\H}_{\ell}^h(\E)}
\def\HE{\H_{\ell}^h(\E)}
\def\Hh{\H_{\ell}^h}
\def\HdoK{{\H^{2}(\E)}}
\newcommand\bn{{\bf n}}
\def\dn{\partial_{\bn}}
\def\PiK{\Pi_{\E}^{\D,k}}
\def\PiK{\Pi_{\E}^{\D,k}}
\def\PinablaK{\Pi_{\E}^{\nabla, \ell}}
\def\PicurlK{\Pi_{\E}^{\mathbf{c},k}}
\def\PioK{\Pi^{k-2}_{\E}}
\def\PimunoK{\boldsymbol{\Pi}_{\E}^{k-1}}
\def\PiellK{\boldsymbol{\Pi}^{\ell-1}_{\E}}
\def\etanpsi {\eta^n_{\psi}}
\def\varnpsi {\varphi^n_{\psi}}
\def\etantheta {\eta^n_{\theta}}
\def\varntheta {\varphi^n_{\theta}}	
\def\aAFh{ \widehat{\alpha}_{A_F}  }
\def\aATh{ \widehat{\alpha}_{A_T}  }
\def\aMFh{ \widehat{\alpha}_{M_F}  }
\def\aMTh{ \widehat{\alpha}_{M_T}  }
\def\CMFh{ \widehat{C}_{M_F}  }
\def\CMTh{ \widehat{C}_{M_T}  }
\def\CBF{ C_{B_F} }
\def\CBT{C_{B_T} }
\def\CBFh{ \widehat{C}_{B_F}  }
\def\CBTh{\widehat{C}_{B_T} }
\newcommand\rot{\mathop{\mathrm{rot}}\nolimits}
\renewcommand\skew{\mathop{\mathrm{skew}}\nolimits}
\newcommand\bv{\boldsymbol{v}}
\newcommand\bu{\boldsymbol{u}}
\newcommand\bt{{\bf t}}
\def\dofs{{\rm dofs}}
\def\Ra{{\rm Ra}}
\def\Pr{{\rm Pr}}
\def\Nu{{\rm Nu}_{local}}
\def\DXu{{\bf D_W1}}
\def\DXd{{\bf D_W2}}
\def\DXt{{\bf D_W3}}
\def\DXc{{\bf D_W4}}
\def\DXf{{\bf D_W5}}
\def\DHu{{\bf  D_H1}}
\def\DHd{{\bf  D_H2}}
\def\DHt{{\bf  D_H3}}
\def\CI{C_{I} }
\def\Cinv{C_{{\tt inv}}}
\def\CR{C_{{\tt reg}}}
\newenvironment{proof}{\noindent{\it Proof.}}{\hfill$\square$\\}
\begin{document}
	
\title{A fully-discrete virtual element method for the nonstationary Boussinesq equations}
\author{L. Beir\~ao da Veiga\thanks{Dipartimento di Matematica e Applicazioni,
Universit\`a degli Studi di Milano Bicocca, Via Roberto Cozzi 55, Milano, Italy and
IMATI-CNR, Via Ferrata 1, Pavia, Italy. 
E-mail: {\tt lourenco.beirao@unimib.it}.},\quad 
D. Mora\thanks{GIMNAP, Departamento de Matem\'atica, Universidad
del B\'io-B\'io, Concepci\'on, Chile and
CI$^2$MA, Universidad de Concepci\'on, Concepci\'on, Chile.
E-mail: {\tt dmora@ubiobio.cl}.},\quad 
A. Silgado\thanks{GIMNAP, Departamento de Matem\'atica,
Universidad del B\'io-B\'io, Concepci\'on, Chile. E-mail:
{\tt alberth.silgado1701@alumnos.ubiobio.cl}.}}

	\date{}
	\maketitle
	
	\begin{abstract}
		
In the present work we propose and analyze a fully coupled virtual 
element method of high order for solving the two dimensional nonstationary Boussinesq 
system in terms of the stream-function and temperature fields. 
The discretization for the spatial variables is based on the coupling $C^1$- and $C^0$-conforming 
virtual element approaches, 
while a backward Euler scheme is employed for the temporal variable.
Well-posedness and unconditional stability of the fully-discrete problem is provided. 
Moreover,  error estimates in $\H^2$- and $\H^1$-norms are derived for the stream-function 
and temperature, respectively. Finally, a set of benchmark tests are reported to confirm 
the theoretical error bounds and illustrate the behavior of the fully-discrete scheme.
\end{abstract}
	
\noindent
{\bf Key words}: Virtual element method, nonstationary Boussinesq equations, stream-function form, 
error estimates.
	
\smallskip\noindent
{\bf Mathematics subject classifications (2000)}:  65M12, 65M15, 65M60, 76D05

\maketitle
\section{Introduction}	

The Boussinesq system is typically used to describe the natural convection 
in a viscous incompressible fluid, which consists of coupling
between the Navier-Stokes equations with a convection-diffusion equation.
Such coupling is done by means of a buoyancy term  (in the momentum equation 
of the Navier-Stokes system) and convective heat transfer 
(in the  energy equation). Applications of this fluid-thermal system appears in 
several  engineering processes, such as, industrial ovens,
cooling procedures (cooling of steel industries, electronic and electric equipments, 
nuclear reactors, etc). Moreover, this physical phenomena appears in oceanography and 
geophysics  when studying oceanic flows and climate predictions.

Due its relevance and presence in different applications, several works have been devoted to 
study these equations (and some variants). For the analysis of existence, uniqueness 
and regularity of the solution, we refer to~\cite{Morimoto92,LB99}.
Besides, over the last decades several discretizations
have been employed to solve this system; see for instance ~\cite{BMP1995,BL90,TT2005,ZHZ2014_camwa,OQS2014,ABS2015,DA2016,CGO2016,DGN2019,AHMS2018} 
and the references therein, where the steady and unsteady regimens, temperature-dependent 
parameters problems have been studied, considering the classical 
velocity--pressure--temperature and pseudostress--velocity--temperature formulations.
	
Typically, in the existing literature, the majority of the discretizations for the 
fluid part involve the standard velocity--pressure formulation for the Boussinesq system. 
However, some researchers have developed numerical methods
by using the stream-function--vorticity and pure stream-function approaches to approximate 
this system. For instance, in~\cite{stevens1982} a finite element 
discretization is considered to solve the problem in stream-function--vorticity--temperature 
form, numerical solutions are obtained for the natural convection in a square cavity and compared 
with some results available in the literature.
In \cite{TG2003} a fourth-order compact finite difference scheme is formulated for 
solving the steady regimen, by using also the stream-function--vorticity--temperature 
formulation. Numerical experiments are also presented. More recently, 
in~\cite{WLJ2003,WLJ2004}, the authors present the analysis of stability and convergence  
for a fourth-order finite difference method for the unsteady regimen of 
Boussinesq equations with the stream-function--vorticity--temperature approach 
is established. Numerical results are provided in \cite{WLJ2003}.
On the other hand, in~\cite{BH1983}, the authors employed a $C^1$ finite element method 
to approximate the stream-function  variable. Numerical solution for the $2$D natural convection
in a square cavity are presented and compared with benchmark results~\cite{Vahl83}. 

For two dimensional fluid problems, the formulation in terms of the stream-function 
presents several attractive features, among these we can mention: the velocity vector 
and pressure fields are not present in the formulation, instead only one scalar variable 
(the stream-function) is the main unknown to approximate. By construction the  
incompressibility constraint is automatically satisfied. 
Moreover, the resulting trilinear form in the 
momentum equation is  naturally skew-symmetric, which allows more direct stability and 
convergence arguments.
On the other hand, in comparison with the stream-function--vorticity 
form, our approach avoid the difficulties related with the definition of the 
boundary values for the vorticity field, present in such formulation.

 Nevertheless, the construction of subspaces  of $\H^2$ (space where the 
stream-function belongs) by using finite element method involve high order polynomials and a 
 large number of degrees of freedom, which are considered a difficult task principally 
    from the computational viewpoint, even for triangular decompositions. As an alternative to avoid 
    the aforementioned drawback, we consider the approach presented in \cite{BM13,ChM-camwa} to introduce $C^1$-virtual element schemes of arbitrary order $k \geq 2$, to approximate the stream-function variable of the Boussinesq system.

	The Virtual Element Methods (VEM) were introduced in the seminal work~\cite{BBCMMR2013} 
	as an extension of Finite Elements Methods (FEM) to polygonal or polyhedral decompositions. 
	In this first work the Poisson equation is used to illustrate the main ideas of VEM approach.
	The virtual element spaces are constituted by polynomial and nonpolynomial functions,  
	the degrees of freedom must be chosen appropriately so that the stiffness matrix and load term 
	can be computed without computing these nonpolynomial functions. 
	Later on, in \cite{BM13} is introduced a new family of $C^1$-virtual element of high order $k \geq 2$,	to solve  Kirchhoff-Love plate problems, which in the lowest order polynomial degree employed only $3$ degrees of freedom  per mesh vertex (the function and its gradient values vertex). 
	This fact represents a very significant advantage over $C^1$ schemes based on FEM. Moreover, in~\cite{BMan13,AMSV2021}, the authors discuss 
	the application of VEM to construct finite dimensional spaces of arbitrarily regular $C^{\alpha}$, 
	with $\alpha \geq 1$, where promising results have been observed to solve equations involving  
	high-order PDEs. In the last year a wide variety of second- and fourth-order problems have been  
	discretized by using VEM. Due to the large number of papers available in the literature, we here limit ourselves in citing some representative articles within the area of 
	fluid mechanics, where several models have been addressed with the {\it conforming}
	VEM approach: the Stokes equations~\cite{ABMV2014,ChM2020-ACM,BLV-M2AN,VK2021}, 
	the Brinkman model~\cite{CGS17,MRS2021-IMAJNA}, Navier-Stokes and incompressible flows~\cite{BLV-NS18,BMV2019,FM2020-IMAJNA,BDV2020,BPV2021,DGR2022}, the 
	Quasi-Geostrophic equations of the ocean~\cite{MS2021-camwa} and 
	Boussinesq system~\cite{GMS2021,AVV2022},  
	where different formulations have been considered.

	According to the previously discussed, in the present contribution, we are
	interested in further exploring the ability of VEM to approximate 
	coupled nonlinear fluid flow problems considering the stream-function approach.
    More precisely, we develop and analyze a fully-discrete VE scheme for solving 
    the nonstationary Boussinesq system. Under assumption that the domain is 
	simply connected and by using the incompressibility condition of the velocity field,
	we write a equivalent variational formulation in terms of the stream-function and 
	temperature unknowns.
    The discretization for the spatial variables is based on the \textit{coupling} of $C^1$- and $C^0$- conforming    virtual element approaches~\cite{BM13,BBCMMR2013}, for the stream-function and temperature 
    fields, respectively, and  we handle the time derivatives with a classical backward Euler implicit method.
    Employing the discretizations mentioned above, we propose a fully-discrete scheme of high-order, 
    which is fully coupled, implicit in the nonlinear terms and unconditionally stable. 
    By using the fixed point theory, we establish the corresponding existence of a discrete solution and, 
    under a small time step assumption, we prove that such discrete solution is also unique.
    Moreover, employing the natural skew-symmetry property of the resulting discrete trilinear 
    form (in the momentum equation) we provide optimal error estimates in $\H^2$- and $\H^1$-norms 
    for the stream-function and temperature, respectively.

		
	The remainder of this paper has been organized as follows: In Section~\ref{preli:cont:problem}
    we provide preliminaries notations and recall the unsteady Boussinesq equations in its standard
    velocity--pressure--temperature formulation. Moreover, we write a weak form of the system in terms
    of the stream-function and temperature variables. We finish this section by recalling the corresponding  
    stability and well-posedness results for the continuous problem. In Section~\ref{VEM:section} we present the VE discretization, introducing the polygonal decomposition and mesh notations, the constructions of stream-function and temperature VE spaces along with their corresponding degrees of freedom,  the polynomial projections  and the construction of the multilinear forms. In Section~\ref{fullydiscrete:BSE} we present the 
    fully-discrete VE formulation and provide its stability and well-posedness. In Section~\ref{SEC:convergence}
    we derive error estimates for the stream-function and temperature fields. 
    Finally,  three numerical experiments, including the solution of the $2$D natural convection
    benchmark problem, are presented in Section~\ref{Numerical:experiments}, to illustrate the good performance of 
    the scheme and confirm our theoretical predictions.

\section{Preliminaries and the continuous problem}\label{preli:cont:problem}

We start this section introducing some preliminary notations 
that will be used throughout this work.
Thenceforth, $\O$ will  denote a polygonal simply connected  
bounded domain of $\R^2$, with boundary $\Gamma:=\partial\O$ and 
$\bn=(n_i)_{1\le i\le2}$ is the outward unit normal vector 
to the boundary $\Gamma$ and  $\bt=(t_i)_{i=1,2}:=(-n_2,n_1)$ 
is the unit tangent vector to $\Gamma$. Moreover, we denote by $\partial_\bn$ 
to the normal derivative. 
According to \cite{AF2003}, for any open measurable bounded domain $\calD \subseteq \O$, 
we will employ the usual notation for the Banach spaces $\L^p(\calD)$ 
and the Sobolev spaces $\W_p^{s}(\calD)$, with $s \geq 0$ and $p \in [1,+ \infty]$, 
with the corresponding seminorms 
and norms are denoted by $|\cdot|_{\W_p^{s}(\calD)}$ 
and  $\|\cdot\|_{\W_p^{s}(\calD)}$, respectively.
We adopt the convention $\W_p^{0}(\calD):=\L^p(\calD)$ and in particular when $p=2$, 
we  write $\H^{s}(\calD)$  instead to  $\W_2^{s}(\calD)$, the corresponding
seminorm and norm  of these space will be denoted by $|\cdot|_{s,\calD}$ 
and  $\|\cdot\|_{s,\calD}$, respectively. 
Furthermore, we denote by $\SS$ the corresponding vectorial version of a generic scalar $\S$,
examples of this are: $\LL^{p}(\calD):=[\L^{p}(\calD)]^2$ and $\WW_p^{s}(\calD):=[\W_p^{s}(\calD)]^2$.

We denote by $t$ the temporal variable with values
in the interval $I:=(0,T]$, where $T>0$ is a given final time.
Moreover, given a Banach space $V$ endowed with the
norm $\|\cdot \|_V$, we define the space  $\L^p(0,T;V)$ as the space of classes of functions 
$\phi:(0,T) \to V$ that are Bochner measurable and such that $\|\phi\|_{\L^p(0,T;V)} < \infty$, with	
\begin{equation*}
	\begin{aligned}
		\|\phi\|_{\L^p(0,T,V)}:=\left(\int_0^T \|\phi(t)\|_{V}^p \mathrm{d} t \right)^{1/p} \quad \text{and} \quad
		\|\phi\|_{\L^{\infty}(0,T,V)}:=\underset{t \in [0,T]}{\text{ess sup}}\|\phi(t)\|_{V}.
	\end{aligned}
\end{equation*} 	
\subsection{The time dependent Boussinesq system} 
In this work we are interested in approximating the solution 
of the nonstationary Boussinesq system, modeling
incompressible nonisothermal fluid flows. 
The system consists of a coupling between the Navier-Stokes
equations with a convection-diffusion equation for the temperature variable.
The coupling is by means of a buoyancy term (in the momentum equation of the Navier-Stokes system)
and convective heat transfer (in the  energy equation). 
More precisely,
given suitable initial data $(\bu_0,\theta_0)$, the  aforementioned system of equations are given by  (see \cite{Morimoto92}):

\begin{equation}\label{Bouss:Eq}
	\begin{split}
		\partial_t \bu - \nu \Delta\bu + (\bu\cdot\nabla)\bu + \nabla p-\gb \theta
		&=\fb_{\psi}\qquad\textrm{in}\quad\Omega \times (0,T),\\
		\div\bu&= 0\qquad \: \:\textrm{in}\quad\Omega \times (0,T),\\
		\bu&=\0 \qquad \: \:\textrm{on}\quad\Gamma\times (0,T),\\
		\bu(0)&=\bu_0  \qquad\textrm{in}\quad\Omega \text{ at } t=0,\\
		(p,1)_{0,\O} &=0\\
		\partial_t \theta- \kappa  \Delta\theta + \bu\cdot\nabla\theta &=f_{\theta} \: \qquad\textrm{in}\quad\Omega \times (0,T),\\
		\theta&=0  \: \: \: \qquad\textrm{on}\quad\Gamma \times (0,T),\\
		\theta(0)&=\theta_0   \: \:\qquad\textrm{in}\quad\Omega \text{ at } t=0,
	\end{split}
\end{equation} 
where $\bu: \Omega \times (0,T) \to  \R^2$, $p: \Omega \times (0,T) \to  \R$ and $\theta: \Omega \times (0,T) \to  \R $ denote the  velocity, pressure and temperature fields.
The parameters $\nu >0$ and $\kappa >0$ are the viscosity fluid and the thermal conductivity, respectively.   The functions $\fb_{\psi}: \O \times (0,T) \to \R^2$, $f_{\theta}: \O \times (0,T) \to \R$ is a set of external forces and  $\gb: \O \times (0,T) \to \R^2$ is a force per unit mass. 

In next subsection, by using the incompressibility property of the velocity field, 
we will write an equivalent weak formulation of the system~\eqref{Bouss:Eq} in terms
of the stream-function and temperature variables.

\subsection{The time dependent stream-function--temperature formulation}\label{weak_form_section}
Let us introduce the following space of functions belonging to $\HH_0^1(\O)$
with vanishing divergence:
\[
\Zb:= \left\{\bv \in\HH_0^1(\O): \div \bv =0 \right\}.
\]
Since $\O\subset \R^2$ is simply connected, a well known result
states that a vector function $\bv \in \Zb$ if and only if there exists a scalar 
function $\varphi \in \H^2(\O)$ (called \textit{stream-function}), such that
$$\bv =\curl \varphi \in \HH_0^1(\O).$$
The function $\varphi$ is defined up to a constant (see \cite{GR}). 
Thus, we consider the following space 
\begin{equation*}
\HdoO= \left \{\varphi \in \H^2(\O) : \varphi = \dn\varphi = 0  \qon \Gamma  \right \}.
\end{equation*}
Then, choosing  $\psi(t)\in \HdoO$  the stream-function of the velocity field $\bu(t)\in\Zb$ 
(i.e. $\bu(t)=\curl \psi(t)$)  in the momentum equation of system~\eqref{Bouss:Eq}, testing  against 
a function $\bv=\curl \phi$ with $\phi \in  \HdoO$  and applying twice an integration by part, we have 
\begin{equation*}
\int_{\O}\curl(\partial_t \psi)\cdot\curl \phi +	
\nu \int_{\O}\D^2 \psi:\,\D^2 \phi+\int_{\O} \Delta \psi \,   \curl \psi \cdot \nabla\phi
		-\int_{\O}  \gb \theta \cdot \curl \phi =\int_{\O} \fb_{\psi} \cdot \curl \phi  \qquad \forall \phi \in  \HdoO. 
	\end{equation*}
On other hand, multiplying by $v \in \H_0^{1}(\O)$ and  integrating by parts in
the energy equation of system~\eqref{Bouss:Eq}, we obtain 
\begin{equation*}
\int_{\O}\partial_t\theta v +	\kappa \int_{\O}\nabla \theta \cdot \nabla v
+\int_{\O}  (\curl \psi \cdot \nabla \theta )v = \int_{\O} f_{\theta} v \qquad  \forall v \in \H_0^{1}(\O). 
\end{equation*}
From the above identities, we obtain the following weak formulation 
for system~\eqref{Bouss:Eq}: given $\psi_0 \in  \H_0^1(\O)$, $\theta_0 \in \L^2(\O)$,  
$\gb \in \L^{\infty}(0,T;\LL^{\infty}(\O))$, and  the external forces $\fb_{\psi} \in \L^{2}(0,T;\LL^{2}(\O)),
f_{\theta} \in \L^{2}(0,T;\L^{2}(\O)) $, find $(\psi,\theta)\in \L^2(0,T;\H_0^2(\O))  \times\L^2(0,T;\H_0^1(\O))$ such that 
\begin{equation}\label{coupled-stream1}
\begin{split}
M_F(\partial_t \psi, \phi) +\nu A_F(\psi,\phi) +B_F(\psi;\psi,\phi)	- C(\theta, \phi) &=F_{\psi}(\phi) \qquad \forall \phi \in \HdoO, \quad \text{for a.e.  $t \in (0,T)$},\\
M_T(\partial_t\theta,v)   + \kappa A_T(\theta,v) +B_{T}(\psi;\theta,v)&=F_{\theta}(v)  \:\qquad \forall v \in \HuoO, \quad \text{for a.e.  $t \in (0,T)$},\\
\psi(0)=\psi_0, \quad \qquad  \theta(0)&=\theta_0,
\end{split}
\end{equation}
where the bilinear forms  $M_F(\cdot,\cdot)$, $M_T(\cdot,\cdot)$, $A_F(\cdot,\cdot)$ and $A_T(\cdot,\cdot)$ are given by 
\begin{align}
M_F(\cdot,\cdot): \HdoO \times \HdoO &\to \R,  \qquad
M_F(\varphi,\phi) :=\int_{\O} \curl \varphi\cdot\curl \phi,   \label{MF-cont}\\
M_T(\cdot,\cdot): \H_0^1(\O) \times \H_0^1(\O) &\to \R,  \qquad
M_T(v,w) :=\int_{\O}  v w, \label{MT-cont}\\
A_{F}: \HdoO \times \HdoO &\to \R, \qquad A_{F}(\varphi,\phi) 
:= \int_{\O}\D^2 \varphi:\,\D^2 \phi,\label{AF-cont}\\
A_T: \HuoO \times \HuoO &\to \R, \qquad
A_{T}(v,w) := \int_{\O}\nabla v \cdot \nabla w, \label{AT-cont}		
\end{align}
whereas the convective trilinear forms $B_F(\cdot;\cdot,\cdot)$  and $B_T(\cdot;\cdot,\cdot)$ are defined by
\begin{align}
B_F: \HdoO \times \HdoO \times \HdoO &\to \R, \qquad B_F(\zeta;\varphi,\phi) :=\int_{\O} \Delta \zeta \,   \curl \varphi \cdot \nabla\phi, \label{BF-cont}\\
B_T: \HdoO \times \HuoO \times \HuoO &\to \R, \qquad	B_T(\varphi;v,w) 
:=\int_{\O}  (\curl \varphi \cdot \nabla v )w. \label{BT-cont}
\end{align}
The bilinear form $C(\cdot,\cdot)$ associated to the buoyancy term is given by
\begin{equation}\label{D-cont}
C:  \HuoO \times \HdoO \to \R, \qquad	C(v, \phi) := \int_{\O}  \gb v \cdot \curl \phi   
\end{equation}
and the functionals $F_{\psi}(\cdot)$ and $F_{\theta}(\cdot)$ are give by 
\begin{align}
	F_{\psi}	&:   \HdoO \to \R, \qquad	F_{\psi}(\phi) := \int_{\O}  \fb_{\psi} \cdot \curl \phi, \label{Fstream:cont}\\
	F_{\theta}	&:  \HuoO \to \R, \qquad	F_{\theta}(v) := \int_{\O}  f_{\theta} v.\label{Ftemp:cont}	
\end{align}
We can observe by a direct computation that the trilinear form $B_T(\cdot;\cdot,\cdot)$  defined  in \eqref{BT-cont} is skew-symmetric, i.e.,
\begin{equation*}
B_T(\varphi;v,w) = - B_T(\varphi;w,v) \qquad  \forall \varphi \in  \HdoO \quad \text{and} \quad \forall v,w \in  \H_0^1(\O). 
\end{equation*}
Therefore, the bilinear form $B_T(\cdot;\cdot,\cdot)$ is equal to its skew-symmetric part, defined by 
\begin{equation}\label{BT-cont-skew}
B_{\skew}(\varphi;v,w) := \frac{1}{2}(B_{T}(\varphi;v,w) - B_{T}(\varphi;w,v)) \qquad  \forall \varphi \in  \HdoO \quad \text{and} \quad \forall v,w \in  \H_0^1(\O).
\end{equation}
According with the above discussion, we rewrite system \eqref{coupled-stream1}  in the following equivalent 
formulation: given the initial conditions $(\psi_0,\theta_0) \in  \H_0^1(\O)\times \L^2(\O)$  
and  the forces $\fb_{\psi} \in \L^{2}(0,T;\LL^{2}(\O)),
f_{\theta} \in \L^{2}(0,T;\L^{2}(\O))$ and $\gb \in \L^{\infty}(0,T;\LL^{\infty}(\O))$,
find $(\psi,\theta)\in \L^2(0,T;\H_0^2(\O))  \times\L^2(0,T;\H_0^1(\O))$ such that
\begin{equation}\label{BSE-stream2}
\begin{split}
M_F(\partial_t \psi, \phi) +\nu A_F(\psi,\phi) +B_F(\psi;\psi,\phi)	- C(\theta, \phi) &=F_{\psi}(\phi) \qquad \forall \phi \in \HdoO, \quad \text{for a.e.  $t \in (0,T)$},\\
M_T(\partial_t\theta,v)   + \kappa A_T(\theta,v) +B_{\skew}(\psi;\theta,v)&=F_{\theta}(v)  \:\qquad \forall v \in \HuoO, \quad \text{for a.e.  $t \in (0,T)$},\\
\psi(0)=\psi_0, \quad \qquad  \theta(0)&=\theta_0. 
\end{split}
\end{equation} 

\subsection{Well-posedness of the weak formulation}

In this subsection we recall some basic properties of the continuous forms and 
the existence and uniqueness properties of the solution to problem~\eqref{BSE-stream2}. 
	
\begin{lemma}\label{ACBF-bound}
For all $\zeta,\varphi,\phi \in \H_0^2(\O)$ and for each $v,w \in \H_0^1(\O)$, the forms defined in \eqref{MF-cont}-\eqref{BT-cont-skew} satisfy the following properties:
\begin{align*}
|M_F(\varphi,\phi)| & \leq  \,  C_{M_F}\|\varphi\|_{2,\O} \|\phi\|_{2,\O} \qquad  \text{and} \qquad
M_F(\phi,\phi) \ge |\phi|_{1,\O}^2,\\ 
			|M_T(v,w)| & \leq  \,  C_{M_T}\|v\|_{1,\O} \|w\|_{1,\O} \qquad \text{and} \qquad
			M_T(v,v) \ge \|v\|_{0,\O}^2,\\ 
			|A_F(\varphi,\phi)| & \leq  \,  C_{A_F}\|\varphi\|_{2,\O} \|\phi\|_{2,\O} \qquad \text{and} \qquad
			A_F(\phi,\phi) \ge\alpha_{A_F}\|\phi\|_{2,\O}^2,\\
			|A_T(v,w)| & \leq  \,  C_{A_T}\|v\|_{1,\O} \|w\|_{1,\O} \qquad \text{and} \qquad
			A_T(v,v) \ge \alpha_{A_T}\|v\|_{1,\O}^2,\\
			|B_F(\zeta;\varphi,\phi)| &\leq \CBF \, \|\zeta\|_{2,\O} \|\varphi\|_{2,\O} \|\phi\|_{2,\O} \qquad \text{and} \qquad B(\zeta;\phi,\phi)= 0,\\
			|B_{\skew}(\zeta;v,w)| &\leq \CBT \, \|\zeta\|_{2,\O} \|v\|_{1,\O} \|w\|_{1,\O} 
			\qquad \text{and} \qquad B_{\skew}(\zeta;v,v)= 0,\\
			|C(v,\phi)|&\leq \|\gb\|_{\infty,\O}\|v\|_{0,\O}\|\phi\|_{1,\O}, \quad  |F_{\psi}(\phi)|  \leq  C_{F_{\psi}} \|\fb_{\psi}\|_{0,\O} \|\phi\|_{1,\O}, \quad |F_{\theta}(v)| \leq  C_{F_{\theta}} \|f_{\theta}\|_{0,\O} \|v\|_{0,\O}.
		\end{align*}
\end{lemma}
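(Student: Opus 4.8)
The plan is to verify each inequality in Lemma~\ref{ACBF-bound} by combining the explicit integral definitions \eqref{MF-cont}--\eqref{Ftemp:cont} with standard tools: the Cauchy--Schwarz inequality, H\"older's inequality, the Poincar\'e--Friedrichs inequality on $\HuoO$ and $\HdoO$, the continuous Sobolev embeddings $\H^1(\O)\hookrightarrow \L^4(\O)$ and $\H^2(\O)\hookrightarrow \W_\infty^0(\O)$ valid in two dimensions, and the norm equivalence $\|\varphi\|_{2,\O}\simeq |\varphi|_{2,\O}\simeq \|\D^2\varphi\|_{0,\O}$ on $\HdoO$ coming from Poincar\'e. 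I would organize the proof into the three natural groups: (i) the mass forms $M_F,M_T$; (ii) the diffusion forms $A_F,A_T$; (iii) the trilinear forms $B_F,B_{\skew}$, the buoyancy form $C$, and the loads $F_\psi,F_\theta$.

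First, for $M_T$ the bound $|M_T(v,w)|\le \|v\|_{0,\O}\|w\|_{0,\O}\le C_{M_T}\|v\|_{1,\O}\|w\|_{1,\O}$ is immediate from Cauchy--Schwarz, and coercivity $M_T(v,v)=\|v\|_{0,\O}^2$ is by definition. For $M_F$, since $\curl\varphi=(-\partial_2\varphi,\partial_1\varphi)$ one has $\|\curl\varphi\|_{0,\O}=|\varphi|_{1,\O}\le \|\varphi\|_{2,\O}$, giving continuity with $C_{M_F}=1$ after Cauchy--Schwarz, while $M_F(\phi,\phi)=\|\curl\phi\|_{0,\O}^2=|\phi|_{1,\O}^2$ gives the stated lower bound exactly. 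For $A_F$ and $A_T$, continuity is again Cauchy--Schwarz with $C_{A_F}=C_{A_T}=1$; coercivity of $A_T$ is the Poincar\'e inequality on $\HuoO$ ($\alpha_{A_T}$ depending only on $\O$), and coercivity of $A_F$ follows because $A_F(\phi,\phi)=\|\D^2\phi\|_{0,\O}^2=|\phi|_{2,\O}^2$ controls $\|\phi\|_{2,\O}^2$ by the Poincar\'e-type inequality on $\HdoO$, yielding $\alpha_{A_F}$.

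Next, for the trilinear forms: in $B_F(\zeta;\varphi,\phi)=\int_\O \Delta\zeta\,\curl\varphi\cdot\nabla\phi$ I would apply H\"older with exponents $(2,4,4)$, bounding $\|\Delta\zeta\|_{0,\O}\le |\zeta|_{2,\O}$, then $\|\curl\varphi\|_{\L^4(\O)}\lesssim \|\curl\varphi\|_{1,\O}\lesssim \|\varphi\|_{2,\O}$ and $\|\nabla\phi\|_{\L^4(\O)}\lesssim \|\nabla\phi\|_{1,\O}\lesssim \|\phi\|_{2,\O}$ by the $\H^1\hookrightarrow\L^4$ embedding, producing the constant $\CBF$. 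The skew-symmetry identity $B(\zeta;\phi,\phi)=0$ — here I would note, as the paper does for $B_T$, that an integration by parts using $\div\curl\zeta=0$ and $\curl\zeta\cdot\bn=0$ on $\Gamma$ shows $\int_\O \Delta\zeta\,\curl\varphi\cdot\nabla\phi=-\int_\O\Delta\zeta\,\curl\phi\cdot\nabla\varphi+(\text{lower order})$, and in particular the diagonal term vanishes; alternatively one argues directly that $\curl\varphi\cdot\nabla\varphi=\tfrac12\,\curl\varphi\cdot\nabla(|\nabla\varphi|^2\text{-type})$ is a perfect derivative integrating to zero. For $B_{\skew}$, continuity follows from $|B_T(\zeta;v,w)|=|\int_\O(\curl\zeta\cdot\nabla v)w|\le \|\curl\zeta\|_{\L^4(\O)}\|\nabla v\|_{0,\O}\|w\|_{\L^4(\O)}\lesssim \|\zeta\|_{2,\O}|v|_{1,\O}\|w\|_{1,\O}$ via H\"older $(4,2,4)$ and the same embeddings, and by symmetrizing one gets $\CBT$; the identity $B_{\skew}(\zeta;v,v)=0$ is immediate from the definition \eqref{BT-cont-skew}. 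Finally, $|C(v,\phi)|=|\int_\O \gb v\cdot\curl\phi|\le \|\gb\|_{\infty,\O}\|v\|_{0,\O}\|\curl\phi\|_{0,\O}\le \|\gb\|_{\infty,\O}\|v\|_{0,\O}\|\phi\|_{1,\O}$ by H\"older, and the load bounds $|F_\psi(\phi)|\le \|\fb_\psi\|_{0,\O}\|\curl\phi\|_{0,\O}\le C_{F_\psi}\|\fb_\psi\|_{0,\O}\|\phi\|_{1,\O}$, $|F_\theta(v)|\le C_{F_\theta}\|f_\theta\|_{0,\O}\|v\|_{0,\O}$ are Cauchy--Schwarz.

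I do not expect a genuine obstacle here, as all bounds are classical; the only point requiring a little care is the perfect-derivative / integration-by-parts argument showing $B_F(\zeta;\phi,\phi)=0$, since unlike $B_{\skew}$ the vanishing of the diagonal term is not a formal consequence of the definition but relies on the structure $\curl\phi\cdot\nabla\phi$ together with the homogeneous boundary conditions in $\HdoO$. I would therefore present that computation explicitly, and for the remaining estimates simply invoke Cauchy--Schwarz, H\"older, the Sobolev embeddings, and Poincar\'e, collecting the resulting $\O$-dependent constants under the names used in the statement.
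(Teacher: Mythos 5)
Your proposal is correct, and it is the standard argument: the paper itself states this lemma without proof, simply recalling these bounds as classical consequences of Cauchy--Schwarz, H\"older, the two-dimensional embedding $\H^1(\O)\hookrightarrow\L^4(\O)$, and the Poincar\'e inequalities on $\HuoO$ and $\HdoO$, exactly as you organize them. The one place where your reasoning is imprecise is the identity $B_F(\zeta;\phi,\phi)=0$: no integration by parts, no use of $\div\curl\zeta=0$, and no boundary conditions are needed, because $\curl\phi=(\partial_2\phi,-\partial_1\phi)$ is the pointwise $90^\circ$ rotation of $\nabla\phi$, so the integrand $\Delta\zeta\,(\curl\phi\cdot\nabla\phi)$ vanishes identically (indeed $\curl\varphi\cdot\nabla\phi=-\curl\phi\cdot\nabla\varphi$ pointwise, which gives the full skew-symmetry of $B_F$ in its last two arguments, not merely the vanishing of the diagonal); your alternative description of $\curl\varphi\cdot\nabla\varphi$ as a ``perfect derivative'' is likewise inaccurate, though harmless, since the quantity is simply zero. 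With that computation written as the pointwise orthogonality it is, the proof is complete and matches what the paper takes for granted.
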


The equivalence between the (weak form of) problem \eqref{Bouss:Eq} and its stream formulation \eqref{BSE-stream2} is well known and easy to check. The couple $(\psi,\theta)$ satisfies \eqref{BSE-stream2} if and only if it exists a unique $p$ such that the triple $(\bu, \theta, p)$ in 
$\L^2(0,T;\HH_0^1(\O))  \times \L^2(0,T;\H_0^1(\O)) \times \L^2(0,T;L_0^2(\O))$ 
solves (the variational formulation of) \eqref{Bouss:Eq}, where $\bu = \curl \psi$.
Therefore the following well-posedness results for problem \eqref{BSE-stream2} follow immediately from known results for \eqref{Bouss:Eq}, see~\cite{Morimoto92}.

\begin{theorem}\label{esti:psi:theta}
Problem~\eqref{BSE-stream2} admits a unique solution $(\psi,\theta)$, satisfying
 $\psi \in \L^2(0,T;\H_0^2(\O)) \cap\L^{\infty}(0,T;\H_0^1(\O))$ and $\theta \in \L^2(0,T;\H_0^1(\O)) \cap\L^{\infty}(0,T;\L^2(\O))$.
Furthermore there exists a positive constant $C$, such that
\begin{equation*}
\begin{split}
\|\psi\|_{\L^{\infty}(0,T;\H_0^1(\O))}+ \|\psi\|_{\L^{2}(0,T;\H_0^2(\O))} &+\|\theta \|_{\L^{\infty}(0,T;\L^2(\O))}+ \|\theta\|_{\L^{2}(0,T;\H_0^1(\O))} \\
&\leq 
C\left( \|\fb_{\psi}\|_{\L^{2}(0,T;\LL^2(\O))}
+\|f_{\theta}\|_{\L^{2}(0,T;\L^2(\O))}+\|\theta_0 \|_{0,\O}+|\psi_0|_{1,\O}\right).
\end{split}
\end{equation*}	 
\end{theorem}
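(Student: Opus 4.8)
The plan is to obtain Theorem~\ref{esti:psi:theta} as a direct consequence of the equivalence recorded just above it: a pair $(\psi,\theta)$ solves \eqref{BSE-stream2} if and only if $(\bu,\theta,p)$ with $\bu=\curl\psi$ solves the standard weak velocity--pressure--temperature formulation of \eqref{Bouss:Eq}. First I would make the correspondence $\psi\leftrightarrow\bu=\curl\psi$ quantitative. Since $\psi\in\HdoO$ we have $\|\curl\psi\|_{0,\O}=\|\nabla\psi\|_{0,\O}=|\psi|_{1,\O}$ and $\|\nabla(\curl\psi)\|_{0,\O}=|\psi|_{2,\O}$, and because $\psi$ and $\nabla\psi$ vanish on $\Gamma$, the Poincar\'e inequality gives $\|\curl\psi\|_{1,\O}\simeq|\psi|_{2,\O}\simeq\|\psi\|_{2,\O}$; moreover the normal trace of $\curl\psi$ on $\Gamma$ is the tangential derivative of $\psi|_\Gamma=0$, so $\bu_0:=\curl\psi_0$ belongs to the usual divergence-free $\L^2$ space of admissible initial velocities whenever $\psi_0\in\HuoO$. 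Under this correspondence the forms \eqref{MF-cont}--\eqref{Ftemp:cont} are exactly the integrated-by-parts versions of the bilinear and trilinear forms of the weak Boussinesq system (with $M_F$, $A_F$, $B_F$ corresponding respectively to $(\partial_t\bu,\cdot)$, the viscous term and $((\bu\cdot\nabla)\bu,\cdot)$ after the divergence-free test function has eliminated $\nabla p$), so the two formulations are genuinely the same problem.

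Second, I would invoke the classical global well-posedness theory for the two-dimensional nonstationary Boussinesq system (see \cite{Morimoto92}): for $\fb_\psi\in\L^2(0,T;\LL^2(\O))$, $f_\theta\in\L^2(0,T;\L^2(\O))$, $\gb\in\L^\infty(0,T;\LL^\infty(\O))$ and initial data $\bu_0=\curl\psi_0$, $\theta_0\in\L^2(\O)$, there is a unique weak solution $(\bu,\theta,p)$ with $\bu\in\L^2(0,T;\HHuoO)\cap\L^\infty(0,T;\LL^2(\O))$, $\theta\in\L^2(0,T;\HuoO)\cap\L^\infty(0,T;\L^2(\O))$, $p\in\L^2(0,T;\L^2(\O))$ with $(p,1)_{0,\O}=0$, satisfying the standard energy estimate bounding $\|\bu\|_{\L^\infty(\LL^2)}+\|\bu\|_{\L^2(\HHuoO)}+\|\theta\|_{\L^\infty(\L^2)}+\|\theta\|_{\L^2(\HuoO)}$ by $C\big(\|\fb_\psi\|_{\L^2(\LL^2)}+\|f_\theta\|_{\L^2(\L^2)}+\|\theta_0\|_{0,\O}+\|\bu_0\|_{0,\O}\big)$. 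Reading this back through $\bu=\curl\psi$ and the equivalences $\|\bu\|_{0,\O}=|\psi|_{1,\O}$, $\|\bu\|_{1,\O}\simeq\|\psi\|_{2,\O}$, $\|\bu_0\|_{0,\O}=|\psi_0|_{1,\O}$ gives precisely the regularity classes and the a priori bound claimed for $(\psi,\theta)$, and uniqueness for $(\psi,\theta)$ is immediate from uniqueness of $(\bu,\theta,p)$ and bijectivity of the correspondence.

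If instead one wished a self-contained proof working directly with \eqref{BSE-stream2}, the natural route is a Galerkin scheme in the $\HdoO\times\HuoO$ setting. Testing the first equation with $\phi=\psi$ and the second with $v=\theta$, the properties $B_F(\psi;\psi,\psi)=0$ and $B_{\skew}(\psi;\theta,\theta)=0$ from Lemma~\ref{ACBF-bound} remove the nonlinear contributions; the $\theta$-estimate then decouples and, using $A_T$-coercivity, Young's inequality and Gr\"onwall, one bounds $\theta$ in $\L^\infty(0,T;\L^2(\O))\cap\L^2(0,T;\HuoO)$ by the data, after which $|C(\theta,\psi)|\le\|\gb\|_{\infty,\O}\|\theta\|_{0,\O}\|\psi\|_{1,\O}$ together with $A_F$-coercivity closes the estimate for $\psi$ in $\L^\infty(0,T;\HuoO)\cap\L^2(0,T;\HdoO)$. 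Existence follows by passing to the limit in the finite-dimensional problems, and uniqueness, in two dimensions, by an energy estimate on the difference of two solutions and Gr\"onwall. The genuinely delicate step in this alternative is the passage to the limit in the convective term $B_F(\psi_m;\psi_m,\phi)=\int_\O\Delta\psi_m\,\curl\psi_m\cdot\nabla\phi$, which involves second derivatives of $\psi_m$: one needs an Aubin--Lions-type compactness argument (uniform bounds on $\partial_t\psi_m$ and $\partial_t\theta_m$ in appropriate dual spaces) to upgrade the $\L^2(0,T;\HdoO)$ bound on $\psi_m$ to strong convergence of $\curl\psi_m$ in $\L^2(0,T;\HHuoO)$, which is what makes the product $\Delta\psi_m\,\curl\psi_m$ converge. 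In both approaches the essential structural fact is that the spatial dimension is two, which renders the convective terms subcritical and yields global-in-time existence and uniqueness.
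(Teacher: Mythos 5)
Your proposal follows the same route as the paper: the paper proves this theorem simply by invoking the equivalence $\bu=\curl\psi$ between \eqref{BSE-stream2} and the weak velocity--pressure--temperature form of \eqref{Bouss:Eq} and then citing the known well-posedness theory of \cite{Morimoto92}, which is exactly your first two paragraphs (made somewhat more quantitative). Your additional sketch of a direct Galerkin argument is a correct but unnecessary supplement; the core argument matches the paper's.
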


We close this section by recalling a useful Sobolev inequality (\cite[Lemma 2.2]{AHMS2018}), needed in the sequel:
\begin{equation}\label{sobolev:ineq}
	\|\bv\|_{\L^4(\O)} \leq 2^{\frac{1}{4}} \|\bv\|^{\frac{1}{2}}_{1,\O}\|\bv\|^{\frac{1}{2}}_{0,\O} 
	\qquad \forall \bv \in  \HH^1_0(\O).
\end{equation}

 \setcounter{equation}{0}
\section{Virtual elements discretization}\label{VEM:section}
In this section we will introduce $C^1$- and $C^0$-conforming schemes
of arbitrary order $k \geq 2$ and $\ell\geq 1$, 
for the numerical approximation of the stream-function and temperature 
unknowns of problem~\eqref{BSE-stream2}, respectively.
First, we start by introducing some mesh notations  
together with the respective local and global virtual spaces and their 
degrees of freedom. Moreover, we introduce the classical VEM polynomial 
projections and present the discrete multilinear forms. 
\subsection{Polygonal decompositions and  notations}
\label{meshassump}
Henceforth, we will denote by $\E$ a general polygon, $e$ a general edge of $\partial\E$, $h_\E$ 
the diameter of the element  $\E$ and by $h_e$ the length of edge. 
Let $\{\CT_h\}_{h>0}$ be a sequence of decompositions of $\O$ into non-overlapping polygons $\E$, 
where $h:=\max_{\E\in\CT_h}h_\E$. 

Moreover,  $N_\E$ denotes the number of vertices of 
$\E$  and we define the unit normal vector $\bn_{\E}$, that points outside of $\E$  
and the unit tangent vector $\bt_{\E}$ to $\E$ obtained by a counterclockwise rotation of $\bn_{\E}$. Furthermore, for each $\E$ and any integer $n\geq 0$, we introduce the following spaces:
	\begin{itemize}
\item For every open bounded subdomain $\mathcal{D} \subset\R^2$ we define  
$\P_{n}(\mathcal{D})$ as the space of polynomials on $\mathcal{D}$  of degree up to 
$n$ and we denote by $\PP_{n}(\calD)$ 
its vectorial version, i.e.,  $\PP_{n}(\calD):=[\P_{n}(\calD)]^2$; 
\item  We define the discontinuous piecewise $n$-order polynomial by 
\begin{equation*}
\P_{n}(\O_h):= \big\{ q \in \L^2(\O): q|_{\E} \in \P_{n}(\E)  \quad \forall \E \in \O_h \big\}.
\end{equation*}
\end{itemize}

Besides, for $s>0$, we consider the broken spaces
	\begin{equation*}
		\H^{s}(\O_h):= \big\{ \phi \in \L^2(\O): \phi|_{\E} \in \H^{s}(\E)  \quad \forall \E \in \O_h \big\} 
	\end{equation*}
endowed with the following broken seminorm: $|\phi|_{s,h}:=\Big(\sum_{\E\in\CT_h}|\phi|_{s,\E}^{2}\Big)^{1/2}$.

For the theoretical convergence analysis, we suppose that for all $h$, each element $\E$ in the mesh family $\{\CT_h \}_{h>0}$ satisfies the following assumptions~\cite{BBCMMR2013,ChM-camwa} for a uniform constant $\rho>0$:	
\begin{itemize}
\item[${\bf A1}:$] $\E$ is star-shaped with respect to every point of a  ball
		of radius  greater or equal to $\rho h_\E$;
		\item[${\bf A2}:$] every  edge $e \subset \partial \E$ has the length greater or equal to $ \rho h_\E$.
	\end{itemize}

\subsection{Virtual element space for the stream-function}
\label{stream:spaces}
In the present section we introduce  a virtual space of order $k$ used to approximate the stream-function unknown.
 
For each polygon $\E\in\O_h$  and every integer $k\ge 2$, let $\widehat{k}:=\max\{k,3\}$ 
and $\WtK$  be the finite  dimensional space introduced  in~\cite{ChM-camwa}:
\begin{align*}
\WtK:=\left\{\phi_h\in \HdoK : \Delta^2\phi_h\in\P_{k-2}(\E), \phi_h|_{\partial\E}\in C^0(\partial\E),
		\phi_h|_e\in\P_{\widehat{k}}(e)\,\,\forall e\in\partial\E,\right.\\
		\left.\nabla \phi_h|_{\partial\E}\in \Cb^0(\partial\E),
		\partial_{\nb^e_{\E}}\phi_h \in\P_{k-1}(e)\,\,\forall e\in\partial\E\right\}.
	\end{align*}
Next, for  $\phi_{h}\in\WtK$, we introduce the following set of linear operators:
\begin{itemize}
		\item $\DXu:$ the values of $\phi_{h}(\vb_i)$, for all  vertex $\vb_i$ of the polygon $\E$;
		\item $\DXd:$ the values of $h_{\vb_i}\nabla\phi_{h}(\vb_i)$, for all  vertex $\vb_i$ of the polygon $\E$;
		\item $\DXt:$ for $k \geq 3$, the  moments on edges   up to degree $k-3$:
		$$
		\quad (q, \partial_{\nb^e_{\E}}\phi_h)_{0,e}  \qquad \forall q \in \M_{k-3}(e), 
		\quad \forall \, \text{edge} \, \, e;
		$$
		\item $\DXc:$ for $\widehat{k}\geq 4$, the moments on edges  up to degree $\widehat{k}-4$: 
		$$h^{-1}_e(q,\phi_h)_{0,e} \qquad \qquad \forall q \in \M_{\widehat{k}-4}(e), 
		\quad \forall \, \text{edge} \, \, e;$$
		
		\item $\DXf:$ for $k \geq 4$, the moments on polygons   up to degree $k-4$:
		$$ \qquad h^{-2}_\E(q,  \phi_h)_{0,\E}\,
		\qquad \qquad \forall q \in \M_{k-4}(\E), 
		\quad \forall \, \text{polygon} \, \, \E,$$
\end{itemize}
where for each vertex $\vb_i$, we chose $h_{\vb_i}$ as the average of the diameters of the elements having $\vb_i$ 
as a vertex and $\M_{n}(\E)$ denote the scaled monomials of degree $n$, for each $n\geq0$ (for further details see~\cite{BM13}). 

In order to construct  an approximation for the bilinear form  $A_F(\cdot,\cdot)$, we define the operator
$
{\tt P_0}: C^{0}(\partial \E) \longrightarrow \P_0(\E)
$
defined by 
	as the following average: 
	\begin{equation}\label{average}
		{\tt P_0}\phi_h = \frac{1}{N_\E} \sum_{i=1}^{N_\E} \phi_h (\vb_i),
	\end{equation}
	where $\vb_i, 1 \leq i \leq N_\E$, are the vertices of  $\E$. Then, for each polygon $\E$,  we define the  projector: 
	\begin{equation*}
		\PiK:\WtK  \longrightarrow \P_k(\E)\subseteq\WtK,
	\end{equation*}
	as the solution of the local problems:
	\begin{align*}
		A^{\E}_{F}(\phi_h- \PiK \phi_h&, q_k)  = 0 \qquad  \forall q_k \in \P_{k}(\E), \\
		{\tt P_0}(\phi_h-\PiK \phi_h)&=0, \quad 
		{\tt P_0}(\nabla (\phi -\PiK \phi_h))=0, 
	\end{align*}
where $A^{\E}_{F}(\cdot,\cdot)$ is the restriction of the  global bilinear form 
$A_{F}(\cdot,\cdot)$ (cf. \eqref{AF-cont}) on each polygon $\E$.
  	
	\begin{remark}\label{lemma-ProyDelta}
		The operator $\PiK :\WtK \to \P_{k}(\E)$ is explicitly
		computable for every $\phi_h \in \WtK$, using only the
		information of the linear operators $\DXu-\DXf$; see for instance \cite{ChM-camwa,MRS2021-IMAJNA}.	
	\end{remark} 
	
Now, we will present the local stream-function virtual space. 
For any $\E \in \O_h$ and  each integer $k\ge 2$, we consider 
the following local enhanced virtual space
	\begin{align}\label{localspace}
		\WK 
		:= \left \{ \phi_h \in \WtK : 
		(q^{\ast} \,, \phi_h-\PiK \phi_h)_{0,\E}=0 \quad \forall  q^{\ast} \in \M^{\ast}_{k-3}(\E) \cup \M^{\ast}_{k-2}(\E)\right \},
	\end{align}
	where $\M^{\ast}_{k-3}(\E)$ and $\M^{\ast}_{k-2}(\E)$ are scaled monomials of degree $k-3$ 
	and $k-2$, respectively  (see \cite{AABMR13}),  with  the convention that 
	$\M^{\ast}_{-1}(\E):=\emptyset$.  For further details,
	see  for instance \cite{ChM-camwa} (see also \cite{BM13,ABSV2016,MRS2021-IMAJNA}).	
	
For $k\ge2$, we  introduce an additional projector,
which will be used to build an approximation of the bilinear form $M_F(\cdot,\cdot)$.
Such projector $\PicurlK:\WtK \longrightarrow \P_k(\E)\subseteq\WtK$ is defined
as the solution of the local problems:
\begin{align*}
	M^{\E}_{F}(\phi_h- \PicurlK \phi_h, q_k)  &= 0 \qquad  \forall q_k \in \P_{k}(\E), \\
	{\tt P_0}(\nabla (\phi_h -\PicurlK \phi_h))&=0,
\end{align*}
where $M^{\E}_{F}(\cdot,\cdot)$ is the restriction of the  global bilinear form 
$M_{F}(\cdot,\cdot)$ (cf. \eqref{MF-cont}) on each polygon $\E$.

We summarize the main properties of
the local virtual space $\WK$ defined in \eqref{localspace}
(for the proof, we refer to \cite{AABMR13,BM13,ChM-camwa,MRS2021-IMAJNA}). 

	\begin{itemize}
		\item $\P_k(\E) \subset  \WK \subset \WtK$;
		\item The sets of linear operators $\DXu-\DXf$ constitutes a set of degrees of freedom for $\WK$;
		\item The operators $\PiK :\WK \to \P_{k}(\E)$ and  $\PicurlK :\WK \to \P_{k}(\E)$ are 
		computable using only the degrees of freedom $\DXu-\DXf$.
	\end{itemize}

Now, we present our global virtual space to approximate the stream-function of 
the Boussinesq system~\eqref{BSE-stream2}. For each decomposition $\O_h$ of 
$\O$ into simple polygons $\E$, we define 
\begin{equation*}
	\Wh:=\left\{ \phi_h \in \HdoO: \phi_h|_{\E} \in \WK \quad \forall \E \in \O_h \right\}. 
\end{equation*}

\subsection{Virtual element space for the temperature}
\label{temp:spaces}	
In this subsection we will introduce a $C^0$-virtual element space of high order $\ell\geq 1$ 
to approximate the temperature field of problem \eqref{BSE-stream2}. 
To this end, for each polygon $\E \in \CT_h$, we consider the following finite 
dimensional space (see \cite{AABMR13,BBMRm3as2016,CMS2016}):  
\begin{align*}
\HtE:= \left\{ w_h \in \H^1(\E) \cap C^{0}(\partial \E) : {\Delta w_h} \in \P_{\ell}(\E), \, \, w_h|_{e}
\in \P_{\ell}(e) \quad \forall e \subset \partial \E  \right\}.
\end{align*}
For each $q_h \in \HtE$ we consider the following set of linear operators:
\begin{itemize}
\item  $\DHu:$  the values of   $w_h(\vb_i)$, for all vertex $\vb_i$ of the polygon $\E$. 
\item  $\DHd:$ for $\ell \geq 2$, the moments on edges up to degree $\ell-2$:  
\begin{equation*}
h^{-1}_e(w_h\, p_{\ell})_{0,e}  \qquad  \forall p_{\ell} \in \M_{\ell-2}(e), 	\quad \forall \, \text{edge} \, \, e;
\end{equation*} 
\item  $\DHt:$ for $\ell\geq 2$, the moments on element $\E$ up to degree $\ell-2$: 
\begin{equation*}
h^{-2}_{\E}(w_h\, p_{\ell})_{0,\E}  \qquad  \forall p_{\ell} \in \M_{\ell-2}(E), \quad \forall \, \text{polygon} \, \, \E,
		\end{equation*}
	\end{itemize}
where $\M_{n}(\E)$ denote the scaled monomials of degree $n$, for each $n\geq0$ (for further details see~\cite{AABMR13,CMS2016}).	
	Now, we define the projector $\PinablaK:\HtE \to\P_{\ell}(\E)\subseteq \HtE$,
	 as the solution of the local problems
	\begin{align*}
		A_T^{E}(w_h-\PinablaK  w_h,r_{\ell}) &=0 
		\qquad\forall r_{\ell}\in\P_{\ell}(\E),\\
		{\tt P}_0(\PinablaK w_h-w_h)&= 0,
	\end{align*}
where $A^{\E}_{T}(\cdot,\cdot)$ is the restriction of the  global bilinear form 
$A_{T}(\cdot,\cdot)$ (cf. \eqref{AT-cont}) on each polygon $\E$ and
the operator ${\tt P}_0(\cdot)$ is defined in \eqref{average}. 
We have that the operator $\PinablaK:\HtE \to\P_{\ell}(\E)$ is computable 
using the set $\DHu-\DHt$ (see for instance, \cite{AABMR13,BBMRm3as2016,CMS2016}). 
In addition,  by using this projection and the definition of space $\HtE$, 
we introduce our local virtual space to approximate the temperature field: 
\begin{equation*}
\HE:= \left\{  w_h \in \HtE:   (r^{\ast},w_h-\PinablaK w_h)_{0,\E}= 0 \qquad \forall r^{\ast} \in  \M^{\ast}_{\ell}(\E) \cup \M^{\ast}_{\ell-1}(\E) \right\},
\end{equation*}
where $\M^{\ast}_{\ell}(\E)$ and $\M^{\ast}_{\ell-1}(\E)$ are scaled monomials of degree $\ell$ and $\ell-1$, respectively,  with  the convention that $\M^{\ast}_{-1}(\E):=\emptyset$ (see \cite{AABMR13,CMS2016}).
	
Now, we summarize the main properties of the local virtual spaces $\HE$	(for a proof we refer  to \cite{AABMR13,BBMRm3as2016,CMS2016}):
	\begin{itemize}
		\item $\P_{\ell}(\E) \subset \HE \subset \HtE$;
		\item The sets of linear operators $\DHu-\DHt$ constitutes a set of degrees of freedom
		for $\HE$;
		\item The operator $\PinablaK :\HE \to \P_{\ell}(\E)$ is also 
		computable using the degrees of freedom $\DHu-\DHt$.
	\end{itemize}
	
Next, we present our global virtual space to approximate the  fluid temperature of the Boussinesq system~\eqref{BSE-stream2}.
For each decomposition $\O_h$ of $\O$ into simple polygons $\E$, we define 
	\begin{equation*}
		\Hh:=\left\{ w_h \in \HuoO: w_h|_{\E} \in \HE \quad \forall \E \in \O_h \right\}. 
	\end{equation*}

\subsection{$\L^2$-projections and the discrete forms}
\label{projectors:dicrete:form}
	
In this subsection we introduce some functions built from the classical 
$\L^2$-polynomial projections,  which  will be useful to construct an 
approximation of the continuous multilinear forms defined in Section~\ref{weak_form_section}.
We start recalling  the usual $\L^2(\E)$-projection onto the scalar polynomial space 
	$\P_{n}(\E)$, with $n \in \N \cup \{0\}$: 
	for each $\phi \in \L^2(\E)$, the function $\Pi_{\E}^{n} \phi \in \P_{n}(\E)$ 
	is defined as the unique function, such that 
	\begin{equation}\label{proymdos}
		\left( q_{n}, \phi-\Pi_{\E}^{n} \phi \right)_{0,\E}=0 
		\qquad\forall q_{n}\in\P_{n}(\E).
	\end{equation}

An analogous definition holds for the  $\LL^2(\E)$-projection onto the vectorial polynomial space 
$\PP_{n}(\E)$,  which we will denote by $\boldsymbol{\Pi}_{\E}^{n}$.  

The following lemma establishes that certain polynomial projections  
are computable on $\WK$, using only the information of the 
degrees of freedom $\DXu-\DXf$ (see for instance \cite{ChM-camwa,MRS2021-IMAJNA}).
	
\begin{lemma}\label{lemm-PiK-stream}
For $k \geq 2$, let $\PioK:\L^2(\E)\to\P_{k-2}(\E)$ and $\PimunoK: \LL^2(\E)\to \PP_{k-1}(\E)$ be the operators defined
by the relation \eqref{proymdos} and  by its vectorial version. 
Then,  for each $\phi_h \in \WK$ the polynomial functions
\[
\PioK \phi_h, \quad \: 
\PioK \Delta \phi_h, \quad \:  \PimunoK\nabla \phi_h \quad \text{and} \quad \PimunoK\curl \phi_h
\]
are computable using only the information of the degrees of freedom $\DXu-\DXf$.
\end{lemma}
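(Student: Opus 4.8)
The four polynomials in the statement are each completely determined by the moments that define them, so the plan is to show that every such moment can be evaluated from the degrees of freedom $\DXu$--$\DXf$ only. Indeed $\PioK\phi_h\in\P_{k-2}(\E)$ is fixed by the numbers $(\phi_h,q)_{0,\E}$ for $q\in\P_{k-2}(\E)$; $\PioK\Delta\phi_h$ by $(\Delta\phi_h,q)_{0,\E}$ for $q\in\P_{k-2}(\E)$; and $\PimunoK\nabla\phi_h$, $\PimunoK\curl\phi_h$ by $(\nabla\phi_h,\qb)_{0,\E}$, $(\curl\phi_h,\qb)_{0,\E}$ for $\qb\in\PP_{k-1}(\E)$. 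I will reduce each of these to three computable ingredients: the interior moments of $\phi_h$ of degree $\le k-4$, which are the degrees of freedom $\DXf$ up to the fixed scaling $h_\E^{-2}$; the interior moments of $\phi_h$ of degree $k-3$ and $k-2$, which by the enhancement constraint in \eqref{localspace} equal $(\PiK\phi_h,q^{\ast})_{0,\E}$ for $q^{\ast}\in\M^{\ast}_{k-3}(\E)\cup\M^{\ast}_{k-2}(\E)$ and hence are integrals of two known polynomials, since $\PiK\phi_h$ is computable from $\DXu$--$\DXf$ by Remark~\ref{lemma-ProyDelta}; and boundary integrals over $\partial\E$ of $\phi_h$ and of $\dn\phi_h$ against polynomial data.

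First I would show that the two boundary traces $\phi_h|_e$ and $\dn\phi_h|_e$ are recoverable edge by edge. Fix an edge $e$ with endpoints $\vb_a,\vb_b$. Because $\nabla\phi_h|_{\partial\E}\in\Cb^0(\partial\E)$, the gradient has single-valued nodal values, so $\DXd$ yields $\nabla\phi_h(\vb_a)$, $\nabla\phi_h(\vb_b)$ and therefore the endpoint tangential derivatives $\partial_{\bt}\phi_h$ and normal derivatives $\dn\phi_h$. Recalling from the definition of $\WtK$ that $\phi_h|_e\in\P_{\widehat{k}}(e)$, the functionals
\[
\phi_h(\vb_a),\ \phi_h(\vb_b),\ \partial_{\bt}\phi_h(\vb_a),\ \partial_{\bt}\phi_h(\vb_b),\qquad (\phi_h,q)_{0,e}\ \ \forall\,q\in\M_{\widehat{k}-4}(e)
\]
number $\widehat{k}+1=\dim\P_{\widehat{k}}(e)$, and a standard Hermite-type unisolvence argument shows that $\phi_h|_e$ is determined by $\DXu$, $\DXd$, $\DXc$. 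Similarly $\dn\phi_h|_e\in\P_{k-1}(e)$ is fixed by its two endpoint values together with the $k-2$ edge moments in $\DXt$, which number $k=\dim\P_{k-1}(e)$. Hence for any polynomial $r$ on $\partial\E$ the quantities $\sum_{e\subset\partial\E}(\phi_h,r)_{0,e}$ and $\sum_{e\subset\partial\E}(\dn\phi_h,r)_{0,e}$ are computable.

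With these ingredients the four projections follow by integration by parts. For $\PioK\phi_h$, split $\P_{k-2}(\E)=\P_{k-4}(\E)\oplus\M^{\ast}_{k-3}(\E)\oplus\M^{\ast}_{k-2}(\E)$ and use $\DXf$ on the first summand and the enhancement identity on the other two. For $\PioK\Delta\phi_h$, integrating by parts twice on $q\in\P_{k-2}(\E)$ gives
\[
(\Delta\phi_h,q)_{0,\E}=(\phi_h,\Delta q)_{0,\E}+\sum_{e\subset\partial\E}\big[(\dn\phi_h,q)_{0,e}-(\phi_h,\dn q)_{0,e}\big],
\]
where $\Delta q\in\P_{k-4}(\E)$ makes the volume term an $\DXf$ moment and the boundary sums are handled by the previous step. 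For $\PimunoK\nabla\phi_h$, one integration by parts on $\qb\in\PP_{k-1}(\E)$ gives
\[
(\nabla\phi_h,\qb)_{0,\E}=-(\phi_h,\div\qb)_{0,\E}+\sum_{e\subset\partial\E}(\phi_h,\qb\cdot\bn_\E)_{0,e},
\]
and $\div\qb\in\P_{k-2}(\E)$ makes the volume term known from $\PioK\phi_h$. Finally, $\curl\phi_h$ is a fixed $90^{\circ}$ rotation of $\nabla\phi_h$ and $\PP_{k-1}(\E)$ is invariant under that rotation, so $\PimunoK\curl\phi_h$ equals the same rotation applied to $\PimunoK\nabla\phi_h$ (equivalently, integrate by parts via $(\curl\phi_h,\qb)_{0,\E}=(\phi_h,\rot\qb)_{0,\E}+\text{boundary}$, with $\rot\qb\in\P_{k-2}(\E)$).

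The integrations by parts and the splitting of $\P_{k-2}(\E)$ are routine; the point that needs genuine care is the second step, that is, the unisolvence of the boundary degrees of freedom for the edge polynomials $\phi_h|_e\in\P_{\widehat{k}}(e)$ and $\dn\phi_h|_e\in\P_{k-1}(e)$ and the bookkeeping for the low-order cases $k=2,3$, where the moment sets $\DXt$, $\DXc$, $\DXf$ are partly or entirely empty and the traces reduce to interpolation by the endpoint data alone. This is precisely the construction of \cite{BM13,ChM-camwa,MRS2021-IMAJNA}, to which the remaining details can be referred.
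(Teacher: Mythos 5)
Your proof is correct, and it is essentially the standard argument that the paper delegates to the cited references \cite{ChM-camwa,MRS2021-IMAJNA} rather than writing out: recover the edge traces of $\phi_h$ and $\dn\phi_h$ from $\DXu$--$\DXc$ by the Hermite-type unisolvence count, obtain all interior moments of $\phi_h$ up to degree $k-2$ from $\DXf$ together with the enhancement constraint in \eqref{localspace} and the computability of $\PiK$, and then integrate by parts. The dimension counts ($\widehat{k}+1$ for $\phi_h|_e\in\P_{\widehat{k}}(e)$ and $k$ for $\dn\phi_h|_e\in\P_{k-1}(e)$) and the splitting of $\P_{k-2}(\E)$ are all accurate, including the degenerate cases $k=2,3$.
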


For the space $\HE$  and its degrees of freedom $\DHu-\DHt$, we have the following result  
(see for instance \cite{BBMRm3as2016,CMS2016}).
\begin{lemma}\label{lemm-PiK-temp}
For $\ell \geq 1$, let $\Pi_{\E}^{\ell-1}:\L^2(\E)\to\P_{\ell-1}(\E)$, $\Pi_{\E}^{\ell}:\L^2(\E)\to\P_{\ell}(\E)$ and $\PiellK: \LL^2(\E)\to \PP_{\ell-1}(\E)$ 
be the operators defined by the relation \eqref{proymdos} and  by its vectorial version, respectively. 
Then,  for each $w_h \in \HE$ the polynomial functions
\[\Pi_{\E}^{\ell-1} w_h, \quad \: \Pi_{\E}^{\ell} w_h \quad \:  \text{and} \quad \PiellK\nabla w_h
\]
are computable using only the information of the degrees of freedom $\DHu-\DHt$.
\end{lemma}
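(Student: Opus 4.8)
The statement to prove (Lemma \ref{lemm-PiK-temp}) asserts computability of $\Pi_{\E}^{\ell-1} w_h$, $\Pi_{\E}^{\ell} w_h$ and $\PiellK\nabla w_h$ from the degrees of freedom $\DHu$–$\DHt$, for $w_h \in \HE$. The plan is to treat the three projections in increasing order of difficulty, in each case reducing the defining $\L^2$-orthogonality relation \eqref{proymdos} to integrals that can be evaluated directly from the dofs.

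First I would handle $\Pi_{\E}^{\ell-1} w_h$. By definition $\Pi_{\E}^{\ell-1} w_h$ is determined by the quantities $(q_{\ell-1}, w_h)_{0,\E}$ for all $q_{\ell-1}\in\P_{\ell-1}(\E)$. Writing $q_{\ell-1}$ in the scaled-monomial basis $\M_n(\E)$, each such integral is, up to the scaling factor $h_\E^{2}$, exactly one of the volume moments in $\DHt$ (for the monomials of degree $\le \ell-2$) together with the degree-$(\ell-1)$ scaled monomials, which by the enhancement condition defining $\HE$ satisfy $(r^\ast, w_h)_{0,\E} = (r^\ast, \PinablaK w_h)_{0,\E}$ for $r^\ast \in \M^\ast_{\ell-1}(\E)$; the right-hand side is computable since $\PinablaK w_h$ is a known polynomial (computable from $\DHu$–$\DHt$, as recalled just above the lemma). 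Hence all the needed moments are available and $\Pi_{\E}^{\ell-1} w_h$ is computable. The same argument, now also invoking the enhancement relation for $r^\ast \in \M^\ast_{\ell}(\E)$, gives $\Pi_{\E}^{\ell} w_h$: the only moments of $w_h$ beyond those in $\DHt$ are against degree $\ell-1$ and $\ell$ scaled monomials, and both are reduced to moments of $\PinablaK w_h$ by the definition of $\HE$.

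For $\PiellK\nabla w_h$ I would use integration by parts: for every $\qb \in \PP_{\ell-1}(\E)$,
\[
\int_\E \nabla w_h \cdot \qb = -\int_\E w_h\,\divt\qb + \int_{\partial\E} w_h\,(\qb\cdot\bn_\E).
\]
The volume term $\int_\E w_h\,\divt\qb$ involves $w_h$ tested against a polynomial of degree $\le \ell-2$, hence is computable from $\DHt$ (and, if $\ell=1$, it simply vanishes). The boundary term is an edge integral of $w_h$, a polynomial of degree $\ell$ on each edge $e$, against $\qb\cdot\bn_\E \in \P_{\ell-1}(e)$; this is computable because the trace of $w_h$ on each edge is determined by the vertex values $\DHu$ together with the edge moments $\DHd$ up to degree $\ell-2$, which is exactly the data needed to reconstruct a degree-$\ell$ polynomial on $e$. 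Combining, $\PiellK\nabla w_h$ is computable from $\DHu$–$\DHt$.

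The only genuinely delicate point is the bookkeeping for the top-degree moments in the two scalar projections: a priori $\Pi_{\E}^{\ell} w_h$ requires $(r^\ast,w_h)_{0,\E}$ for $r^\ast$ of degree exactly $\ell$, which is \emph{not} among $\DHt$ — this is precisely why the enhanced space $\HE$ (rather than $\HtE$) must be used, and the heart of the proof is the observation that the enhancement constraints $(r^\ast,w_h-\PinablaK w_h)_{0,\E}=0$ convert these missing moments into computable moments of the known polynomial $\PinablaK w_h$. Everything else is a direct unwinding of \eqref{proymdos} together with the characterization of the dofs; full details follow the standard VEM arguments in \cite{AABMR13,BBMRm3as2016,CMS2016}.
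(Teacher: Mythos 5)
Your proof is correct and follows exactly the standard argument that the paper delegates to the cited references \cite{AABMR13,BBMRm3as2016,CMS2016}: the low-order volume moments come from $\DHt$, the degree-$(\ell-1)$ and degree-$\ell$ moments come from the enhancement constraints defining $\HE$ together with the computability of $\PinablaK w_h$, and $\PiellK\nabla w_h$ is recovered by integration by parts using the boundary trace determined by $\DHu$ and $\DHd$. Nothing to add.
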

		
Now, using the functions introduced above, we will construct the discrete version
of the forms defined in Section~\ref{weak_form_section}. 
First, let  $s^{\bc}_{\E}: \WK \times \WK \to \R$ and $s^{\D}_{\E} : \WK \times \WK \to \R$
 be any symmetric positive definite 
bilinear forms to be chosen  to satisfy:
	\begin{equation}\label{term:stab:SK}
\begin{split}
c_0 M_F^{\E}(\phi_h,\phi_h)&\leq s_{\E}^{\bc}(\phi_h,\phi_h)\leq c_1 M_F^{\E}(\phi_h,\phi_h)\qquad \quad
\forall \phi_h \in  \mathrm{Ker}(\PicurlK),\\
c_2 A_F^{\E}(\phi_h,\phi_h)&\leq s^{\D}_{\E}(\phi_h,\phi_h)\leq c_3 A_F^{\E}(\phi_h,\phi_h)\qquad \quad \:
\forall \phi_h \in  \mathrm{Ker}(\PiK),
\end{split}
\end{equation}
with $c_0,c_1,c_2$ and $c_3$ are positive constants independent
of $h$ and $\E$. We will choose the following representation satisfying \eqref{term:stab:SK} 
 (see \cite[Proposition 3.5]{MRS2021-IMAJNA}):
	\begin{equation*}
s^{\D}_{\E}(\varphi_h,\phi_h):= h^{-2}_{\E} \sum_{i=1}^{N_\E^{\dof}}\dof^{\WK}_i(\varphi_h)\dof^{\WK}_i(\phi_h) \quad
\text{and} \quad 
s^{\bc}_{\E}(\varphi_h,\phi_h):=  \sum_{i=1}^{N_\E^{\dof}} \dof^{\WK}_i(\varphi_h)\dof^{\WK}_i(\phi_h),
	\end{equation*}
where $N_\E^{\dof}:=\dim(\WK)$ and the operator $\dof^{\WK}_j(\phi)$ associates to each  smooth enough function 
$\phi$  the $j$th local degree of freedom $\dof^{\WK}_j(\phi)$, with $1 \leq j \leq N_\E^{\dof}$.

On each polygon $\E$, we define the local discrete bilinear forms $M_{F}^{h,\E}(\cdot, \cdot)$ and 
$A_{F}^{h,\E}(\cdot,\cdot)$ as follows
\begin{align}	
M_{F}^{h,\E}(\varphi_h,\phi_h)&:=
M_{F}^{\E} \left(\PicurlK \varphi_h,\PicurlK \phi_h\right) +s^{\bc}_{\E}\big(({\rm I}-\PicurlK) \varphi_h,({\rm I}-\PicurlK) \phi_h\big)  \qquad \quad\forall  \varphi_h,\phi_h\in\WK, \label{localdisc:MF}\\
A_{F}^{h,\E}(\varphi_h,\phi_h)&:=
A_{F}^{\E} \left(\PiK \varphi_h,\PiK \phi_h\right) +s^{\D}_{\E}\big(({\rm I}-\PiK) \varphi_h,({\rm I}-\PiK) \phi_h\big) \qquad \: \forall  \varphi_h,\phi_h\in\WK. \label{localdisc:AF}
\end{align}

For the approximation of the local trilinear form $B_{F}^{\E}(\cdot; \cdot, \cdot)$, we consider set   
\begin{equation}\label{disc-formB}
B_{F}^{h,\E}(\zeta_h; \varphi_h,\phi_h)
:=\int_{\E}\left[ \left(\PioK \Delta \zeta_h \right)  \left(\PimunoK \curl \varphi_h  \right)\right] \cdot  \PimunoK \nabla \phi_h \qquad  \forall \zeta_h, \varphi_h,\phi_h\in\WK.
\end{equation}

For the treatment of the right-hand side associate to the fluid equation, 
we set the following local load term:
\begin{equation*}
	F_{\psi}^{h,\E}(\phi_h)= \int_{\E} \PimunoK \fb_{\psi}(t) \cdot \curl \phi_h \equiv \int_{\E} \fb_{\psi}(t) \cdot\PimunoK  \curl \phi_h \qquad \forall \phi_h \in \WK, \quad \text{for a.e.  $t \in (0,T)$}. 
\end{equation*}

Thus, for all  $ \zeta_h, \varphi_h,\phi_h\in\Wh$, we define the associated global forms 
$M_{F}^h, A_{F}^h, B_{F}^h, F_{\psi}^h$ in the usual way, by summing the local forms on all mesh elements. 
For instance
$$
M_{F}^h : \Wh \times  \Wh \to \R, \qquad 
M_{F}^{h}(\varphi_h, \phi_h) :=\sum_{\E \in \CT_h} M_{F}^{h,\E} (\varphi_h, \phi_h).
$$

We recall that the forms defined above are computable using the degrees of freedom $\DXu-\DXf$.
In addition, we have that the  trilinear form $B_F^h(\cdot;\cdot,\cdot)$ is immediately extendable to the whole $\HdoO$.

The following result establishes the usual $k$-consistency
and stability properties for the discrete local forms 
$M_{F}^{h,\E}(\cdot,\cdot)$ and $A_{F}^{h,\E}(\cdot,\cdot)$. 
\begin{proposition}\label{cons-stab-forms}
The local bilinear forms    defined in \eqref{MF-cont}, \eqref{AF-cont}, \eqref{localdisc:MF} and 
\eqref{localdisc:AF},  satisfy the following properties:
		\begin{itemize}
	\item \textit{$k$-consistency}: for all $\E\in\CT_h$, we have that
\begin{equation*}
M_{F}^{h,\E}(q,\phi_h)=M_{F}^{\E}(q,\phi_h), \qquad \qquad
A_{F}^{h,\E}(q,\phi_h)
=A_{F}^{\E}(q,\phi_h) \qquad \forall q\in\P_k(\E), \quad\forall \phi_h\in\WK.
	\end{equation*}

\item \textit{stability and boundedness}: there exist positive constants
$\alpha_i, i=1,\ldots,4,$ independent of $\E$, such that:
\begin{align*}
\alpha_1 M_{F}^{\E}(\phi_h,\phi_h)&\leq M_{F}^{h,\E}(\phi_h,\phi_h)
\leq\alpha_2 M_{F}^{\E}(\phi_h,\phi_h)&\forall \phi_h\in\WK,\\
\alpha_3 A_{F}^{\E}(\phi_h,\phi_h)&\leq A_{F}^{h,\E}(\phi_h,\phi_h)
\leq\alpha_4 A_{F}^{\E}(\phi_h,\phi_h)
&\forall \phi_h\in\WK.
\end{align*}		
\end{itemize}
\end{proposition}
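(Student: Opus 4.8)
The plan is to verify the two properties separately for $M_F^{h,\E}$ and $A_F^{h,\E}$, exploiting that both forms have the identical ``projection plus stabilization'' structure $a^{h,\E}(u,v) = a^\E(\Pi u, \Pi v) + s_\E((I-\Pi)u,(I-\Pi)v)$, so the argument is the same template twice. I would first record the key algebraic facts about the projectors: that $\PiK$ and $\PicurlK$ are \emph{linear idempotents} from $\WK$ onto $\P_k(\E)$ (they fix polynomials up to the kernel of ${\tt P_0}$-type constraints, but when applied to a polynomial $q\in\P_k(\E)$ the defining equations are solved by $q$ itself, hence $\PiK q = q$ and $\PicurlK q = q$). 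This is the single fact that drives consistency.

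\emph{$k$-consistency.} Take $q\in\P_k(\E)$ and $\phi_h\in\WK$. For $A_F^{h,\E}$: since $\PiK q = q$, we have $(I-\PiK)q = 0$, so the stabilization term $s^{\D}_\E((I-\PiK)q,(I-\PiK)\phi_h)$ vanishes. The remaining term is $A_F^\E(\PiK q,\PiK\phi_h) = A_F^\E(q,\PiK\phi_h)$. Now use the defining orthogonality of $\PiK$, namely $A_F^\E(\phi_h - \PiK\phi_h, q_k) = 0$ for all $q_k\in\P_k(\E)$; taking $q_k = q$ and using symmetry of $A_F^\E$ gives $A_F^\E(q,\PiK\phi_h) = A_F^\E(q,\phi_h)$. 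Hence $A_F^{h,\E}(q,\phi_h) = A_F^\E(q,\phi_h)$. The argument for $M_F^{h,\E}$ is verbatim with $\PicurlK$ in place of $\PiK$ and $M_F^\E$ in place of $A_F^\E$, using the analogous orthogonality $M_F^\E(\phi_h - \PicurlK\phi_h,q_k)=0$.

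\emph{Stability and boundedness.} Fix $\phi_h\in\WK$ and split $\phi_h = \PiK\phi_h + (I-\PiK)\phi_h$. By the orthogonality defining $\PiK$ (take $q_k = \PiK\phi_h$), $A_F^\E$ is ``Pythagorean'': $A_F^\E(\phi_h,\phi_h) = A_F^\E(\PiK\phi_h,\PiK\phi_h) + A_F^\E((I-\PiK)\phi_h,(I-\PiK)\phi_h)$. Meanwhile $A_F^{h,\E}(\phi_h,\phi_h) = A_F^\E(\PiK\phi_h,\PiK\phi_h) + s^{\D}_\E((I-\PiK)\phi_h,(I-\PiK)\phi_h)$. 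Since $(I-\PiK)\phi_h\in\mathrm{Ker}(\PiK)$, the stabilization bound \eqref{term:stab:SK} gives $c_2 A_F^\E((I-\PiK)\phi_h,(I-\PiK)\phi_h) \le s^{\D}_\E((I-\PiK)\phi_h,(I-\PiK)\phi_h) \le c_3 A_F^\E((I-\PiK)\phi_h,(I-\PiK)\phi_h)$. Combining the two displays, $A_F^{h,\E}(\phi_h,\phi_h)$ is sandwiched between $\min\{1,c_2\}\,A_F^\E(\phi_h,\phi_h)$ and $\max\{1,c_3\}\,A_F^\E(\phi_h,\phi_h)$, so one takes $\alpha_3 = \min\{1,c_2\}$ and $\alpha_4 = \max\{1,c_3\}$. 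The same reasoning with $\PicurlK$, $M_F^\E$, $s^{\bc}_\E$, and constants $c_0,c_1$ yields $\alpha_1 = \min\{1,c_0\}$, $\alpha_2 = \max\{1,c_1\}$.

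The only mild subtlety — and the place I would be careful — is checking that $(I-\PiK)\phi_h$ genuinely lies in $\mathrm{Ker}(\PiK)$, i.e.\ that $\PiK$ is a true projection; this follows because applying $\PiK$ to $\PiK\phi_h\in\P_k(\E)$ returns $\PiK\phi_h$ (polynomials are fixed), so $\PiK(I-\PiK)\phi_h = \PiK\phi_h - \PiK\phi_h = 0$. Everything else is the standard VEM consistency/stability bookkeeping and requires no mesh regularity beyond what guarantees \eqref{term:stab:SK}. No serious obstacle is expected; the proof is essentially a transcription of the classical argument of \cite{BBCMMR2013,BM13}.
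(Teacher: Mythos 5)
Your proposal is correct and follows essentially the same route as the paper, whose proof consists of a one-line appeal to the standard VEM arguments of \cite{BBCMMR2013,ABSV2016,BBMRm3as2016}: polynomial invariance of $\PiK$ and $\PicurlK$ together with their defining orthogonality gives $k$-consistency, and the Pythagorean splitting $\phi_h=\PiK\phi_h+({\rm I}-\PiK)\phi_h$ combined with the stabilization bounds \eqref{term:stab:SK} gives the sandwich with $\alpha$'s equal to $\min\{1,c_i\}$ and $\max\{1,c_j\}$. You have simply written out in full the standard argument the paper cites, so no further changes are needed.
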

\begin{proof}
The proof follows standard arguments in the VEM literature (see \cite{ABSV2016,BBCMMR2013,BBMRm3as2016}).
\end{proof}
		
Now, we continue with the construction of the forms associated to the energy equation. 
First, let $s^0_{\E}(\cdot, \cdot)$ and $s^{\nabla}_{\E}(\cdot, \cdot)$ be any symmetric positive definite bilinear forms such that 
\begin{equation}\label{term:stab:SKTemp}
\begin{split}
c_4 M_T^{\E}(v_h,v_h)&\leq s^{0}_{\E}(v_h,v_h)\leq c_5 M_T^{\E}(v_h,v_h)\qquad \qquad
\forall v_h \in  \mathrm{Ker}(\Pi_{\E}^{\ell}),\\
c_6 A_T^{\E}(v_h,v_h)&\leq s_{\E}^{\nabla}(v_h,v_h)\leq c_7 A_T^{\E}(v_h,v_h)
\qquad \qquad \forall v_h \in  \mathrm{Ker}(\PinablaK),
	\end{split}
\end{equation}
for some positive constants $c_4$, $c_5$, $c_6$ and $c_7$, independent of $h$ and $\E$. 
We will choose the classical representation for these stabilizing forms satisfying  
property \eqref{term:stab:SKTemp}   (see \cite{BLR-M3AS,BS18-M3AS,CMS2016}):
\begin{equation*}
s^{0}_{\E}(v_h,w_h):=  h_{\E}^2\sum_{j=1}^{\dim(\HE)} \dof^{\HE}_j(v_h)\dof^{\HE}_j(w_h),  
\qquad 
s^{\nabla}_{\E}(v_h,w_h):=  \sum_{j=1}^{\dim(\HE)} \dof^{\HE}_j(v_h)\dof^{\HE}_j(w_h),  
	\end{equation*}
	where the operator $\dof^{\HE}_j(v)$ associates to each  smooth enough function 
	$v$  the $j$th local degree of freedom $\dof^{\HE}_j(v)$, with $1 \leq j \leq \dim(\HE)$. 
	Then, we set the following approximation for the forms $M_{T}^{\E}(\cdot,\cdot)$
and $A_{T}^{\E}(\cdot,\cdot)$ (cf. \eqref{MT-cont} and \eqref{AT-cont})
\begin{align*}	
M_{T}^{h,\E}(v_h,w_h)&:=
M_{T}^{\E}  \left(\Pi_{\E}^{\ell} v_h,\Pi_{\E}^{\ell} w_h\right) +s^{0}_{\E}\big(({\rm I}-\Pi_{\E}^{\ell}) v_h,({\rm I}-\Pi_{\E}^{\ell}) w_h\big)  &\forall  v_h,w_h\in\HE, \\ 
A_{T}^{h,\E}(v_h,w_h)&:= \int_{\E}  \PiellK\nabla v_h \cdot \PiellK\nabla w_h +s^{\nabla}_{\E}\big(({\rm I}-\PinablaK) v_h,({\rm I}-\PinablaK) w_h\big)  & \forall  v_h,w_h\in\HE. 
\end{align*}	

We have that the bilinear forms $M_{T}^{h,\E}(\cdot,\cdot)$ and $A_{T}^{h,\E}(\cdot,\cdot)$ satisfy the classical  $\ell$-consistency and stability properties (analogous to Proposition~\eqref{cons-stab-forms}). For further details,
see \cite{BBCMMR2013,BBMRm3as2016,CMS2016}.

To approximate of bilinear form $C^{\E}(\cdot,\cdot)$, we set
\begin{equation*}
C^{h,\E}(w_h,\phi_h):= \int_{\E} \gb \Pi^{\ell-1}_{\E} w_h\cdot \PimunoK \curl \phi_h	 
\qquad \forall w_h \in \HE,  \: \forall\phi_h \in \WK. 
\end{equation*} 

Now, we consider the following discrete trilinear form 
\begin{equation*}
B_T^{h,\E} (\varphi_h;v_h,w_h) := \int_{\E}  \left( \PimunoK \curl \varphi_h \cdot \PiellK \nabla v_h  \right) \Pi^{\ell-1}_{\E} w_h
\qquad \forall \varphi_h \in  \WK, \: \forall w_h, v_h \in \HE. 
\end{equation*}
Then, for the skew-symmetric trilinear form $B_{\skew}^{\E} (\cdot;\cdot,\cdot)$ (cf. \eqref{BT-cont-skew}),
we set the following approximation:
\begin{equation*}
B_{\skew}^{h,\E}(\varphi_h;v_h,w_h) := \frac{1}{2} (B_T^{h,\E} (\varphi_h;v_h,w_h) - B_T^{h,\E} (\varphi_h;w_h,v_h)) 
\qquad \forall  \varphi_h \in  \WK, \: \forall w_h, v_h \in \HE. 
\end{equation*}

For the treatment of the right-hand side associated to the temperature discretization, 
we set following local load term
\begin{equation*}
	F_{\theta}^{h,\E}(v_h):= \int_{\E} \Pi_{\E}^{\ell-1} f_{\theta}(t) v_h \equiv \int_{\E} f_{\theta}(t)\Pi_{\E}^{\ell-1}  v_h \qquad \forall v_h \in \HE  \quad \text{for a.e.  $t\in (0,T)$}. 
\end{equation*}	
	
Thus, for all $ \zeta_h \in \Wh$ and for all $v_h,w_h\in\Hh$, we define the associated global forms 
$M_{T}^h, C^h, B_{\skew}^h, F_{\theta}^h$ in the usual way, by summing the local forms on all mesh elements. 
For instance
$$
M_{T}^h : \Hh \times  \Hh \to \R, \qquad M_{T}^{h}(v_h, w_h) :=\sum_{\E \in \CT_h} M_{T}^{h,\E} (v_h, w_h).
$$

We finish this section summarizing  some properties of the discrete global forms  defined above.

\begin{lemma}\label{lemma:disc:form}
For each $\zeta_h,\varphi_h,\phi_h  \in \Wh$ and each $v_h,w_h \in \Hh$, the global forms defined above
satisfy the following properties:
\begin{align*}
|M^h_F(\varphi_h,\phi_h)| & \leq  \,  \widehat{C}_{M_F}\|\varphi_h\|_{1,\O} \|\phi_h\|_{1,\O} \qquad \text{and} \qquad
M^h_F(\phi_h,\phi_h) \ge \widehat{\alpha}_{M_F}\|\phi_h\|_{1,\O}^2,\\		
|M^h_T(v_h,w_h)| & \leq  \,  \widehat{C}_{M_T}\|v_h\|_{0,\O} \|w_h\|_{0,\O} \qquad \text{and} \qquad
M^h_T(v_h,v_h) \ge \widehat{\alpha}_{M_T}\|v_h\|_{0,\O}^2,\\
|A^h_F(\varphi_h,\phi_h)| & \leq  \,  \widehat{C}_{A_F}\|\varphi_h\|_{2,\O} \|\phi_h\|_{2,\O} \qquad \text{and} \qquad
A^h_F(\phi_h,\phi_h) \ge\widehat{\alpha}_{A_F}\|\phi_h\|_{2,\O}^2,\\
|A^h_T(v_h,w_h)| & \leq  \,  \widehat{C}_{A_T}\|v_h\|_{1,\O} \|w_h\|_{1,\O} \qquad \text{and} \qquad
A^h_T(v_h,v_h) \ge \widehat{\alpha}_{A_T}\|v_h\|_{1,\O}^2,\\
|B^h_F(\zeta_h;\varphi_h,\phi_h)| &\leq \CBFh \, \|\zeta_h\|_{2,\O} \|\varphi_h\|_{2,\O} \|\phi_h\|_{2,\O} 
\qquad \text{and} \qquad B^h_F(\zeta_h;\phi_h,\phi_h)= 0,\\
|B^h_{\skew}(\zeta;v_h,w_h)| &\leq \CBTh \, \|\zeta_h\|_{2,\O} \|v_h\|_{1,\O} \|w_h\|_{1,\O}\qquad \text{and} \qquad B^h_{\skew}(\zeta;v_h,v_h)= 0,\\
|C^h(v,\phi)|\leq \|\gb\|_{\infty,\O}&\|v\|_{0,\O}\|\phi\|_{1,\O}, \quad |F^h_{\psi}(\phi_h)|  \leq  \widehat{C}_{F_{\psi}} \|\fb_{\psi}\|_{0,\O} \|\phi_h\|_{1,\O} \quad \text{and} \quad |F^h_{\theta}(\phi)| \leq  \widehat{C}_{F_{\theta}} \|f_{\theta}\|_{0,\O} \|v_h\|_{0,\O},
	\end{align*}
where all the constants involved are positive and independent of mesh size $h$.
\end{lemma}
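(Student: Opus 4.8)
The plan is to prove every estimate in the standard VEM manner: localise to a single polygon $\E$, there reduce the discrete form to the matching continuous local form plus its stabilisation, sum over the elements $\E\in\O_h$ with a discrete Hölder (or Cauchy--Schwarz) inequality, and finally pass to the global norms using the coercivity/Poincaré bounds already recorded in Lemma~\ref{ACBF-bound} and the Sobolev inequality~\eqref{sobolev:ineq}.

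For the symmetric forms $M_F^h,A_F^h,M_T^h,A_T^h$ I would argue as follows. Each is a sum over $\E$ of a symmetric positive semidefinite local form, hence satisfies Cauchy--Schwarz, $|a^h(u,v)|\le a^h(u,u)^{1/2}a^h(v,v)^{1/2}$. Proposition~\ref{cons-stab-forms} and its temperature analogue give $\alpha\,a^{\E}(\phi_h,\phi_h)\le a^{h,\E}(\phi_h,\phi_h)\le\beta\,a^{\E}(\phi_h,\phi_h)$ with $a^\E$ the restriction of the corresponding continuous form; summing, and recalling that $A_F^{\E}(\phi,\phi)=|\phi|_{2,\E}^2$, $M_F^{\E}(\phi,\phi)=|\phi|_{1,\E}^2$, $A_T^{\E}(v,v)=|v|_{1,\E}^2$ and $M_T^{\E}(v,v)=\|v\|_{0,\E}^2$, yields the boundedness estimates (after $|\cdot|_{s,\O}\le\|\cdot\|_{s,\O}$) and, combined with Lemma~\ref{ACBF-bound}, the coercivity estimates with the stated constants; for the form $M_F^h$ one additionally invokes the Poincaré inequality on $\HuoO\supset\HdoO$, since $M_F$ itself only controls the $\H^1$-seminorm. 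The buoyancy form $C^h$ and the loads $F_\psi^h,F_\theta^h$ are then immediate: on each $\E$ apply Cauchy--Schwarz together with the $\L^2$-stability of the $\L^2$-projections, $\|\Pi_\E^{n}w\|_{0,\E}\le\|w\|_{0,\E}$ (and likewise in the vectorial case), using $\|\curl\phi_h\|_{0,\E}=|\phi_h|_{1,\E}$, and sum.

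The substantive part is the trilinear forms. For the boundedness of $B_F^h$ I would use, on each $\E$, Hölder with exponents $(2,4,4)$,
\[
|B_F^{h,\E}(\zeta_h;\varphi_h,\phi_h)|\le\|\PioK\Delta\zeta_h\|_{0,\E}\,\|\PimunoK\curl\varphi_h\|_{\L^4(\E)}\,\|\PimunoK\nabla\phi_h\|_{\L^4(\E)},
\]
bound the first factor by $\|\Delta\zeta_h\|_{0,\E}$ ($\L^2$-stability of $\PioK$) and the last two by $C\|\curl\varphi_h\|_{\L^4(\E)}$ and $C\|\nabla\phi_h\|_{\L^4(\E)}$ via the $\L^4$-stability of the $\L^2$-projection onto polynomials on shape-regular polygons (a scaling plus polynomial inverse-inequality argument, the only place where \textbf{A1}--\textbf{A2} enter here); then sum over $\E$ with the discrete $(2,4,4)$-Hölder inequality and use~\eqref{sobolev:ineq} (recalling $\curl\varphi_h,\nabla\phi_h\in\HH^1_0(\O)$) together with $\|\Delta\zeta_h\|_{0,\O}\le\sqrt2\,|\zeta_h|_{2,\O}$. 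This gives $|B_F^h(\zeta_h;\varphi_h,\phi_h)|\le\CBFh\,\|\zeta_h\|_{2,\O}\|\varphi_h\|_{2,\O}\|\phi_h\|_{2,\O}$. The estimate for $B_T^h$ is the same with exponents $(4,2,4)$, now leaving $\PiellK\nabla v_h$ in $\L^2(\E)$ and applying~\eqref{sobolev:ineq} to $\curl\varphi_h$ and to $w_h\in\HuoO$; $B_{\skew}^h$ inherits the bound by the triangle inequality.

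It remains to verify the two identities. $B_{\skew}^h(\zeta_h;v_h,v_h)=0$ is immediate from the antisymmetrisation in the definition of $B_{\skew}^{h,\E}$. For $B_F^h(\zeta_h;\phi_h,\phi_h)=0$ the key observation — and the one subtlety I would spell out in full — is that $\curl\phi_h=R\nabla\phi_h$ for the fixed $90^\circ$ rotation $R$, that $\PP_{k-1}(\E)$ is $R$-invariant while $R$ acts as an $\L^2(\E)$-isometry, so that $\PimunoK$ commutes with $R$; hence $\PimunoK\curl\phi_h=R(\PimunoK\nabla\phi_h)$ and the integrand $(\PioK\Delta\phi_h)\,(\PimunoK\curl\phi_h)\cdot\PimunoK\nabla\phi_h$ vanishes pointwise, since a $90^\circ$ rotation of a vector is orthogonal to it. I expect the main obstacle to be exactly the bookkeeping in these trilinear bounds — choosing the $\L^p$ exponents so that the $h$-powers coming from the inverse inequalities cancel and no negative power of $h$ survives — together with this rotation-commutation argument.
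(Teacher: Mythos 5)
Your proposal is correct. The paper states this lemma without proof (these are standard VEM properties), and your argument supplies exactly the techniques the paper itself uses elsewhere: the Cauchy--Schwarz/stability argument for the symmetric forms (Proposition~\ref{cons-stab-forms} plus Lemma~\ref{ACBF-bound} and Poincar\'e for $M_F^h$), and the H\"older--$\L^4$-stability--Sobolev chain that reappears verbatim in the proof of Lemma~\ref{boundBFh:two} for the trilinear bounds. The one genuinely nontrivial point, $B_F^h(\zeta_h;\phi_h,\phi_h)=0$, is handled correctly by your observation that the $90^\circ$ rotation $R$ with $\curl\phi_h=R\nabla\phi_h$ preserves $\PP_{k-1}(\E)$ and hence commutes with $\PimunoK$, so the integrand $(\PioK\Delta\zeta_h)\,(R\,\PimunoK\nabla\phi_h)\cdot\PimunoK\nabla\phi_h$ vanishes pointwise by skew-symmetry of $R$.
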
 

\begin{remark}
If $\fb_{\psi}$ is given as an explicit function, then we can consider the following alternative
discrete  load term 
\begin{equation*}
F_{\psi}^{h}(\phi_h) :=	\sum_{\E \in \CT_h} \int_{\E} \rot \fb_{\psi}(t) \PioK \phi_h \qquad \forall \phi_h \in \Wh,   
\end{equation*} 
 which is also computable using the degrees of freedom ${\bf D_1}-{\bf D_5}$. 
\end{remark}

\setcounter{equation}{0}
\section{Fully-discrete formulation and its well posedness}\label{fullydiscrete:BSE}	

In order to present a full discretization of problem \eqref{BSE-stream2}
we introduce a sequence of time steps  $t_n = n \Delta t$, $n=0,1,2,\ldots,N$, where $\Delta t=T/N$ is the time step. 
Moreover, we consider the following approximations at each time $t_n$: 
$\psi_h^n \approx\psi_h(t_n)$ and $\theta_h^n\approx\theta_h(t_n)$. 
For the external forces, we introduce the following notation: 
$\fb_{\psi}^n:=\fb_{\psi}(t_n)$, $f^n_{\theta}:=f_{\theta}(t_n)$ and $\gb^n:=\gb(t_n)$. 
 
We consider the backward Euler method coupled with the VE 
discretization presented in Section~\ref{VEM:section}, which read as follows:  
given $(\psi_h^0,\theta_h^{0})$, find $\{(\psi_h^n,\theta_h^{n})\}_{n=1}^{N} \in \Wh \times \Hh$, such that
\begin{equation}\label{eq:VEM2}
\begin{split}
M^h_F \left( \frac{\psi_{h}^n-\psi_{h}^{n-1}}{\Delta t},\phi_h \right) + \nu A^h_F(\psi^n_h,\phi_h) +B^h_F(\psi^{n}_h;\psi^n_h,\phi_h)- C^h(\theta^{n}_h, \phi_h) &=F_{\psi}^{h}(\phi_h) \qquad \forall \phi_h \in \Wh, \\
M^h_T \left( \frac{\theta_{h}^n-\theta_{h}^{n-1}}{\Delta t},v_h \right)+ \kappa A^h_T(\theta^n_h,v_h) +B^h_{\skew}(\psi^{n}_h;\theta^{n}_h,v_h)&=F_{\theta}^{h}(v_h) \qquad \forall v_h \in \Hh.
\end{split}
\end{equation}

The functions $(\psi_h^0,\theta_h^{0})$  are initial approximations of 
$(\psi_h,\theta_h)$ at $t=0$. 
For instance, we will consider $\psi_h^0:=\mS_h \psi_0$ (see~\eqref{dis:engy:bilap}) and $\theta_h^0:=\mP_h \theta_0$, with $\mP_h(\cdot)$ being the energy operator associated to the 
$\H^1$-inner product (for further details, see for instance \cite{VB15}).

In what follows, we will provide the well-posedness of the fully-discrete formulation~\eqref{eq:VEM2}.
\begin{theorem}\label{existence}
Let $\widehat{\alpha}:= \min\left\{\aMFh,\aMTh\right\}$ and  $\gamma :=\min\left\{\aAFh\nu,\aATh \kappa \right\}$, where $\aMFh,\aMTh, \aAFh$  and $\aATh$ are the constants in Lemma~\ref{lemma:disc:form}. Assume that	
\begin{equation}\label{assump:existence}
\widehat{\alpha} +\Delta t\left(\gamma - C_{\gb} \right) >0,
\end{equation}
where $C_{\gb} := \|\gb\|_{\L^{\infty}(0,T;\LL^{\infty}(\O))}$.
Then the fully-discrete scheme~\eqref{eq:VEM2} admits at least one solution 
$(\psi_h^n,\theta^n_h) \in \Wh \times \Hh$ at every time step $t_n$, with $n=1, \ldots, N$.
\end{theorem}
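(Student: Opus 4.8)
The natural approach is a fixed-point argument on the coupled discrete system at a fixed time step $t_n$, treating $(\psi_h^{n-1},\theta_h^{n-1})$ as given data. First I would define a map $\mathcal{G}: \Wh \times \Hh \to \Wh \times \Hh$ as follows: given $(\widetilde\psi_h, \widetilde\theta_h)$, let $(\psi_h,\theta_h) := \mathcal{G}(\widetilde\psi_h,\widetilde\theta_h)$ be the unique solution of the \emph{linearized} problem obtained by freezing the transport velocity $\curl\psi_h$ at $\curl\widetilde\psi_h$ in the trilinear forms $B_F^h$ and $B_{\skew}^h$, and freezing the buoyancy coupling appropriately; that is, solve the linear system
\begin{equation*}
\begin{split}
\tfrac{1}{\Delta t}M_F^h(\psi_h,\phi_h) + \nu A_F^h(\psi_h,\phi_h) + B_F^h(\widetilde\psi_h;\psi_h,\phi_h) - C^h(\theta_h,\phi_h) &= F_\psi^h(\phi_h) + \tfrac{1}{\Delta t}M_F^h(\psi_h^{n-1},\phi_h),\\
\tfrac{1}{\Delta t}M_T^h(\theta_h,v_h) + \kappa A_T^h(\theta_h,v_h) + B_{\skew}^h(\widetilde\psi_h;\theta_h,v_h) &= F_\theta^h(v_h) + \tfrac{1}{\Delta t}M_T^h(\theta_h^{n-1},v_h).
\end{split}
\end{equation*}
That this linear problem is well-posed follows from Lax–Milgram: the combined bilinear form on $(\psi_h,\theta_h)$ is continuous by Lemma~\ref{lemma:disc:form}, and coercive because $B_F^h(\widetilde\psi_h;\phi_h,\phi_h)=0$ and $B_{\skew}^h(\widetilde\psi_h;v_h,v_h)=0$ (skew-symmetry), while the only troublesome cross-term $-C^h(\theta_h,\phi_h)$ is controlled by $\|\gb\|_{\infty}\|\theta_h\|_{0,\O}\|\phi_h\|_{1,\O}$; here assumption~\eqref{assump:existence} guarantees that after absorbing this term into the diagonal coercivities $\widehat\alpha/\Delta t + \gamma$ one retains strict positivity.

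The second step is to establish an a priori bound on any fixed point, or more precisely to exhibit a closed ball $\mathcal{B}_R \subset \Wh \times \Hh$ that $\mathcal{G}$ maps into itself. Testing the linearized system with $\phi_h = \psi_h$ and $v_h = \theta_h$, summing, and using the vanishing of the convective terms plus the bound on $C^h$, I get
\begin{equation*}
\left(\tfrac{\widehat\alpha}{\Delta t} + \gamma - C_{\gb}\right)\left(\|\psi_h\|_{2,\O}^2 + \|\theta_h\|_{1,\O}^2\right) \;\lesssim\; \|F_\psi^h\|\,\|\psi_h\|_{2,\O} + \|F_\theta^h\|\,\|\theta_h\|_{1,\O} + \tfrac{1}{\Delta t}\left(\|\psi_h^{n-1}\|_{\ast}\|\psi_h\|_{2,\O} + \|\theta_h^{n-1}\|_{\ast}\|\theta_h\|_{1,\O}\right),
\end{equation*}
where the left coefficient is positive by \eqref{assump:existence} (note $\widehat\alpha/\Delta t > \widehat\alpha/(\Delta t)$ only helps; in fact $\widehat\alpha + \Delta t(\gamma - C_{\gb})>0$ is exactly what makes $\widehat\alpha/\Delta t + \gamma - C_{\gb}>0$). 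This yields a bound $\|\psi_h\|_{2,\O}^2 + \|\theta_h\|_{1,\O}^2 \le R^2$ with $R$ depending only on the data, $\Delta t$, and the previous-step quantities — crucially \emph{independent} of the fixed-point iterate $\widetilde\psi_h$. Hence $\mathcal{G}(\mathcal{B}_R)\subseteq\mathcal{B}_R$.

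The third step is continuity of $\mathcal{G}$ on the finite-dimensional space $\Wh\times\Hh$ (all norms equivalent), so that the Brouwer fixed-point theorem applies. Continuity is routine: if $\widetilde\psi_h^{(m)}\to\widetilde\psi_h$, the corresponding linear operators converge, the inf-sup/coercivity constants are uniform on $\mathcal{B}_R$, and standard stability of linear solves gives convergence of the solutions; one may argue by contradiction using compactness of $\mathcal{B}_R$. Brouwer then furnishes a fixed point $(\psi_h^n,\theta_h^n)\in\mathcal{B}_R$, which by construction solves~\eqref{eq:VEM2}. I expect the main obstacle to be bookkeeping the coercivity constant precisely enough that assumption~\eqref{assump:existence} — rather than a stronger smallness condition on $\Delta t$ — is exactly what is needed; in particular one must be careful to put the buoyancy term on the $\psi$-equation side and bound it by the \emph{weaker} norms ($\|\theta_h\|_{0,\O}$, $\|\phi_h\|_{1,\O}$) so that it is dominated by the full $\gamma$-coercivity in the $\|\cdot\|_{2,\O}$/$\|\cdot\|_{1,\O}$ norms, exactly as in Lemma~\ref{lemma:disc:form}. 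Uniqueness is \emph{not} claimed here (that requires the additional small-$\Delta t$ hypothesis mentioned in the introduction), so the argument stops at existence.
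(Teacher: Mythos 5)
Your argument is correct, and it rests on exactly the same two structural facts as the paper's proof: the skew-symmetry properties $B_F^h(\cdot;\phi_h,\phi_h)=0$ and $B^h_{\skew}(\cdot;v_h,v_h)=0$, which make the convective terms disappear under diagonal testing, and assumption~\eqref{assump:existence}, which lets the buoyancy cross-term $C^h(\theta_h,\phi_h)$ be absorbed into the combined coercivity $\widehat{\alpha}/\Delta t+\gamma$. What differs is the packaging of the fixed-point step. The paper does not linearize at all: it defines the nonlinear residual operator $\Phi:\Xh\to(\Xh)^{\ast}$ in \eqref{operator}, checks its continuity, shows $\langle\Phi(x),x\rangle\ge 0$ on the sphere of a suitably chosen radius $\rho$ in the norm $(\|\phi_h\|^2_{1,\O}+\|w_h\|^2_{0,\O})^{1/2}$, and invokes the corollary of Brouwer's theorem from \cite{GR} to produce a zero of $\Phi$ inside that ball. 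Your route goes through a solution operator $\mathcal{G}$ for the frozen-transport linearized problem; this costs an extra well-posedness step (positive-definiteness of the linearized form, which again follows from \eqref{assump:existence} and skew-symmetry) but buys the standard self-map form of Brouwer, and your invariant ball exists precisely because the frozen convective terms still vanish diagonally, so the a priori bound on $\mathcal{G}$ is uniform in the frozen iterate. Both arguments are sound. One small imprecision on your side: in your displayed energy inequality the coefficient $\widehat{\alpha}/\Delta t+\gamma-C_{\gb}$ cannot multiply the strong norms $\|\psi_h\|^2_{2,\O}+\|\theta_h\|^2_{1,\O}$, since the mass terms only control $\|\psi_h\|^2_{1,\O}+\|\theta_h\|^2_{0,\O}$ (the norm the paper also uses for its ball $\mathcal{S}$); as all norms on the finite-dimensional space $\Xh$ are equivalent this does not affect the conclusion, but the bound should be stated in the weaker product norm.
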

 
\begin{proof}
For simplicity we set $\Xh:=\Wh \times \Hh$ and we endow this space  with the following equivalent norm:
\begin{equation*}
|||(\phi_h,w_h) |||:=( \|\phi_h\|^2_{1,\O}+\|w_h\|^2_{0,\O})^{\frac{1}{2}} \qquad \forall (\phi_h,w_h) \in \Xh.
\end{equation*}

Next, for $ 1 \leq n \leq N$, let  $(\psi^{n-1}_{h},\theta^{n-1}_{h}) \in \Xh$ and for any $(\psi_h,\theta_h) \in \Xh$, we consider the operator $\Phi: \Xh \to (\Xh)^{\ast}$ defined by
\begin{equation}\label{operator}
\begin{split}
\langle \Phi(\psi_h,\theta_h),(\phi_h,w_h)  
\rangle &:=M^h_F(\psi_h,\phi_h) - M^h_F(\psi_{h}^{n-1},\phi_h) 
+ \nu \Delta t A^h_F(\psi_h,\phi_h)+ \Delta t B^h_F(\psi_h;\psi_h,\phi_h)  \\
& \quad - \Delta t F_{\psi}^{h}(\phi_h) +M^h_T(\theta_h,w_h )  -M^h_T(\theta_{h}^{n-1},w_h)  + \kappa\Delta t A^h_T(\theta_h,w_h)\\
& \quad+\Delta t B^h_{\skew}(\psi_h;\theta_h,w_h) 
- \Delta t F_{\theta}^{h}(w_h)- \Delta tC^h(\theta_h, \phi_h) \qquad \forall (\phi_h,w_h) \in \Xh.
\end{split}
\end{equation}

By using the definition of operator $\Phi$ and Lemma~\ref{lemma:disc:form}, we easily have that
\begin{equation*}
\|\Phi(\psi_h,\theta_h)- \Phi(\psi^{\star}_h,\theta^{\star}_h)\|_{(\Xh)^{\ast}} \longrightarrow 0, \quad \text{when} \quad  (\psi_h,\theta_h)\xrightarrow[]{|||\cdot|||}	(\psi^{\star}_h,\theta^{\star}_h), 
\end{equation*}
i.e., $\Phi$ is continuous.

On the other hand,  employing again Lemma~\ref{lemma:disc:form} and the Young inequality, 
for all $(\psi_h,\theta_h) \in \Xh$, we obtain
\begin{equation*}
\begin{split}
\langle \Phi(\psi_h&,\theta_h),(\psi_h,\theta_h)  \rangle 
\geq\aMFh \|\psi_h\|^2_{1,\O}- \frac{\CMFh^2}{2\aMFh}\|\psi^{n-1}_h\|^2_{1,\O} 
- \frac{\aMFh}{2}\|\psi_h\|^2_{1,\O} + \aAFh \nu \Delta t \|\psi_h\|^2_{2,\O}
- \frac{\Delta t }{2\aAFh \nu} \|\fb^{n}_{\psi}\|^2_{0,\O}\\
&\quad - \frac{\aAFh \nu \Delta t}{2}\|\psi_h\|^2_{2,\O}+
\aMTh \|\theta_h\|^2_{0,\O}- \frac{\CMTh^2}{2\aMTh}\|\theta^{n-1}_h\|^2_{0,\O} 
- \frac{\aMTh}{2}\|\theta_h\|^2_{0,\O} + \aATh \kappa \Delta t\|\theta_h\|^2_{1,\O}\\
& \quad- \frac{\Delta t }{2\aATh \kappa} \|f^{n}_{\theta}\|^2_{0,\O}
- \frac{\aATh \kappa\Delta t }{2}\|\theta_h\|^2_{1,\O}
- \frac{\Delta t C_{\gb}}{2} (\|\psi_h\|^2_{1,\O}+\|\theta_h\|^2_{0,\O})\\
& \geq\min\left\{\aMFh,\aMTh\right\} (\|\psi_h\|^2_{1,\O} +\|\theta_h\|^2_{0,\O}) 
+\frac{\Delta t}{2} \min\left\{\aAFh\nu,\aATh \kappa \right\} (\|\psi_h\|^2_{1,\O} +\|\theta_h\|^2_{0,\O})\\
& \quad  - \frac{\Delta t C_{\gb}}{2} (\|\psi_h\|^2_{1,\O}+\|\theta_h\|^2_{0,\O})
- \frac{\CMFh^2}{2\aMFh}\|\psi^{n-1}_h\|^2_{1,\O}+ \frac{\CMTh^2}{2\aMTh}\|\theta^{n-1}_h\|^2_{0,\O} \\
&\quad	- \frac{\Delta t }{2\aAFh \nu} \|\fb^{n}_{\psi}\|^2_{0,\O}- \frac{\Delta t }{2\aATh \kappa} \|f^{n}_{\theta}\|^2_{0,\O}\\
& \geq \frac{1}{2}\left(\widehat{\alpha} +\Delta t \left(\gamma - C_{\gb} \right) \right)  (\|\psi_h\|^2_{1,\O}+\|\theta_h\|^2_{0,\O})- \frac{\CMFh^2}{2\aMFh}\|\psi^{n-1}_h\|^2_{1,\O}- \frac{\CMTh^2}{2\aMTh}\|\theta^{n-1}_h\|^2_{0,\O}\\  
&\quad
 - \frac{\Delta t }{2\aAFh \nu} \|\fb^{n}_{\psi}\|^2_{0,\O}- \frac{\Delta t }{2\aATh \kappa} \|f^{n}_{\theta}\|^2_{0,\O}.
	\end{split}
\end{equation*}
 
Thus, from assumption \eqref{assump:existence}, we can set 
\begin{equation*}
\rho := \left(\widehat{\alpha} +\Delta t\left(\gamma - C_{\gb} \right) \right) ^{-\frac{1}{2}} \left(
	\frac{\CMFh^2}{\aMFh}\|\psi^{n-1}_h\|^2_{1,\O} + \frac{\CMTh^2}{\aMTh}\|\theta^{n-1}_h\|^2_{0,\O}
	+ \frac{\Delta t }{\aAFh \nu} \|\fb^{n}_{\psi}\|^2_{0,\O}+ \frac{\Delta t }{\aATh \kappa} \|f^{n}_{\theta}\|^2_{0,\O}\right)^{\frac{1}{2}},
\end{equation*}
and  $\mathcal{S}:= \left\{(\varphi_h,w_h) \in \Xh :|||(\varphi_h,w_h) ||| \leq \rho 
\right \}$. Then, we have that
\begin{equation*}
	\langle \Phi(\psi_h,\theta_h),(\psi_h,\theta_h)  \rangle \geq 0  \quad \text{for any} \quad (\psi_h,\theta_h) \in  \partial \mathcal{S}.
\end{equation*}
Then, by employing the  fixed-point Theorem~\cite[Chap. IV, Corollary 1.1]{GR}, 
there exists $(\psi^n_{h},\theta^n_{h}) \in \mathcal{S}$, such that  
$\Phi(\psi_h^n,\theta_h^n)=\0$, i.e., the fully-discrete problem~\eqref{eq:VEM2} admits 
at least  one solution $(\psi_h^n,\theta_h^n) \in \mathcal{S}$ at every time step $t_n$.
\end{proof} 
\begin{remark} 
From assumption~\eqref{assump:existence} it follows that if $C_{\gb} > \gamma$, that is when the buoyancy term is strong when compared to the diffusion terms, a ``small time step condition" is needed in order to guarantee the existence of a discrete solution.   
\end{remark}

The following result establishes that the fully-discrete scheme~\eqref{eq:VEM2} 
is unconditionally stable.  
\begin{theorem}\label{uncond:stable}
Assume that $\fb_{\psi} \in \L^2(0,T;\LL^2(\O))$, $f_{\theta} \in \L^2(0,T;\L^2(\O))$, $\gb \in \L^{\infty}(0,T;\LL^{\infty}(\O))$. Moreover, suppose that the initial data satisfy 
$\psi_0 \in \H^2_0(\O)$ and $\theta_0 \in \H_0^1(\O)$. Then,  the fully-discrete 
scheme~\eqref{eq:VEM2} are unconditionally stable and satisfy the following estimate for 
any $0<m \leq N$
\begin{equation*}
\begin{split}
\|(\psi^m_{h},\theta^m_{h})\|_{\H^1(\O) \times \L^2(\O)} 
&+\Big(\Delta t\sum_{n=0}^m  \|(\psi^n_{h},\theta^n_{h})\|^2_{\H^2(\O) \times \H^1(\O)}  \Big)^{\frac{1}{2}}\\
&\leq C \Big(\Big(\Delta t\sum_{n=0}^m  \|(\fb_{\psi}^n,f_{\theta}^n)\|^2_{\LL^2(\O) \times \L^2(\O)} \Big)^{\frac{1}{2}} + \|(\psi_0,\theta_0)\|_{\H^2(\O) \times \H^1(\O)}\Big)=:\delta, 
\end{split}
\end{equation*}
where $C>0$ is independent of $h$ and $\Delta t$.
\end{theorem}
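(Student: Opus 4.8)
The plan is to derive an energy estimate by testing the fully-discrete scheme~\eqref{eq:VEM2} with the natural choices $\phi_h = \psi_h^n$ and $v_h = \theta_h^n$, then summing over the time levels $n=1,\dots,m$. First I would exploit the skew-symmetry of the discrete trilinear forms established in Lemma~\ref{lemma:disc:form}, namely $B^h_F(\psi_h^n;\psi_h^n,\psi_h^n)=0$ and $B^h_{\skew}(\psi_h^n;\theta_h^n,\theta_h^n)=0$, so that the convective terms vanish identically and no smallness assumption on the data is needed. This is precisely the structural advantage of the stream-function formulation advertised in the introduction, and it is what makes the stability \emph{unconditional}.

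Next I would handle the discrete time derivative using the algebraic identity $2a(a-b) = a^2 - b^2 + (a-b)^2 \ge a^2 - b^2$, applied inside the coercive bilinear forms $M_F^h$ and $M_T^h$. Concretely, using $M_F^h(\psi_h^n-\psi_h^{n-1},\psi_h^n) \ge \tfrac12 M_F^h(\psi_h^n,\psi_h^n) - \tfrac12 M_F^h(\psi_h^{n-1},\psi_h^{n-1})$ and the analogous bound for $M_T^h$, together with the coercivity estimates $M_F^h(\phi_h,\phi_h)\ge\aMFh\|\phi_h\|_{1,\O}^2$, $M_T^h(v_h,v_h)\ge\aMTh\|v_h\|_{0,\O}^2$, $A_F^h(\phi_h,\phi_h)\ge\aAFh\|\phi_h\|_{2,\O}^2$, $A_T^h(v_h,v_h)\ge\aATh\|v_h\|_{1,\O}^2$ from Lemma~\ref{lemma:disc:form}. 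The remaining terms to control are the buoyancy coupling $C^h(\theta_h^n,\psi_h^n)$ and the two load functionals; for these I would use the continuity bounds $|C^h(v,\phi)|\le\|\gb\|_{\infty,\O}\|v\|_{0,\O}\|\phi\|_{1,\O}$, $|F_\psi^h(\phi_h)|\le\CBFh\|\fb_\psi\|_{0,\O}\|\phi_h\|_{1,\O}$ (wait — $\widehat{C}_{F_\psi}\|\fb_\psi\|_{0,\O}\|\phi_h\|_{1,\O}$), $|F_\theta^h(v_h)|\le\widehat{C}_{F_\theta}\|f_\theta\|_{0,\O}\|v_h\|_{0,\O}$, followed by Young's inequality to absorb $\|\psi_h^n\|_{2,\O}^2$ and $\|\theta_h^n\|_{1,\O}^2$ into the coercive diffusion terms (using $\|\cdot\|_{1,\O}\le\|\cdot\|_{2,\O}$ and $\|\cdot\|_{0,\O}\le\|\cdot\|_{1,\O}$ for the lower-order norms on the buoyancy term). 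After telescoping the $M$-terms, this yields
\begin{equation*}
\|(\psi_h^m,\theta_h^m)\|_{\H^1(\O)\times\L^2(\O)}^2 + \Delta t\sum_{n=1}^m\|(\psi_h^n,\theta_h^n)\|_{\H^2(\O)\times\H^1(\O)}^2 \le C\Big(\Delta t\sum_{n=1}^m\|(\fb_\psi^n,f_\theta^n)\|_{\LL^2(\O)\times\L^2(\O)}^2 + \|(\psi_h^0,\theta_h^0)\|_{\H^1(\O)\times\L^2(\O)}^2\Big),
\end{equation*}
with $C$ depending only on $\nu,\kappa$, the constants of Lemma~\ref{lemma:disc:form}, and $C_{\gb}$, but \emph{not} on $h$ or $\Delta t$. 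Finally, I would bound the initial-data contribution: since $\psi_h^0=\mS_h\psi_0$ and $\theta_h^0=\mP_h\theta_0$ are the energy projections, their discrete norms are controlled by $\|\psi_0\|_{2,\O}$ and $\|\theta_0\|_{1,\O}$ respectively, giving the stated bound $\delta$ after taking square roots and using $\sqrt{a+b}\le\sqrt{a}+\sqrt{b}$.

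The main subtlety — though not a genuine obstacle — is that no Gronwall argument and no smallness condition are required here, in contrast with the existence result Theorem~\ref{existence}: the buoyancy term $C^h(\theta_h^n,\psi_h^n)$ is estimated by $C_{\gb}\|\theta_h^n\|_{0,\O}\|\psi_h^n\|_{1,\O} \le C_{\gb}\|\theta_h^n\|_{1,\O}\|\psi_h^n\|_{2,\O}$, and after Young's inequality the quadratic diffusion terms $\aAFh\nu\Delta t\|\psi_h^n\|_{2,\O}^2 + \aATh\kappa\Delta t\|\theta_h^n\|_{1,\O}^2$ dominate because the buoyancy contribution only couples the \emph{lower-order} norms — the mismatch of orders ($\H^1$ versus $\H^2$ for the stream-function, $\L^2$ versus $\H^1$ for the temperature) is exactly what lets us absorb it regardless of the size of $C_{\gb}$ or $\Delta t$. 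One must be slightly careful to split the coefficient $\varepsilon$ in Young's inequality so that a genuinely positive fraction of each diffusion term survives on the left-hand side; the resulting constant $C$ then absorbs the factors $1/(\aAFh\nu)$, $1/(\aATh\kappa)$, $\CMFh^2/\aMFh$, $\CMTh^2/\aMTh$ and the buoyancy bound. Summing the telescoped inequality over $n$ and discarding the nonnegative numerical-dissipation terms $M_F^h(\psi_h^n-\psi_h^{n-1},\cdot)$, $M_T^h(\theta_h^n-\theta_h^{n-1},\cdot)$ completes the argument.
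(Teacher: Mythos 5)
Your overall energy framework (testing with $\psi_h^n$ and $\theta_h^n$, the skew-symmetry of $B_F^h$ and $B^h_{\skew}$ killing the convective terms, the telescoping identity for $M_F^h$ and $M_T^h$, Young's inequality on the load terms) is the same as the paper's, but your treatment of the buoyancy term has a genuine gap. You add the two tested equations and try to absorb $C^h(\theta_h^n,\psi_h^n)\le C_{\gb}\|\theta_h^n\|_{0,\O}\|\psi_h^n\|_{1,\O}$ into \emph{both} diffusion terms at once, claiming the ``mismatch of orders'' lets you do this ``regardless of the size of $C_{\gb}$''. That is not so: after Young's inequality you are left with $\tfrac{C_{\gb}^2}{2\epsilon}\|\theta_h^n\|_{0,\O}^2+\tfrac{\epsilon}{2}\|\psi_h^n\|_{1,\O}^2$; the second piece is harmless (take $\epsilon<2\aAFh\nu$), but the first piece can only be absorbed into $\aATh\kappa\|\theta_h^n\|_{1,\O}^2$ if $C_{\gb}^2/(2\epsilon)\le\aATh\kappa$, which combined with the constraint on $\epsilon$ forces $C_{\gb}^2\lesssim \nu\kappa$ --- a smallness condition on the buoyancy, failing precisely in the large-Rayleigh regime the paper tests numerically. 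The lower-order nature of the norms does not rescue you, because the coefficients of the diffusion terms are fixed; for large $\|\gb\|_{\L^\infty}$ the surviving left-hand-side coefficient becomes negative and the argument collapses. (Compare Theorem~\ref{existence}, which genuinely needs assumption~\eqref{assump:existence} for this very reason; the content of the present theorem is that \emph{stability} needs no such condition.)

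The paper avoids this by exploiting the one-directional structure of the coupling: the temperature equation contains no buoyancy term, so testing it alone with $\theta_h^n$ yields a bound on $\|\theta_h^m\|_{0,\O}^2+\Delta t\sum_{n}\|\theta_h^n\|_{1,\O}^2$ in terms of $f_\theta$ and $\theta_h^0$ only, with no reference to $\psi_h^n$. One then tests the momentum equation, splits the buoyancy as $\tfrac{C_{\gb}^2}{2\epsilon}\|\theta_h^n\|_{0,\O}^2+\tfrac{\epsilon}{2}\|\psi_h^n\|_{1,\O}^2$, absorbs only the $\psi$-part into $\aAFh\nu\|\psi_h^n\|_{2,\O}^2$, and treats $\Delta t\sum_n\|\theta_h^n\|_{0,\O}^2$ as already-controlled data by inserting the temperature estimate. (A discrete Gronwall argument on your combined inequality would also repair the proof, at the price of an $e^{CT}$ constant, but you explicitly disclaim needing Gronwall, and the sequential route is cleaner.) With this two-step replacement of your simultaneous absorption, the rest of your argument, including the treatment of the initial data via the projections $\mS_h\psi_0$ and $\mP_h\theta_0$, goes through.
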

\begin{proof}
Let $(\psi_h^n,\theta^n_h) \in \Wh \times \Hh$ be a solution of fully-discrete problem~\eqref{eq:VEM2}. 
We consider the following equivalent norms:
\begin{equation}\label{norm:heatfluid}
||| \phi_h|||_{F,h} := (M^h_F(\phi_h,\phi_h))^{1/2} , \qquad		
||| v_h|||_{T,h} := (M^h_T(v_h,v_h))^{1/2} 
\quad \phi_h \in \Wh, \ \forall v_h \in \Hh. 	
\end{equation}

Taking $v_h =  \theta_h^n \in \Hh$ in the second equation of~\eqref{eq:VEM2}, 
using Lemma~\ref{lemma:disc:form}, the Young inequality and some identities of real numbers,
we obtain
\begin{equation*}
\frac{1}{2 \Delta t}(|||\theta^n_h|||^2_{T,h}-|||\theta^{n-1}_h|||^2_{T,h}) 
+ \aATh \kappa \|\theta^n_h\|^2_{1,\O}
\leq C_{F_{\theta}}\|f^n_{\theta}\|_{0,\O} \|\theta^n_h\|_{1,\O} \leq C\|f^n_{\theta}\|^2_{0,\O} 
 + \frac{1}{2}\aATh \kappa \|\theta^n_h\|^2_{1,\O}.  	
\end{equation*}

Then, multiplying by $2\Delta t$, using the equivalence of norms and summing 
for $n=1,\ldots, m$, we have that 
\begin{equation}\label{stab:theta}
\|\theta^m_h\|^2_{0,\O} + \Delta t \sum^m_{n=1}\|\theta^n_h\|^2_{1,\O} 
\leq C \Big(\Delta t \sum^m_{n=1}\|f^n_{\theta}\|^2_{0,\O} +\|\theta^{0}_h\|^2_{0,\O}\Big).
\end{equation}

Analogously, taking $\phi_h= \psi^n_h \in \Wh$ in the first equation of \eqref{eq:VEM2} 
and repeating the same arguments, we obtain
\begin{equation}\label{pre:stab:psi}
|||\psi^n_h|||^2_{F,h}-|||\psi^{n-1}_h|||^2_{F,h} + \aAFh \nu \Delta t \|\psi^n_h\|^2_{2,\O}
\leq  C \Delta t C_{\gb}\|\theta^n_h\|^2_{0,\O} + C\Delta t\|\fb^n_{\psi}\|^2_{0,\O}, 
\end{equation}
where the constant $C_{\gb}$ is defined in Theorem \ref{existence}.

Now,  summing for $n=1,\ldots, m$, inserting \eqref{stab:theta} in \eqref{pre:stab:psi} and 
using the equivalence of norms and, we get
 \begin{equation}\label{stab:psi}
 	\|\psi^m_h\|^2_{1,\O} + \Delta t \sum^m_{n=1}\|\psi^n_h\|^2_{2,\O} 
 	\leq C \Big(\Delta t \sum^m_{n=1}\big(\|\fb^n_{\psi}\|^2_{0,\O}+\|f^n_{\theta}\|^2_{0,\O}\big) +\|\psi^{0}_h\|^2_{1,\O} +\|\theta^{0}_h\|^2_{0,\O}\Big),
 \end{equation}
where the constant $C_{\gb}$ was included in the constant $C$ to shorten the bound. 

Finally, the desired result follows adding \eqref{stab:theta} and \eqref{stab:psi}.
\end{proof}

We now recall local inverse inequalities for the virtual spaces $\WK$ and $\HE$  
(see \cite{BM15,CH2017}):
\begin{equation}\label{inverse:ineq}
	|\phi_h|_{2,\E} \leq \Cinv h^{-1}_{\E} |\phi_h|_{1,\E} \quad \forall \phi_h \in \WK \quad \text{and} \quad |v_h|_{1,\E} \leq \Cinv h^{-1}_{\E} \|v_h\|_{0,\E} 
\quad \forall v_h \in \HE,\\
\end{equation}

The following result establishes that  the solution of scheme~\eqref{eq:VEM2} 
is unique for small values of  $\Delta t$.

\begin{theorem}\label{uniqueness}
Let $\aMFh,\aMTh,\CBFh$ and $\CBTh$ be the constants in Lemma~\ref{lemma:disc:form}. 
Moreover, let $\delta$ be the upper bound  in Theorem~\ref{uncond:stable}, 
$C_{\gb}$ be the constant defined in Theorem \ref{existence} and $\Cinv$ be the constant 
in \eqref{inverse:ineq}.
Assume  that
	\begin{equation}\label{assump:stepsmall}
	\Delta t < \min\{\aMFh, \aMTh\} \min\left\{ \frac{ h_{min}^2}{2\Cinv^2(\CBFh+\CBTh)\delta},
	\frac{ h_{min}}{2\Cinv C_{\gb}} \right\}.
\end{equation}

Then, for each $n=1, \ldots, N$ the solution of the fully-discrete scheme~\eqref{eq:VEM2} is unique.
\end{theorem}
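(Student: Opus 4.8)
The plan is the standard uniqueness argument for implicit discretizations of Boussinesq-type systems: assume two solutions $(\psi^n_{h,1},\theta^n_{h,1})$ and $(\psi^n_{h,2},\theta^n_{h,2})$ of~\eqref{eq:VEM2} share the same data at time $t_{n-1}$ (which holds by induction on $n$, the base case $n=0$ being the prescribed initial data), set the differences $e_\psi := \psi^n_{h,1}-\psi^n_{h,2} \in \Wh$ and $e_\theta := \theta^n_{h,1}-\theta^n_{h,2} \in \Hh$, subtract the two copies of each equation, and test the temperature equation with $v_h = e_\theta$ and the stream-function equation with $\phi_h = e_\psi$. Since the $t_{n-1}$ terms cancel, the left-hand sides produce $\frac{1}{\Delta t}M^h_F(e_\psi,e_\psi) + \nu A^h_F(e_\psi,e_\psi)$ and $\frac{1}{\Delta t}M^h_T(e_\theta,e_\theta) + \kappa A^h_T(e_\theta,e_\theta)$, which by Lemma~\ref{lemma:disc:form} are bounded below by $\frac{\aMFh}{\Delta t}\|e_\psi\|^2_{1,\O} + \nu\aAFh\|e_\psi\|^2_{2,\O}$ and $\frac{\aMTh}{\Delta t}\|e_\theta\|^2_{0,\O} + \kappa\aATh\|e_\theta\|^2_{1,\O}$.

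The right-hand sides are the differences of the nonlinear and coupling terms, which I would split using the skew-symmetry properties $B^h_F(\zeta_h;\phi_h,\phi_h)=0$ and $B^h_{\skew}(\zeta_h;v_h,v_h)=0$ from Lemma~\ref{lemma:disc:form}. For the convective terms I would write, e.g.,
\[
B^h_F(\psi^n_{h,1};\psi^n_{h,1},e_\psi) - B^h_F(\psi^n_{h,2};\psi^n_{h,2},e_\psi) = B^h_F(e_\psi;\psi^n_{h,1},e_\psi) + B^h_F(\psi^n_{h,2};e_\psi,e_\psi),
\]
the last term vanishing; the surviving term is bounded by $\CBFh\|e_\psi\|_{2,\O}\|\psi^n_{h,1}\|_{2,\O}\|e_\psi\|_{2,\O}$, and similarly for $B^h_{\skew}$ the residual after cancellation involves $\CBTh\|e_\psi\|_{2,\O}\|\theta^n_{h,1}\|_{1,\O}\|e_\theta\|_{1,\O}$. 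The coupling term contributes $C^h(e_\theta,e_\psi) \le C_{\gb}\|e_\theta\|_{0,\O}\|e_\psi\|_{1,\O}$. Crucially, I would use the a priori stability bound of Theorem~\ref{uncond:stable} to replace $\|\psi^n_{h,1}\|_{2,\O}$ and $\|\theta^n_{h,1}\|_{1,\O}$ by $\delta$, and then invoke the inverse inequality~\eqref{inverse:ineq} to trade $\|e_\psi\|_{2,\E} \le \Cinv h_{\E}^{-1}\|e_\psi\|_{1,\E}$ (hence $\le \Cinv h_{min}^{-1}\|e_\psi\|_{1,\O}$ globally) and $\|e_\theta\|_{1,\E}\le \Cinv h_{min}^{-1}\|e_\theta\|_{0,\O}$, so that every nonlinear/coupling term can be absorbed into the $M^h_F$ and $M^h_T$ coercivity contributions provided $\Delta t$ satisfies~\eqref{assump:stepsmall}.

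Adding the two tested equations and multiplying by $\Delta t$, the inequality collapses to something of the form $\big(\aMFh - \Delta t\,\Cinv h_{min}^{-1}(\ldots)\big)\|e_\psi\|^2_{1,\O} + \big(\aMTh - \Delta t\,\Cinv h_{min}^{-1}(\ldots)\big)\|e_\theta\|^2_{0,\O} \le 0$, where the bracketed quantities are strictly positive exactly under~\eqref{assump:stepsmall} (the two minima in that hypothesis correspond precisely to controlling the cubic convective terms with factor $\delta$ and the bilinear buoyancy term with factor $C_{\gb}$, respectively). This forces $e_\psi = 0$ and $e_\theta = 0$, completing the induction step and hence the proof. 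The main obstacle is purely bookkeeping: carefully distributing the nonlinear differences so that the vanishing diagonal terms are isolated, and then matching each leftover term to the correct piece of~\eqref{assump:stepsmall} with the right power of $h_{min}$ and the right constant — getting the factors of $2$ and the split between $\CBFh+\CBTh$ versus $C_{\gb}$ to line up with the stated hypothesis. No genuinely hard estimate is involved; everything reduces to Lemma~\ref{lemma:disc:form}, Theorem~\ref{uncond:stable}, and~\eqref{inverse:ineq}.
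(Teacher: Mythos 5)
Your proposal is correct and follows essentially the same route as the paper: subtract the two copies of~\eqref{eq:VEM2}, split the convective differences so that skew-symmetry kills the diagonal terms, test with the errors, and absorb the surviving terms into the $M^h_F$, $M^h_T$ coercivity via Lemma~\ref{lemma:disc:form}, the inverse inequalities~\eqref{inverse:ineq} and the stability bound $\delta$. The only point to state with care is that Theorem~\ref{uncond:stable} controls $\|\psi^n_{h,1}\|_{1,\O}$ and $\|\theta^n_{h,1}\|_{0,\O}$ (not the $\H^2$/$\H^1$ norms pointwise in time), so the inverse inequality must be applied to the solution itself as well as to the error — which is precisely what produces the $h_{min}^{2}$ in the convective part of~\eqref{assump:stepsmall} and is consistent with the condition you arrive at.
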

\begin{proof}
Let $1 \leq n \leq N$ and $(\psi_{h1}^n,\theta^n_{h1}),(\psi_{h2}^n,\theta_{h2}^n)\in \Wh \times \Hh$ 
be two solutions of  problem~\eqref{eq:VEM2}.  
Then, setting $\widetilde{\psi^n_h} :=\psi_{h1}^n-\psi_{h2}^n$, 
$\widetilde{\theta_{h}^n}:=\theta_{h1}^n-\theta_{h2}^n$ and using the definition 
of operator~\eqref{operator}, for all $(\phi_h,v_h) \in \Wh \times \Hh$, we have that
\begin{equation}\label{unique1}
\begin{split}
&M^h_F(\widetilde{\psi^n_h},\phi_h)+M^h_T(\widetilde{\theta^n_h},v_h ) 
+ \nu \Delta t A^h_F(\widetilde{\psi^n_{h}},\phi_h)+ \kappa \Delta t A^h_T(\widetilde{\theta^n_h},v_h)- \Delta t C^h(\widetilde{\theta^n_h}, \phi_h) \\
&\quad+\Delta t(B^h_F(\psi^n_{h1};\psi^n_{h1},\phi_h)-B^h_F(\psi^n_{h2};\psi^n_{h2},\phi_h))
+\Delta t (B^h_{\skew}(\psi^n_{h1};\theta^n_{h1},v_h)-B^h_{\skew}(\psi^n_{h2};\theta_{h2}^n,v_h))
=0.
\end{split}
\end{equation}

Adding and subtracting $B^h_F(\psi^n_{h2};\psi^n_{h1},\phi_h)$ and  $B^h_{\skew}(\psi^n_{h2};\theta^n_{h1},v_h)$ we obtain
\begin{align*}
	B^h_F(\psi^n_{h1};\psi^n_{h1},\phi_h)-B^h_F(\psi^n_{h2};\psi^n_{h2},\phi_h) &= B^h_F(\widetilde{\psi^n_h};\psi^n_{h1},\phi_h)+B^h_F(\psi^n_{h2};\widetilde{\psi^n_h},\phi_h)\\
	B^h_{\skew}(\psi^n_{h1};\theta^n_{h1},v_h)-B^h_{\skew}(\psi^n_{h2};\theta^n_{h2},v_h)&=
	B^h_{\skew}(\widetilde{\psi^n_h};\theta^n_{h1},v_h)+B^h_{\skew}(\psi^n_{h2};\widetilde{\theta^n_{h}},v_h).
\end{align*}

Next, taking $\phi_h=\widetilde{\psi^n_h}$  and $v_h=\widetilde{\theta^n_h}$ in~\eqref{unique1},  from
the above identities, the skew-symmetry of trilinear forms, the continuity and coercivity properties of  
the multilinear forms involved (cf. Lemma~\ref{lemma:disc:form}), it follows 
\begin{equation*}
\begin{split}
& \aMFh\|\widetilde{\psi^n_h}\|^2_{1,\O}+ \aMTh\|\widetilde{\theta^n_h}\|^2_{0,\O} 
+ \aAFh\nu \Delta t \|\widetilde{\psi^n_h}\|^2_{2,\O}+ \aATh\kappa \Delta t \|\widetilde{\theta^n_h}\|^2_{1,\O} \\
&\leq \Delta t \CBFh\|\widetilde{\psi^n_h}\|_{2,\O} \|\psi^n_{h1}\|_{2,\O} \|\widetilde{\psi^n_h}\|_{2,\O} 
+\Delta t \CBTh\|\widetilde{\psi^n_h}\|_{2,\O} \|\theta^n_{h1}\|_{1,\O} \|\widetilde{\theta^n_h}\|_{1,\O} + \Delta tC_{\gb}\|\widetilde{\theta^n_h}\|_{1,\O}  \|\widetilde{\psi^n_h}\|_{2,\O}\\
&\leq \Delta t \CBFh\|\widetilde{\psi^n_h}\|^2_{2,\O} \|\psi^n_{h1}\|_{2,\O} 
+\frac{1}{2} \Delta t \CBTh \|\theta^n_{h1}\|_{1,\O} (\|\widetilde{\psi^n_h}\|^2_{2,\O} 
+ \|\widetilde{\theta^n_h}\|^2_{1,\O})+ \frac{1}{2}\Delta t C_{\gb} (\|\widetilde{\psi^n_h}\|^2_{2,\O} 
+ \|\widetilde{\theta^n_h}\|^2_{1,\O})\\
&\leq \Delta t\left(\CBFh\|\psi^n_{h1}\|_{2,\O}+\CBTh\|\theta^n_{h1}\|_{1,\O}+ C_{\gb} \right)\|(\widetilde{\psi^n_h},\widetilde{\theta^n_h})\|^2_{\H^2(\O) \times \H^1(\O)}.
\end{split}
\end{equation*}

Now, employing  local inverse inequalities \eqref{inverse:ineq} in the above estimate 
and Theorem~\ref{uncond:stable}, we get
\begin{equation*}	
\begin{split}
\min\{\aMFh,\aMFh\} \|(\widetilde{\psi^n_h},\widetilde{\theta^n_h})\|^2_{\H^1(\O) \times \L^2(\O)}
\leq   \Cinv h^{-1}_{min} \Delta t\left((\CBFh+ \CBTh) \Cinv h_{min}^{-1}\delta  + C_{\gb} \right)\|(\widetilde{\psi^n_h},\widetilde{\theta^n_h})\|^2_{\H^1(\O) \times \L^2(\O)}.
\end{split}
 \end{equation*}

From the assumption~\eqref{assump:stepsmall}, we have that
\begin{equation}\label{lipsc}
\frac{\Cinv h^{-1}_{min}}{\min\{\aMFh,\aMFh\}} \left((\CBFh+ \CBTh)\Cinv h_{min}^{-1}\delta  + C_{\gb} \right)	 <1. 
\end{equation}
Thus, $\widetilde{\psi_h^n}=0$ and $\widetilde{\theta^n_h}=0$, which implies
 $\psi^n_{h1}=\psi^n_{h2}$ and 
$\theta^n_{h1}=\theta^n_{h2}$.  The proof is complete.
\end{proof}
\begin{remark}
Exploiting the fact that we are in the two dimensional case and using 
sharper Sobolev bounds for the convective terms, 
we could get a power $h_{min}^{-\epsilon}$, for all $\epsilon>0$,  instead of $h_{min}^{-1}$ 
in the term $h_{min}^{-1}\delta$ (see equation \eqref{lipsc}).
\end{remark}

\setcounter{equation}{0}
\section{Convergence analysis}\label{SEC:convergence}
This section is devoted to the  convergence analysis of the fully-discrete 
formulation~\eqref{eq:VEM2} introduced in the previous section. 
We  start recalling some preliminary results of approximation in the polynomial 
and virtual spaces. Moreover, we introduce  an energy operator associated to the 
$\H^2$-inner product with its corresponding 
approximation properties. Later on, we state technical results, which will be 
useful to provide the convergence result of our fully-discrete virtual scheme.
 
\subsection{Preliminary results}
First, we recall the following polynomial approximation result (see for instance \cite{BS-2008}).
Here below $E$ represents as usual a generic element of $\{ {\Omega}_h \}_{h>0}$, 
which we recall satisfies assumptions  ${\bf A1}$, ${\bf A2}$ in Section \ref{meshassump}.

\begin{proposition}\label{app-pol}
For each $\phi\in \H^{m}(\E)$,
there exist $\phi_{\pi}\in\P_{n}(\E)$, $n \geq 0$ and $C>0$ independent of $h_{\E}$,  such that
\begin{equation*}
\|\phi-\phi_{\pi}\|_{t,\E} \leq Ch_\E^{m- t}|\phi|_{m,\E},
\quad t,m \in {\mathbb R} , \ 0 \leq t \leq m \leq n+1 .
\end{equation*} 
\end{proposition}

We continue with the following approximation for the stream-function and temperature 
virtual element spaces, which can be found  in~\cite{G2021,BMR2019,BM13} and \cite{MRR2015,CMS2016,BBMRm3as2016}, respectively. 

\begin{proposition}\label{approx-virtual}
For each  $\phi\in \H^{m}(\Omega)$, 
there exist $\phi_{I}\in\Wh$ and $\CI >0$, independent of $h$, such that
\begin{equation*}
\|\phi-\phi_{I}\|_{t,\Omega} \leq \CI h^{m-t}|\phi|_{m,\Omega},  \quad t =0,1,2,
\quad 2 < m \leq k+1, \quad k \geq 2.
\end{equation*}
\end{proposition}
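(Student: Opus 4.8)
The statement to be proved is Proposition~\ref{approx-virtual}, the optimal interpolation estimate for the $C^1$-virtual element space $\Wh$ in the $\H^0$-, $\H^1$- and $\H^2$-norms. The plan is to argue element by element and then sum, following the standard VEM interpolation strategy. First, I would fix $\E \in \O_h$ and recall that, because the degrees of freedom $\DXu$–$\DXf$ are unisolvent on $\WK$, there is a well-defined local interpolation operator $I_{\E}: \H^{m}(\E) \to \WK$ characterized by matching all of the functionals $\dof_i^{\WK}$ on $\E$; the global operator $\phi_I$ is obtained by gluing the local interpolants, which is $C^1$-conforming precisely because the degrees of freedom on a shared edge (vertex values, scaled gradients at vertices, normal-derivative moments, and edge moments) are shared by the two adjacent elements, so $\phi_I \in \HdoO$ and hence $\phi_I \in \Wh$. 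The key local estimate is
\begin{equation*}
\|\phi - I_{\E}\phi\|_{t,\E} \leq C h_{\E}^{\,m-t}|\phi|_{m,\E}, \qquad t = 0,1,2,\quad 2 < m \leq k+1,
\end{equation*}
with $C$ depending only on $k$ and on the mesh-regularity constant $\rho$ from assumptions ${\bf A1}$–${\bf A2}$.

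To obtain this local estimate I would use the classical Bramble--Hilbert / bounded-projection argument. Pick $\phi_{\pi} \in \P_{k}(\E) \subset \WK$ the polynomial from Proposition~\ref{app-pol}. Since $I_{\E}$ reproduces polynomials of degree $k$ (because $\P_k(\E)\subset\WK$ and $I_\E$ is a projection onto $\WK$ fixing the degrees of freedom), we can write $\phi - I_{\E}\phi = (\phi - \phi_{\pi}) - I_{\E}(\phi - \phi_{\pi})$, and then it suffices to bound $\|I_{\E}(\phi-\phi_{\pi})\|_{t,\E}$. The heart of the matter is a \emph{stability} bound for $I_{\E}$ of the form $\|I_{\E}\eta\|_{2,\E} \leq C(\,|\eta|_{2,\E} + \text{lower-order scaled terms}\,)$; this is proved by a scaling argument to a reference configuration together with trace and inverse inequalities on the virtual space, controlling each degree of freedom of $I_\E\eta$ by appropriate norms of $\eta$ (vertex values and vertex gradients by $\H^{2+\varepsilon}$-type traces on edges, which is exactly why one needs $m>2$ and the $\H^2(\E) \hookrightarrow C^0(\bar\E)$, $\nabla\phi \in C^0$ embeddings in two dimensions). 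Combining the stability bound applied to $\eta = \phi - \phi_\pi$ with the polynomial approximation estimates of Proposition~\ref{app-pol} for $\|\phi-\phi_\pi\|_{j,\E}$, $j=0,1,2$, yields the local estimate for $t=2$; the cases $t=0,1$ follow the same way (or by a duality/interpolation-space argument), always absorbing the right powers of $h_\E$.

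Finally, I would square the local estimates, sum over all $\E \in \O_h$, use $h_\E \leq h$, and take square roots to obtain
\begin{equation*}
\|\phi - \phi_I\|_{t,\O} \leq \CI\, h^{\,m-t}\,|\phi|_{m,\O}, \qquad t=0,1,2,
\end{equation*}
with $\CI$ independent of $h$; for $t=1,2$ one uses that the broken $\H^t$-seminorm coincides with the full one since $\phi_I \in \HdoO$. The main obstacle — and the only genuinely technical point — is establishing the $h$-uniform $\H^2$-stability of the local interpolation operator $I_{\E}$ on the nonpolynomial space $\WK$: this requires careful scaled trace and inverse estimates on virtual functions under assumptions ${\bf A1}$–${\bf A2}$, and is the place where the restriction $m>2$ (needed so that point values of $\phi$ and $\nabla\phi$ make sense) genuinely enters. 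Since all of these ingredients are exactly what is proved in the cited references \cite{G2021,BMR2019,BM13}, in the paper one may simply invoke them; here I have only sketched why the estimate holds.
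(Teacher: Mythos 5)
The paper does not actually prove this proposition---it simply quotes the estimate from \cite{G2021,BMR2019,BM13}---and your sketch is a faithful outline of the standard argument established in those references (DoF-defined local interpolant glued into a $C^1$-conforming global one, polynomial reproduction plus Bramble--Hilbert, $h$-uniform stability of the interpolation operator, summation over elements), so it is essentially the same approach. The only caveat is presentational: for general polygons there is no fixed reference element to scale to, so the stability step is obtained through scaled trace/inverse inequalities on star-shaped elements (assumptions ${\bf A1}$--${\bf A2}$) or an auxiliary sub-triangulation, and the $t=0,1$ cases come from the corresponding lower-order stability of the interpolant rather than a duality argument---precisely the technical content of the works you defer to.
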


For the temperature variable, we present local and global approximation properties. 
\begin{proposition}\label{approx-virtual-temp}
For each  $v\in \H^{m}(\Omega)$,  there exist $v_{I}\in\Hh$ and $\CI >0$,
independent of $h$, such that
\begin{equation*}
\|v-v_{I}\|_{t,\E} \leq \CI h_{\E}^{m-t}|v|_{m,\E} \quad \forall \E \in \CT_h; \quad\|v-v_{I}\|_{t,\Omega} \leq \CI h^{m-t}|v|_{m,\Omega},  \quad t =0,1,
\quad 1 < m \leq \ell+1, \quad \ell \geq 1.
	\end{equation*}
\end{proposition}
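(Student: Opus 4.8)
The plan is to prove Proposition~\ref{approx-virtual-temp} by the standard Virtual Element interpolation argument, following the references cited, but organized in the familiar ``localize, then patch'' structure. First I would fix an element $\E \in \CT_h$ and define a local interpolant $v_I|_{\E} := I_{\E}^h v \in \HE$ by prescribing all the degrees of freedom $\DHu$--$\DHt$ of $v_I|_{\E}$ to agree with the corresponding functionals applied to $v$. This is well posed because the operators $\DHu$--$\DHt$ form a unisolvent set of degrees of freedom for $\HE$ (recalled in Section~\ref{temp:spaces}), and, crucially, because $v \in \H^m(\E)$ with $m>1$ in two dimensions embeds into $C^0(\overline{\E})$, so the vertex values $w_h(\vb_i)$ make sense; the edge and element moments are of course well defined for any $\L^2$ function. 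Since the degrees of freedom of $v_I|_{\E}$ and of a local polynomial best-approximation $v_\pi \in \P_\ell(\E)$ (from Proposition~\ref{app-pol}) can be compared through the same functionals, the key point is that $v_I|_{\E}$ is a \emph{polynomial-preserving} projection onto $\HE$: it reproduces $\P_\ell(\E)$ because $\P_\ell(\E) \subset \HE$ and the degrees of freedom are unisolvent.

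Next I would establish the local estimate. Write $v - v_I = (v - v_\pi) - (v_I - v_\pi) = (v - v_\pi) - I_\E^h(v - v_\pi)$, using linearity and $I_\E^h v_\pi = v_\pi$. For $t=0,1$ I would bound $\|v - v_I\|_{t,\E} \le \|v - v_\pi\|_{t,\E} + \|I_\E^h(v-v_\pi)\|_{t,\E}$. The first term is controlled by Proposition~\ref{app-pol}, giving $Ch_\E^{m-t}|v|_{m,\E}$. For the second term I would invoke the standard stability bound for the VEM interpolation operator on $\HE$, namely $\|I_\E^h w\|_{t,\E} \le C(\|w\|_{0,\E}^{?} \cdots)$ — more precisely the scaled-degrees-of-freedom estimate $\|I_\E^h w\|_{1,\E} \le C\,|w|_{1,\E} + C h_\E^{-1}\|w - \Pi^0_\E w\|_{0,\E}$ type inequality that follows from mesh assumptions ${\bf A1}$--${\bf A2}$, an inverse inequality on $\P_\ell(\E)$, the trace inequality, and a scaling argument relating each degree of freedom to a (scaled) norm of $w$. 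Applying this with $w = v - v_\pi$ and then reusing Proposition~\ref{app-pol} again on $v - v_\pi$ in the appropriate norms produces the same order $Ch_\E^{m-t}|v|_{m,\E}$. Summing the squares over all $\E \in \CT_h$ and taking the square root yields the global estimate $\|v - v_I\|_{t,\Omega} \le \CI h^{m-t}|v|_{m,\Omega}$, with $h = \max_\E h_\E$; here one also uses that $v_I$, built from matching $C^0$ degrees of freedom at shared vertices and edges, automatically lies in $\Hh \subset \HuoO$, so no inter-element correction is needed.

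The main obstacle — or rather the one step that needs genuine care rather than bookkeeping — is the stability bound for $I_\E^h$ in the $\H^1$ (and $\L^2$) seminorm, because $\HE$ is a space of functions that are only implicitly defined (virtual), so one cannot estimate $I_\E^h w$ directly. The standard resolution, which I would cite from \cite{CMS2016,BBMRm3as2016,AABMR13}, is to split $I_\E^h w$ via its $\H^1$-projection $\PinablaK(I_\E^h w)$: the projected part is a polynomial and is handled by polynomial inverse estimates together with the observation that $\PinablaK$ commutes appropriately with the degrees of freedom, while the complementary part $(\mathrm{I}-\PinablaK)I_\E^h w$ is controlled by the stabilization-equivalent scaled degree-of-freedom norm, which by construction of the enhanced space $\HE$ and mesh regularity is equivalent to an $\H^1$-seminorm. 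The only dimension-specific ingredient is the $\H^m(\E) \hookrightarrow C^0(\overline{\E})$ embedding for $m>1$, which is exactly why the hypothesis reads $1 < m \le \ell+1$ rather than $m \ge \ell+1$; I would flag this explicitly. Since all of these are documented in the cited literature, in the actual paper the proof can legitimately be reduced to ``this follows by combining Proposition~\ref{app-pol} with the standard VEM interpolation estimates of \cite{MRR2015,CMS2016,BBMRm3as2016}'', and I would present it that way while keeping the localization/patching skeleton visible for the reader.
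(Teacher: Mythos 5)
Your proposal is correct and coincides with what the paper actually does: the paper gives no proof of Proposition~\ref{approx-virtual-temp}, simply citing \cite{MRR2015,CMS2016,BBMRm3as2016}, and your sketch (dof-matching interpolant, polynomial reproduction, triangle inequality against the best polynomial approximation of Proposition~\ref{app-pol}, and stability of the interpolation operator handled through the $\PinablaK$ split under assumptions ${\bf A1}$--${\bf A2}$) is precisely the standard argument contained in those references. Your explicit flagging of the $\H^m(\E)\hookrightarrow C^0(\overline{\E})$ embedding for $m>1$, needed so that the vertex degrees of freedom of $v$ are well defined, is the right point of care.
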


Now, we will introduce  the following  discrete biharmonic projection associated 
with the stream-function discretization. For each $\varphi \in \H_0^2(\O)$, 
we consider  the operator $\mS_h:\H_0^2(\O)\rightarrow  \Wh$, defined as 
the solution of problem:
\begin{equation}\label{dis:engy:bilap}
A_{F}^h(\mS_h \varphi,\phi_h)=A_{F}(\varphi,\phi_h) \qquad \forall \phi_h \in \Wh,
\end{equation}
where $A_{F}(\cdot,\cdot)$ was defined in \eqref{AF-cont} and we recall that
$A_{F}^h(\cdot,\cdot)$ is the global version of the form defined in \eqref{localdisc:AF}.

By using Propositions~\ref{cons-stab-forms}, \ref{app-pol} and~\ref{approx-virtual}, 
the following approximation result for the energy projection $\mS_h(\cdot)$ holds 
true (see \cite[Lemma 5.3]{AMNS2021-M2AN}).
\begin{proposition}\label{prop:engy:bihar}
For each $\varphi \in \H_0^2(\O)$, there exists a unique function  
$\mathcal{S}_h\varphi\in \Wh$  satisfying \eqref{dis:engy:bilap}. 
Moreover, if  $\varphi \in \H^{2+s}(\O)$, with $\frac{1}{2} < s \leq k-1$, 
then the  following approximation property holds:
\begin{equation*}
\|\varphi-\mathcal{S}_h\varphi\|_{1,\O} +
h^{\tilde{s}}\|\varphi-\mathcal{S}_h\varphi\|_{2,\O} \leq Ch^{\tilde{s}+s}|\varphi|_{2+s,\O},
\end{equation*}
where  $C$ is a positive constant, independent of $h$ and  
$\tilde{s} \in (\frac{1}{2},1]$ depends on the domain $\O$.
\end{proposition}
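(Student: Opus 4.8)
The plan is to establish the two assertions of Proposition~\ref{prop:engy:bihar} in turn: first the well-posedness of~\eqref{dis:engy:bilap}, then the approximation bound. For well-posedness, I would observe that $A_F^h(\cdot,\cdot)$ is, by the stability part of Proposition~\ref{cons-stab-forms} (summed over elements) and Lemma~\ref{lemma:disc:form}, a bounded and coercive bilinear form on $\Wh$ with respect to the $\|\cdot\|_{2,\O}$-norm, while $\phi_h\mapsto A_F(\varphi,\phi_h)$ is a bounded linear functional on $\Wh$ (again by boundedness of $A_F$). Hence the Lax--Milgram lemma gives existence and uniqueness of $\mS_h\varphi\in\Wh$.

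For the approximation estimate, the standard route is a Strang-type argument. Write $\mS_h\varphi-\varphi = (\mS_h\varphi-\varphi_I) + (\varphi_I-\varphi)$, where $\varphi_I\in\Wh$ is the interpolant from Proposition~\ref{approx-virtual}, so that the second term is already controlled: $\|\varphi-\varphi_I\|_{2,\O}\le C h^{s}|\varphi|_{2+s,\O}$ and $\|\varphi-\varphi_I\|_{1,\O}\le Ch^{1+s}|\varphi|_{2+s,\O}$. For the discrete error $\delta_h:=\mS_h\varphi-\varphi_I\in\Wh$, use coercivity:
\begin{equation*}
\aAFh\,\|\delta_h\|_{2,\O}^2 \le A_F^h(\delta_h,\delta_h) = A_F^h(\mS_h\varphi,\delta_h) - A_F^h(\varphi_I,\delta_h) = A_F(\varphi,\delta_h) - A_F^h(\varphi_I,\delta_h),
\end{equation*}
using~\eqref{dis:engy:bilap}. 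Now insert, element by element, a piecewise polynomial $\varphi_\pi$ of degree $k$ from Proposition~\ref{app-pol}, and exploit the $k$-consistency $A_F^{h,\E}(\varphi_\pi,\delta_h)=A_F^\E(\varphi_\pi,\delta_h)$ from Proposition~\ref{cons-stab-forms}. This lets me rewrite the right-hand side as
\begin{equation*}
\sum_{\E}\Big( A_F^\E(\varphi-\varphi_\pi,\delta_h) + A_F^{h,\E}(\varphi_\pi-\varphi_I,\delta_h)\Big),
\end{equation*}
and then bound each piece by continuity of $A_F^\E$ and $A_F^{h,\E}$ times $\|\cdot\|_{2,\E}$ factors, using $\|\varphi-\varphi_\pi\|_{2,\E}\le Ch_\E^{s}|\varphi|_{2+s,\E}$ and $\|\varphi_\pi-\varphi_I\|_{2,\E}\le \|\varphi_\pi-\varphi\|_{2,\E}+\|\varphi-\varphi_I\|_{2,\E}$. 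Dividing by $\|\delta_h\|_{2,\O}$ yields $\|\delta_h\|_{2,\O}\le Ch^{s}|\varphi|_{2+s,\O}$, and with the triangle inequality this gives the $\|\varphi-\mS_h\varphi\|_{2,\O}\le Ch^{s}|\varphi|_{2+s,\O}$ part, i.e. the factor $h^{\tilde s}$ against $h^{\tilde s+s}$ with $\tilde s$ playing no role yet.

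The $\H^1$-estimate requires an Aubin--Nitsche duality argument, and this is the step I expect to be the main obstacle because it is where the exponent $\tilde s$ (the elliptic regularity shift for the biharmonic problem on the polygon $\O$) enters. I would introduce the dual problem: find $w\in\HdoO$ with $A_F(w,v)=(\varphi-\mS_h\varphi, \, g)_{?}$ — more precisely, test against the $\H^1$ error, so solve $A_F(z,v)=\langle J(\varphi-\mS_h\varphi),v\rangle$ for a suitable duality pairing realizing the $\H^1$-norm of the error, and use the regularity bound $\|z\|_{2+\tilde s,\O}\le C\|\varphi-\mS_h\varphi\|_{1,\O}$ (or the appropriate negative-norm version), valid for some $\tilde s\in(\tfrac12,1]$ depending on the corners of $\O$. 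Then one writes $\|\varphi-\mS_h\varphi\|_{1,\O}^2$ (or the relevant duality quantity) in terms of $A_F(z,\varphi-\mS_h\varphi)$, subtracts the discrete equation tested with $z_I=\mS_h z$ or the interpolant $z_I\in\Wh$, and uses Galerkin orthogonality of $\mS_h$ together with consistency to reduce everything to products of approximation errors of $z$ (giving a factor $h^{\tilde s}$) and of $\varphi$ (giving $h^{s}$, or $h^{\tilde s+s}$ when the $\H^2$ bound on the $\varphi$-error is used). Collecting these factors produces $\|\varphi-\mS_h\varphi\|_{1,\O}\le Ch^{\tilde s+s}|\varphi|_{2+s,\O}$, which is exactly the first term of the claimed bound; the careful bookkeeping of the non-conforming consistency terms $A_F^h-A_F$ and the matching of regularity indices $s\le k-1$, $\tilde s\in(\tfrac12,1]$ is the delicate part, and I would cite \cite[Lemma 5.3]{AMNS2021-M2AN} for the precise handling once the structure is in place.
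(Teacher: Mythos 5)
Your proposal is correct and follows essentially the same route as the paper, which does not reprove this result but defers it entirely to \cite[Lemma 5.3]{AMNS2021-M2AN}: that lemma is established exactly by the Lax--Milgram argument for well-posedness, the Strang-type splitting with the interpolant of Proposition~\ref{approx-virtual}, the elementwise polynomial insertion exploiting the $k$-consistency of Proposition~\ref{cons-stab-forms} for the $\H^2$-bound, and an Aubin--Nitsche duality argument (with the biharmonic regularity shift $\tilde s$) for the $\H^1$-bound. Your outline of the duality step, while left at the level of structure, is the standard and correct one.
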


In what follows, we will establish four technical lemmas involving the trilinear 
forms associated to transport/convection and the bilinear form associated to the buoyancy term;
these results will be useful in subsection~\ref{error:fully:scheme}. 

\begin{lemma}\label{boundBFh:two}
For all $\zeta_h;\varphi_h,\phi_h \in \Wh$, there exists $\CBFh>0$, independent of $h$, such that 
\begin{equation*}
|B^h_F(\zeta_h;\varphi_h,\phi_h)| \leq \CBFh \, \|\zeta_h\|_{2,\O} \|\varphi_h\|_{2,\O} \|\phi_h\|^{\frac{1}{2}}_{2,\O}\|\phi_h\|^{\frac{1}{2}}_{1,\O}.
\end{equation*}
\end{lemma}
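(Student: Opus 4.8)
The plan is to exploit the explicit definition of $B^h_F$ in \eqref{disc-formB} together with the stability of the $L^2$-projections and a Sobolev embedding to gain the half-power on $\phi_h$. Recall that
\[
B^h_F(\zeta_h;\varphi_h,\phi_h)=\sum_{\E\in\CT_h}\int_{\E}\bigl[(\PioK\Delta\zeta_h)(\PimunoK\curl\varphi_h)\bigr]\cdot\PimunoK\nabla\phi_h .
\]
First I would apply Hölder's inequality on each $\E$ with exponents $2$, $4$, $4$, putting $\PioK\Delta\zeta_h$ in $L^2(\E)$, $\PimunoK\curl\varphi_h$ in $\LL^4(\E)$ and $\PimunoK\nabla\phi_h$ in $\LL^4(\E)$. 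Summing over $\E$ and using a discrete Cauchy--Schwarz inequality reduces the bound to controlling $\|\PioK\Delta\zeta_h\|_{0,\O}$, $\|\PimunoK\curl\varphi_h\|_{\LL^4(\O)}$ and $\|\PimunoK\nabla\phi_h\|_{\LL^4(\O)}$.

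The key step is to estimate the $\LL^4$ factor attached to $\phi_h$. Since $L^2$-projections are stable in $L^2$, continuity of $\PioK$ and $\PimunoK$ gives $\|\PioK\Delta\zeta_h\|_{0,\E}\le\|\Delta\zeta_h\|_{0,\E}\le C|\zeta_h|_{2,\E}$ and likewise $\|\PimunoK\curl\varphi_h\|_{0,\E}\le C|\varphi_h|_{1,\E}$. For the $\LL^4$-norms I would like to use the Sobolev inequality \eqref{sobolev:ineq}, but it applies to $\HH^1_0(\O)$-functions, not to the elementwise polynomial $\PimunoK\nabla\phi_h$, which is discontinuous across edges. The clean way around this is: the piecewise polynomials $\PimunoK\curl\varphi_h$ and $\PimunoK\nabla\phi_h$ are $\LL^2$-projections of $\curl\varphi_h,\nabla\phi_h\in\LL^2(\O)$, and on star-shaped elements satisfying {\bf A1}--{\bf A2} one has the standard inverse-type/approximation bound $\|\boldsymbol{\Pi}^{k-1}_{\E}\bv\|_{\LL^4(\E)}\le C\|\bv\|_{\LL^4(\E)}$ together with, for the $\phi_h$-factor, the local interpolation estimate $\|\nabla\phi_h-\PimunoK\nabla\phi_h\|_{0,\E}\le Ch_\E|\phi_h|_{2,\E}$. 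Then I would write, globally,
\[
\|\PimunoK\nabla\phi_h\|_{\LL^4(\O)}\le C\|\nabla\phi_h\|_{\LL^4(\O)}
\]
whenever $\nabla\phi_h\in\LL^4(\O)$ — which holds because $\phi_h\in\H^2_0(\O)\subset W^{1,4}(\O)$ in two dimensions — and apply \eqref{sobolev:ineq} to $\bv=\nabla\phi_h\in\HH^1_0(\O)$ componentwise (using $\phi_h\in\H^2_0(\O)$ so that $\nabla\phi_h$ vanishes on $\Gamma$), obtaining $\|\nabla\phi_h\|_{\LL^4(\O)}\le C\|\nabla\phi_h\|_{1,\O}^{1/2}\|\nabla\phi_h\|_{0,\O}^{1/2}\le C\|\phi_h\|_{2,\O}^{1/2}\|\phi_h\|_{1,\O}^{1/2}$. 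For the $\varphi_h$-factor I only need the crude bound $\|\PimunoK\curl\varphi_h\|_{\LL^4(\O)}\le C\|\curl\varphi_h\|_{\LL^4(\O)}\le C\|\varphi_h\|_{2,\O}$ via the same Sobolev inequality with no interpolation in the exponent.

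Putting the three pieces together yields
\[
|B^h_F(\zeta_h;\varphi_h,\phi_h)|\le C\,\|\zeta_h\|_{2,\O}\,\|\varphi_h\|_{2,\O}\,\|\phi_h\|_{2,\O}^{1/2}\|\phi_h\|_{1,\O}^{1/2},
\]
which is the claim with $\CBFh=C$. The main obstacle, and the place where care is needed, is justifying the $L^4$-stability of the elementwise $\LL^2$-projection uniformly in $h$ and reconciling the global Sobolev inequality \eqref{sobolev:ineq} (valid on $\HH^1_0(\O)$) with the discontinuous projected field; this is handled by splitting $\PimunoK\nabla\phi_h=\nabla\phi_h-(\,\nabla\phi_h-\PimunoK\nabla\phi_h)$, bounding the smooth part by \eqref{sobolev:ineq} and the remainder by the local approximation estimate plus the inverse inequality \eqref{inverse:ineq}, noting that $h_\E\|\phi_h\|_{2,\E}\le C\|\phi_h\|_{1,\E}$ keeps the scaling consistent.
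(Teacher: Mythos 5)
Your proposal is correct and follows essentially the same route as the paper: Hölder with exponents $2$–$4$–$4$ on each element, $\L^2$- and $\L^4$-stability of the projections $\PioK$ and $\PimunoK$ (which the paper simply cites from the Navier--Stokes VEM reference), the embedding $\H^1(\O)\hookrightarrow\L^4(\O)$ for the $\varphi_h$ factor, and the Sobolev interpolation inequality \eqref{sobolev:ineq} applied to $\bv=\nabla\phi_h\in\HH^1_0(\O)$ to produce the half powers on $\phi_h$. The additional discussion you give on justifying the uniform $\L^4$-stability of the elementwise $\LL^2$-projection is a reasonable elaboration of a step the paper delegates to a citation, and does not change the argument.
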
  
\begin{proof}
We use the definition of the trilinear form $B_F^h(\cdot;\cdot,\cdot)$ (cf. \eqref{disc-formB}), the H\"{o}lder inequality, the continuity of the operators $\PioK$ and $\PimunoK$ with  respect to the
$\L^2$- and $\L^4$-norms (see \cite{BLV-NS18}) respectively, and the H\"{o}lder inequality for sequences, to obtain
\begin{equation*}
\begin{split}
B_F^h(\zeta_h;\varphi_h,\phi_h) 
& \leq \sum_{\E \in \CT_h} \|\PioK \Delta \zeta_h\|_{0,\E} \|\PimunoK \curl \varphi_h\|_{\L^{4}(\E)}\|\PimunoK \nabla \phi_h\|_{\L^{4}(\E)}\\
& \leq C \|\Delta \zeta_h\|_{0,\O} \|\curl \varphi_h\|_{\L^{4}(\O)} \|\nabla \phi_h\|_{\L^{4}(\O)}\\
& \leq C \|\Delta \zeta_h\|_{0,\O} \|\varphi_h\|_{2,\O} \|\nabla \phi_h\|_{\L^{4}(\O)}, 
\end{split}
\end{equation*} 
where we have used  the Sobolev inclusion $\H^1(\O) \hookrightarrow \L^4(\O)$. 
Now, applying the Sobolev inequality~\eqref{sobolev:ineq} with $\vb=\nabla \phi_h$
we obtain the desired result.
\end{proof}
\begin{lemma}\label{lemma-tec2}
For all $\zeta, \varphi, \phi \in \HdoO$, we have that 
\begin{equation*}
\begin{split}	
B_{F}^h(\varphi;\varphi,\phi)-B_{F}^h(\zeta;\zeta,\phi)
& = B_{F}^h(\varphi;\varphi-\zeta+\phi,\phi)+ B_{F}^h(\varphi-\zeta+\phi;\zeta,\phi)
	- B_{F}^h(\phi;\zeta,\phi).
	\end{split} 
	\end{equation*}
\end{lemma}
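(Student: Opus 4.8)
The plan is to verify the identity \eqref{lemma-tec2} by pure algebraic manipulation of the trilinear form $B_F^h(\cdot;\cdot,\cdot)$, exploiting its linearity in each of its three arguments separately. Recalling from \eqref{disc-formB} that $B_F^h(\zeta_h;\varphi_h,\phi_h)=\sum_{\E\in\CT_h}\int_{\E}\big[(\PioK\Delta\zeta_h)(\PimunoK\curl\varphi_h)\big]\cdot\PimunoK\nabla\phi_h$, and since each of $\PioK\Delta$, $\PimunoK\curl$ and $\PimunoK\nabla$ is a linear operator, the form $B_F^h$ is trilinear: linear in the first slot, linear in the second slot, and linear in the third slot. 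The third slot $\phi$ is held fixed throughout the identity, so I will only need linearity in the first two slots.

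First I would start from the left-hand side and insert $\varphi$ in the first slot of both terms by adding and subtracting: write $B_F^h(\zeta;\zeta,\phi)=B_F^h(\varphi;\zeta,\phi)-B_F^h(\varphi-\zeta;\zeta,\phi)$, so that
\[
B_F^h(\varphi;\varphi,\phi)-B_F^h(\zeta;\zeta,\phi)=B_F^h(\varphi;\varphi-\zeta,\phi)+B_F^h(\varphi-\zeta;\zeta,\phi).
\]
Then I would introduce the extra $+\phi$ terms in the second and first slots respectively, using linearity again: $B_F^h(\varphi;\varphi-\zeta,\phi)=B_F^h(\varphi;\varphi-\zeta+\phi,\phi)-B_F^h(\varphi;\phi,\phi)$ and $B_F^h(\varphi-\zeta;\zeta,\phi)=B_F^h(\varphi-\zeta+\phi;\zeta,\phi)-B_F^h(\phi;\zeta,\phi)$. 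Adding these and observing that $B_F^h(\varphi;\phi,\phi)=0$ by the skew-symmetry-type property recorded in Lemma~\ref{lemma:disc:form} (namely $B_F^h(\zeta_h;\phi_h,\phi_h)=0$), the term $B_F^h(\varphi;\phi,\phi)$ drops out and one lands exactly on the claimed right-hand side $B_F^h(\varphi;\varphi-\zeta+\phi,\phi)+B_F^h(\varphi-\zeta+\phi;\zeta,\phi)-B_F^h(\phi;\zeta,\phi)$.

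The only subtlety I anticipate is making sure the identity is being applied in the right space: the statement is for $\zeta,\varphi,\phi\in\HdoO$ rather than in $\Wh$, but as noted in the text just after the definition of the global forms, $B_F^h(\cdot;\cdot,\cdot)$ is immediately extendable to all of $\HdoO$ (the projections $\PioK\Delta$, $\PimunoK\curl$, $\PimunoK\nabla$ act on arbitrary $\H^2$-functions elementwise), and its trilinearity and the vanishing property $B_F^h(\varphi;\phi,\phi)=0$ persist on that larger space. Hence no regularity obstruction arises. There is essentially no hard part here — the whole proof is two or three lines of adding and subtracting terms and one application of the vanishing-on-the-diagonal property; the main thing to be careful about is bookkeeping the signs when inserting and removing the $\pm(\varphi-\zeta)$ and $\pm\phi$ terms in the correct argument slots.
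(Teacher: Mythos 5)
Your proof is correct and follows essentially the same route as the paper, which simply invokes "adding and subtracting suitable terms, trilinearity, and the skew-symmetry property $B_F^h(\cdot;\phi,\phi)=0$"; you have merely written out the bookkeeping explicitly, and your remark about the extendability of $B_F^h$ to all of $\H_0^2(\O)$ matches the paper's own observation following the definition of the global forms.
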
	
\begin{proof}
The proof follows by adding and subtracting
suitable terms, and using the trilineality and skew-symmetric properties of 
form $B_{F}^h(\cdot;\cdot,\cdot)$. 
\end{proof}

Next lemmas give us the measure 
of the variational crime in the discretization of 
the trilinear forms $B_{F}(\cdot;\cdot,\cdot)$ and 
$B_{\skew}(\cdot;\cdot,\cdot)$ and 
the bilinear form $C(\cdot,\cdot)$.
\begin{lemma}\label{lemma:crimenBF}
Let $\varphi(t)\in \H^2_0(\O) \cap \H^{2+s}(\O)$, with $\frac{1}{2}<s \leq k-1$, for almost all $t \in (0,T)$.
Then, there exists  $C>0$, independent of mesh size $h$, such that
\begin{equation*}
|B_F(\varphi;\varphi,\phi_h)-B_F^h(\varphi;\varphi,\phi_h)|
\leq Ch^{s}\big(\|\varphi\|_{1+s,\O}+\|\varphi\|_{2,\O} \big)
\|\varphi\|_{2+s,\O}\|\phi_h\|_{2,\O} \qquad\forall\phi_h\in \Wh. 
\end{equation*}
\end{lemma}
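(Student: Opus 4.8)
The plan is to estimate the consistency error by splitting it elementwise and comparing the continuous integrand with the polynomial-projected one. Recall that on each $\E$,
\[
B_F^{\E}(\varphi;\varphi,\phi_h)=\int_{\E}\Delta\varphi\,\curl\varphi\cdot\nabla\phi_h,
\qquad
B_F^{h,\E}(\varphi;\varphi,\phi_h)=\int_{\E}\bigl(\PioK\Delta\varphi\bigr)\bigl(\PimunoK\curl\varphi\bigr)\cdot\PimunoK\nabla\phi_h.
\]
First I would write the difference of the two integrands as a telescoping sum in which one factor is replaced by its projection at a time:
\[
\Delta\varphi\,\curl\varphi\cdot\nabla\phi_h-\bigl(\PioK\Delta\varphi\bigr)\bigl(\PimunoK\curl\varphi\bigr)\cdot\PimunoK\nabla\phi_h
=:T_1+T_2+T_3,
\]
where $T_1$ carries the factor $(\Delta\varphi-\PioK\Delta\varphi)$, $T_2$ carries $(\curl\varphi-\PimunoK\curl\varphi)$ and $T_3$ carries $(\nabla\phi_h-\PimunoK\nabla\phi_h)$, the remaining factors being suitable (projected or unprojected) quantities. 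Using the self-adjointness of the $\L^2$-projections one can in fact move the projection onto the smooth factors in $T_1$ and $T_2$, so that in those two terms the ``error factor'' acts against a polynomial; this is the standard trick that lets one use the best polynomial approximation of $\curl\varphi$ and $\Delta\varphi$ directly. For $T_3$, since $\phi_h$ is only virtual, one keeps $(\nabla\phi_h-\PimunoK\nabla\phi_h)$ and bounds it by $\|\nabla\phi_h-\PimunoK\nabla\phi_h\|_{0,\E}\le Ch_\E\,|\phi_h|_{2,\E}$ (best approximation plus stability of $\PimunoK$), while the other two factors are controlled in $\L^\infty$ or $\L^4$ using that $\varphi\in\H^{2+s}(\O)\hookrightarrow\W^{1,\infty}$-type embeddings on the plane, or more safely in $\L^4$ via \eqref{sobolev:ineq}.

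Concretely, for $T_1$ I would use $\int_\E(\Delta\varphi-\PioK\Delta\varphi)\,q=\int_\E(\Delta\varphi-\PioK\Delta\varphi)(q-\Pi^{0}q)$ for the polynomial $q=(\PimunoK\curl\varphi)\cdot\PimunoK\nabla\phi_h$ is not itself low degree, so instead I would simply bound
\[
|T_1\text{-term}|\le \|\Delta\varphi-\PioK\Delta\varphi\|_{0,\E}\,\|\PimunoK\curl\varphi\|_{\L^4(\E)}\,\|\PimunoK\nabla\phi_h\|_{\L^4(\E)},
\]
and use Proposition~\ref{app-pol} together with $\H^{2+s}\supset\H^{2}$ regularity of $\varphi$ to get $\|\Delta\varphi-\PioK\Delta\varphi\|_{0,\E}\le Ch_\E^{s}|\varphi|_{2+s,\E}$ (here one needs $k\ge 2$ so that $\P_{k-2}$ contains at least constants, and $s\le k-1$ so the rate is not saturated); the two $\L^4$ factors are bounded by $C\|\varphi\|_{2,\E}$ and $C\|\phi_h\|_{2,\E}$ via continuity of the projections in $\L^4$ (as in Lemma~\ref{boundBFh:two}) and the Sobolev embedding $\H^1(\E)\hookrightarrow\L^4(\E)$. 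For $T_2$ the same structure gives a factor $\|\curl\varphi-\PimunoK\curl\varphi\|_{\L^4(\E)}\le Ch_\E^{s}|\varphi|_{1+s,\E}$ (again using $\PimunoK=\boldsymbol\Pi_\E^{k-1}$ with $k-1\ge s$, plus an $\L^4$ best-approximation estimate), times $\|\PioK\Delta\varphi\|_{0,\E}\le C\|\varphi\|_{2,\E}$ and $\|\PimunoK\nabla\phi_h\|_{\L^4(\E)}\le C\|\phi_h\|_{2,\E}$. For $T_3$ the factor $\|\nabla\phi_h-\PimunoK\nabla\phi_h\|_{0,\E}\le Ch_\E|\phi_h|_{2,\E}$ is multiplied by $\|\Delta\varphi\|_{0,\E}$ and $\|\curl\varphi\|_{\L^\infty(\E)}\le C\|\varphi\|_{2+s,\E}$ (here the Sobolev embedding $\H^{1+s}(\O)\hookrightarrow\L^\infty(\O)$ for $s>1/2$ in two dimensions is what forces the hypothesis $s>\tfrac12$), giving a bound $Ch_\E\|\varphi\|_{2+s,\E}\|\varphi\|_{2,\E}\|\phi_h\|_{2,\E}\le Ch_\E^{s}\cdots$ since $h_\E\le h\le 1$ and $s\le k-1$. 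Summing over $\E\in\CT_h$, applying the discrete Cauchy--Schwarz inequality for sequences and absorbing all element norms into the global norms yields
\[
|B_F(\varphi;\varphi,\phi_h)-B_F^h(\varphi;\varphi,\phi_h)|\le Ch^{s}\bigl(\|\varphi\|_{1+s,\O}+\|\varphi\|_{2,\O}\bigr)\|\varphi\|_{2+s,\O}\|\phi_h\|_{2,\O}.
\]

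The main obstacle I anticipate is the bookkeeping of which factor is ``smooth'' (and hence handled by polynomial best-approximation at rate $h^s$) versus which is ``virtual'' (and only enjoys the first-order estimate $h_\E|\phi_h|_{2,\E}$ via stability of the projections), and making sure the three pieces $T_1,T_2,T_3$ are split so that in each one exactly one factor carries the approximation error while the other two are bounded by $\|\varphi\|_{2,\O}$ and $\|\phi_h\|_{2,\O}$ respectively — in particular that no term needs two simultaneous error factors. A secondary technical point is justifying the $\L^4$ best-approximation estimate $\|\curl\varphi-\PimunoK\curl\varphi\|_{\L^4(\E)}\le Ch_\E^{s}|\varphi|_{1+s,\E}$; this follows by inserting $\varphi_\pi$ from Proposition~\ref{app-pol}, using $\PimunoK(\curl\varphi_\pi)=\curl\varphi_\pi$ on polynomials of sufficient degree, an inverse inequality to pass from $\L^2$ to $\L^4$ on the polynomial part, and the embedding $\H^{s}\hookrightarrow\L^4$ ($s\ge 1/2$) on the remainder — which is exactly where the regularity threshold $s>1/2$ re-enters.
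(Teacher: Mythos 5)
The paper does not actually prove this lemma — it only cites \cite{AMNS2021-M2AN} — so I am comparing your argument against the technique the paper displays in the proof of the companion result, Lemma~\ref{lemma:crimen}. Your overall strategy (telescoping the integrand so that each of three terms carries exactly one projection error, bounding the remaining factors via H\"older and Sobolev embeddings, and summing with the discrete H\"older inequality) is the standard and correct route, and your treatment of the term carrying $\Delta\varphi-\PioK\Delta\varphi$ is sound: it yields the $\|\varphi\|_{2,\O}\|\varphi\|_{2+s,\O}$ contribution at the full rate $h^{s}$.

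There is, however, a genuine gap in your third term. You bound the factor carrying the virtual function by $\|\nabla\phi_h-\PimunoK\nabla\phi_h\|_{0,\E}\le Ch_\E|\phi_h|_{2,\E}$ and then assert $Ch_\E\le Ch_\E^{s}$ ``since $s\le k-1$''. That inequality requires $s\le 1$; it fails for $s>1$, which the lemma permits as soon as $k\ge 3$. Since $\phi_h$ has no regularity beyond $\H^2$, no best-approximation estimate applied to $\nabla\phi_h$ can do better than first order, so your argument only proves the statement with rate $h^{\min\{s,1\}}$. The missing idea is the orthogonality trick the paper itself uses for the term $I_1$ in the proof of Lemma~\ref{lemma:crimen}: because $\nabla\phi_h-\PimunoK\nabla\phi_h$ is $\L^2$-orthogonal to $\PP_{k-1}(\E)$, one may insert $\PimunoK(\Delta\varphi\,\curl\varphi)$ for free, so that the $h^{s}$ approximation burden falls on the \emph{smooth} product $\Delta\varphi\,\curl\varphi$, while the virtual factor is estimated merely by stability, $\|\nabla\phi_h-\PimunoK\nabla\phi_h\|_{0,\E}\le\|\nabla\phi_h\|_{0,\E}$. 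A secondary, fixable slip is the claimed $\L^4$ estimate $\|\curl\varphi-\PimunoK\curl\varphi\|_{\L^4(\E)}\le Ch_\E^{s}|\varphi|_{1+s,\E}$: in two dimensions, passing from $\L^2$ to $\L^4$ on an element costs a factor $h_\E^{-1/2}$, so with only $\curl\varphi\in\HH^{s}(\E)$ the attainable rate is $h_\E^{s-1/2}$. You can repair this either by measuring that error factor in $\L^2$ (rate $h^{s}|\varphi|_{1+s,\E}$) and shifting the $\L^4$ burden onto $\PioK\Delta\varphi$, which is controlled by $\|\varphi\|_{2+s,\O}$ precisely because $s>\tfrac12$ — this reproduces the $\|\varphi\|_{1+s,\O}\|\varphi\|_{2+s,\O}$ term in the statement — or by using the full $\HH^{1+s}$ regularity of $\curl\varphi$ to obtain $h^{s+1/2}$ in $\L^4$.
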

\begin{proof}
The proof has been established in \cite[Lemma 5.4]{AMNS2021-M2AN}.
\end{proof}
\begin{lemma}\label{lemma:crimen}
Let $\frac{1}{2}<\gamma \leq \min\{k-1,\ell\}$. Assume that  $\varphi(t)\in \H^2_0(\O) \cap \H^{2+\gamma}(\O)$ and  $v(t)\in \H^1_0(\O) \cap \H^{1+\gamma}(\O)$, for almost all $t \in (0,T)$.
Then, there exists  $C>0$, independent of mesh size $h$, such that, a.e. $t \in (0,T)$,
\begin{equation} \label{prop:Bskew-Bskewh} 
|B_{\skew}(\varphi;v,w_h)-B_{\skew}^h(\varphi;v,w_h)|
\leq Ch^{\gamma}\|\varphi\|_{2+\gamma,\O}\|v\|_{1+\gamma,\O}\|w_h\|_{1,\O}
\qquad \forall w_h\in \Hh. 
\end{equation}
Moreover, assume that $\gb(t) \in\WW^{\gamma}_{\infty}(\O)$, for almost all $t \in (0,T)$. Then, a.e. $t \in (0,T)$,
\begin{equation}
|C(v,\phi_h)-C^h(v,\phi_h)|\leq Ch^{\gamma}\|\gb\|_{\W^{\gamma}_{\infty}(\O)}\|v\|_{1+\gamma,\O}\|\phi_h\|_{1,\O}\qquad \qquad 
\forall \phi_h\in \Wh. \label{prop:C-Ch}
\end{equation}
\end{lemma}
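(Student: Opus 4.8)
\textbf{Proof strategy for Lemma~\ref{lemma:crimen}.}
The plan is to treat the two estimates separately, but both follow the same ``consistency'' pattern: split the difference between the continuous and discrete forms into a sum of local terms, each of which involves either a projection error on one factor or a replacement of the true integrand by a polynomial projection. For \eqref{prop:Bskew-Bskewh}, I would first pass from $B_{\skew}$ to $B_T$ on both sides using the skew-symmetric splitting \eqref{BT-cont-skew}, so it suffices to bound $|B_T(\varphi;v,w_h)-B_T^h(\varphi;v,w_h)|$ (and the analogous term with $v,w_h$ swapped, which is identical up to relabeling). On each element $\E$ I would write
\[
B_T^{\E}(\varphi;v,w_h)-B_T^{h,\E}(\varphi;v,w_h)
=\int_{\E}(\curl\varphi\cdot\nabla v)w_h - \int_{\E}\bigl(\PimunoK\curl\varphi\cdot\PiellK\nabla v\bigr)\Pi_{\E}^{\ell-1}w_h,
\]
and insert and subtract intermediate quantities so as to isolate three error contributions: one controlled by $\|\curl\varphi-\PimunoK\curl\varphi\|_{0,\E}$, one by $\|\nabla v-\PiellK\nabla v\|_{0,\E}$, and one by $\|w_h-\Pi_{\E}^{\ell-1}w_h\|_{0,\E}$. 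The first two are estimated by Proposition~\ref{app-pol} (polynomial approximation of $\curl\varphi\in\H^{1+\gamma}(\E)^2$ and $\nabla v\in\H^{\gamma}(\E)^2$, giving factors $h_\E^{\gamma}|\varphi|_{2+\gamma,\E}$ and $h_\E^{\gamma}|v|_{1+\gamma,\E}$); the third is a bit more delicate since $w_h$ is only $\H^1$, so I would instead move the projection onto the other factors (using the definition of $\Pi_{\E}^{\ell-1}$ as an $\L^2$-projection to subtract a polynomial from $\curl\varphi\cdot\nabla v$ at no cost) and then bound $\|w_h\|_{0,\E}$ or $\|w_h\|_{1,\E}$ directly. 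The remaining factors are handled with H\"older (with exponents $4,4,2$ on the triple product, as in Lemma~\ref{boundBFh:two}), the Sobolev embedding $\H^1(\O)\hookrightarrow\L^4(\O)$, and \eqref{sobolev:ineq}; finally one sums over $\E$ with Cauchy--Schwarz for sequences to replace broken seminorms by global norms, arriving at the claimed bound with $\|w_h\|_{1,\O}$ on the right.

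For \eqref{prop:C-Ch}, the argument is shorter. On each $\E$,
\[
C^{\E}(v,\phi_h)-C^{h,\E}(v,\phi_h)=\int_{\E}\gb\,v\cdot\curl\phi_h-\int_{\E}\gb\,\Pi_{\E}^{\ell-1}v\cdot\PimunoK\curl\phi_h.
\]
I would add and subtract $\int_{\E}\gb\,\Pi_{\E}^{\ell-1}v\cdot\curl\phi_h$, so the difference splits into a term with factor $\|v-\Pi_{\E}^{\ell-1}v\|_{0,\E}$ (bounded by $h_\E^{\gamma}|v|_{\gamma,\E}\le h_\E^{\gamma}\|v\|_{1+\gamma,\E}$ via Proposition~\ref{app-pol}, using $\gamma\le\ell$ so that $\Pi_{\E}^{\ell-1}$ reproduces polynomials of degree up to $\ell-1$) and a term with factor $\|\curl\phi_h-\PimunoK\curl\phi_h\|_{0,\E}$. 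For the second term I exploit that $\PimunoK$ is the $\L^2$-projection: since $\gb\,\Pi_{\E}^{\ell-1}v$ is not itself a polynomial, I instead subtract from $\gb$ a suitable polynomial approximation (using $\gb\in\WW^{\gamma}_{\infty}(\E)$, which gives $\|\gb-\gb_\pi\|_{\L^\infty(\E)}\le Ch_\E^{\gamma}\|\gb\|_{\W^{\gamma}_\infty(\E)}$ — here one needs a Bramble--Hilbert / best-approximation bound in $\W^{\gamma}_\infty$, or one simply absorbs the non-polynomial part of $\gb\,\Pi_{\E}^{\ell-1}v$ into the $\L^2$ estimate) and then use boundedness of $\PimunoK$ in $\L^2$ together with $\|\curl\phi_h\|_{0,\E}\le\|\phi_h\|_{1,\E}$. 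Summing over $\E$ with Cauchy--Schwarz yields \eqref{prop:C-Ch}.

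\textbf{Main obstacle.} The delicate point in both estimates is that the ``test'' factors ($w_h$ in the first bound, $\phi_h$ in the second) are only controlled in $\H^1$, not $\H^2$ or higher, so one cannot afford a projection error on them; the trick is to shift every projection error onto the smooth factors $\varphi$, $v$, $\gb$ by using that $\PimunoK$, $\PiellK$, $\Pi_{\E}^{\ell-1}$ are $\L^2$-orthogonal projections and hence annihilate the polynomial part of whatever they multiply against. Getting the bookkeeping right — deciding which polynomial to subtract from which factor so that every leftover term carries an $h_\E^{\gamma}$ and the low-regularity factor appears only in its natural norm — is the part that requires care; once the splitting is chosen correctly the rest is H\"older, Sobolev embedding, \eqref{sobolev:ineq}, and summation over elements. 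A secondary technical wrinkle is justifying the $\W^{\gamma}_\infty$ approximation estimate for $\gb$ when $\gamma$ is non-integer, which one obtains from the standard interpolation/Bramble--Hilbert theory on star-shaped domains satisfying ${\bf A1}$--${\bf A2}$.
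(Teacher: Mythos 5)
Your overall strategy is the paper's: split the consistency error element by element, shift every projection error onto the smooth factors by exploiting that $\Pi_{\E}^{\ell-1}$, $\PiellK$, $\PimunoK$ are $\L^2$-orthogonal projections, then conclude with H\"older, Sobolev embeddings, polynomial approximation and summation over elements. Two specific steps, however, would not go through as written. First, the two halves of the skew-symmetric form are \emph{not} identical up to relabeling: in $B_T(\varphi;w_h,v)$ the gradient falls on $w_h$, which is only in $\H^1$, so the delicate contribution is $\nabla w_h-\PiellK\nabla w_h$ rather than $w_h-\Pi_{\E}^{\ell-1}w_h$, and your list of three error contributions (on $\curl\varphi$, $\nabla v$, $w_h$) only covers the first half. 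The paper treats the second half by orthogonality on the product, writing $\int_{\E} v\,\curl\varphi\cdot(\nabla w_h-\PiellK\nabla w_h)=\int_{\E}\big(v\,\curl\varphi-\PiellK(v\,\curl\varphi)\big)\cdot(\nabla w_h-\PiellK\nabla w_h)$ and then using $|v\,\curl\varphi|_{\gamma,\O}\le C\|v\|_{\W_4^{\gamma}(\O)}\|\curl\varphi\|_{\W_4^{\gamma}(\O)}\le C\|v\|_{1+\gamma,\O}\|\varphi\|_{2+\gamma,\O}$; this is precisely where the norm $\|\varphi\|_{2+\gamma,\O}$ in \eqref{prop:Bskew-Bskewh} originates (the first half needs only $\|\varphi\|_{1+\gamma,\O}$), so it cannot be dispatched by symmetry. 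Relatedly, your plan to subtract $\Pi_{\E}^{\ell-1}(\curl\varphi\cdot\nabla v)$ "at no cost" for the $w_h-\Pi_{\E}^{\ell-1}w_h$ term involves $|\curl\varphi\cdot\nabla v|_{\gamma-1}$, which is not meaningful for $\gamma<1$; the paper splits the cases $\tfrac12<\gamma\le 1$ (direct bound $\|w_h-\Pi_{\E}^{\ell-1}w_h\|_{0,\E}\le Ch_{\E}|w_h|_{1,\E}$) and $1<\gamma\le\ell$ (orthogonality as you describe).

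Second, for \eqref{prop:C-Ch} your device of replacing $\gb$ by a polynomial $\gb_\pi$ leaves the term $\int_{\E}\gb_\pi\,\Pi_{\E}^{\ell-1}v\cdot(\curl\phi_h-\PimunoK\curl\phi_h)$, which is killed by orthogonality only if $\deg(\gb_\pi)+\ell-1\le k-1$; under the stated hypotheses this can fail (e.g.\ $k=\ell=3$, $\gamma=2$ forces $\deg(\gb_\pi)\ge 1$ but allows at most degree $0$, and $\ell>k$ is not excluded), and the fallback of "absorbing" that term into an $\L^2$ estimate yields no power of $h$ at all, since $\curl\phi_h-\PimunoK\curl\phi_h$ is only $O(1)$ in $\L^2$. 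The paper's fix is to project the whole product: subtract $\PimunoK(\gb v)$, legitimate by orthogonality of $\curl\phi_h-\PimunoK\curl\phi_h$, which gives $h^{\gamma}|\gb v|_{\gamma,\E}\le Ch^{\gamma}\|\gb\|_{\W^{\gamma}_{\infty}(\E)}\|v\|_{\gamma,\E}$ using $\gamma\le k-1$, while the remaining term $\int_{\E}\gb(v-\Pi_{\E}^{\ell-1}v)\cdot\PimunoK\curl\phi_h$ is handled exactly as you propose. Both repairs are consistent with your own "shift the error onto the smooth factor" principle, but as stated the proposal glosses over the one half of the skew form that genuinely differs and relies on a cancellation for the buoyancy term that need not hold.
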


\begin{proof}
To prove estimate~\eqref{prop:Bskew-Bskewh},
 we split the consistency error as follow:
\begin{equation}\label{two:terms}
B_{\skew}(\varphi;v,w_h)-B_{\skew}^h(\varphi;v,w_h) 
= \frac{1}{2}\left(\beta_1(w_h) +\beta_2(w_h) \right),
\end{equation}
 where 
\begin{equation*}
\beta_1(w_h) := \sum_{\E \in \CT_h} \left(B^{\E}_T(\varphi; v, w_h )
	-B^{h,\E}_T(\varphi; v, w_h )  \right)\quad \text{and} \quad 
	\beta_2(w_h) := \sum_{\E \in \CT_h} \left(B^{\E}_T(\varphi; w_h, v )
	-B^{h,\E}_T(\varphi; w_h, v ) \right).
\end{equation*}

In what follows, we will establish bounds for the terms $\beta_1(w_h)$ and $\beta_2(w_h)$. 
Indeed, for the term $\beta_1(w_h)$ we have 
\begin{equation}\label{beta1}
\begin{split}
\beta_1(w_h) &= \sum_{\E \in \CT_h}\int_{\E} (\curl \varphi \cdot \nabla v) w_h -  
\int_{\E} (\PimunoK\curl \varphi \cdot \PiellK\nabla v) \Pi_{\E}^{\ell-1} w_h\\
& = \sum_{\E \in \CT_h}\int_{\E} (\curl \varphi \cdot \nabla v) (w_h-\Pi_{\E}^{\ell-1} w_h) +
\sum_{\E \in \CT_h}\int_{\E} \left(\curl \varphi \cdot (\nabla v- \PiellK \nabla v)\right) \Pi_{\E}^{\ell-1} w_h\\
& \quad + \sum_{\E \in \CT_h} \int_{\E} \left((\curl \varphi- \PimunoK \curl \varphi) \cdot  \PiellK \nabla v\right) \Pi_{\E}^{\ell-1} w_h\\
&=: T_1 +T_2 +T_3.	
\end{split}
\end{equation}

In order to bound the terms $T_1$, first we consider the case $1/2<\gamma \leq 1$. 
Then, by using approximation property of $\Pi_{\E}^{\ell-1}$ and the H\"{o}lder inequality,  it follows
\begin{equation*}
\begin{split}
T_1 &\leq \sum_{\E \in \CT_h} \|\curl \varphi\|_{\L^4(\E)} \|\nabla \varphi\|_{\L^4(\E)} 
\|w_h-\Pi_{\E}^{\ell-1} w_h\|_{0,\E}\\
&\leq C\sum_{\E \in \CT_h} \|\curl \varphi\|_{\L^4(\E)} \|\nabla \varphi\|_{\L^4(\E)} 
h_{\E}|w_h|_{1,\E}\\
& \leq  Ch\|\varphi\|_{1+\gamma,\O} \|v\|_{1+\gamma,\O} \|w_h\|_{1,\O}.		
\end{split}
\end{equation*}

On the other hand, for the case $1<\gamma \leq \ell$, we use orthogonality 
and approximation properties  of $\Pi_{\E}^{\ell-1}$, 
the H\"{o}lder inequality (for sequences), to obtain  
\begin{equation*}
\begin{split}
T_1 &
= \sum_{\E \in \CT_h}\int_{\E} (\curl \varphi \cdot \nabla v- \Pi_{\E}^{\ell-1}(\curl \varphi \cdot \nabla v)) (w_h-\Pi_{\E}^{\ell-1} w_h)
\leq Ch^{\gamma}|\curl \varphi \cdot \nabla v|_{\gamma-1,\O}\|w_h\|_{1,\O}.
\end{split}
\end{equation*}
Then,  applying the  H\"{o}lder inequality and Sobolev embedding, we get
\begin{equation*}
\begin{split}
|\curl \varphi \cdot \nabla v|_{\gamma-1,\O} &\leq C \|\curl \varphi\|_{\W_{4}^{\gamma-1}(\O)} \|\nabla v\|_{\W_{4}^{\gamma-1}(\O)} 
\leq C \|\varphi\|_{1+\gamma,\O} \|v\|_{1+\gamma,\O}.
\end{split}
\end{equation*}

Collecting the above inequalities, for $\frac{1}{2}< \gamma \leq \ell$, we have
\begin{equation}\label{T1-beta1}
T_1 \leq  Ch^{\gamma}\|\varphi\|_{1+\gamma,\O} \|v\|_{1+\gamma,\O} \|w_h\|_{1,\O}.		
\end{equation}

Now, for the term $T_2$ we proceed as follows. First, we apply the H\"{o}lder inequality, 
then by using stability and approximation properties of the $\L^2$-projectors, 
Sobolev embedding and  the H\"{o}lder inequality for sequences, we get
\begin{equation}\label{T2-beta1}
\begin{split}
T_2 & \leq \sum_{\E \in \CT_h} \|\curl \varphi\|_{\L^4(\E)} \| \nabla v- \PiellK \nabla v\|_{0,\E} \|\Pi^{\ell-1}_{\E} w_h \|_{\L^4(\E)}
 \leq Ch^\gamma \|\varphi\|_{1+\gamma,\O}\|v\|_{1+\gamma,\O} \|w_h\|_{1,\O}.
\end{split}
\end{equation}
For the term $T_3$, we follow similar arguments,  to obtain
\begin{equation}\label{T3-beta1}
\begin{split}
T_3 \leq Ch^{\gamma} \|\varphi\|_{1+\gamma,\O}\|v\|_{1+\gamma,\O} \|w_h\|_{1,\O}.
	\end{split}
\end{equation} 
From the bounds \eqref{beta1}, \eqref{T1-beta1}, \eqref{T2-beta1} and 
\eqref{T3-beta1}, we conclude that 
\begin{equation}\label{final:beta1}
\beta_1(w_h) \leq Ch^{\gamma}\|\varphi\|_{1+\gamma,\O}\|v\|_{1+\gamma,\O} \|w_h\|_{1,\O}.	
\end{equation}
Now, we will focus on the term $\beta_2(w_h)$. 
To estimate this term, first we add and 
subtract suitable expressions to obtain 
\begin{equation*}
\begin{split}
\beta_2(w_h) &= \sum_{\E \in \CT_h}\int_{\E} (\curl \varphi \cdot \nabla w_h) v -  
\int_{\E} (\PimunoK\curl \varphi \cdot \PiellK \nabla w_h) \Pi_{\E}^{\ell-1} v\\
& = \sum_{\E \in \CT_h}\int_{\E} v(\curl \varphi) \cdot  (\nabla w_h-\PiellK \nabla w_h) 
+ \sum_{\E \in \CT_h} \int_{\E} \left(\curl \varphi- \PimunoK \curl \varphi\right) \cdot  v \PiellK \nabla w_h\\
& \quad +\sum_{\E \in \CT_h}\int_{\E} \left(\PimunoK \curl \varphi \cdot  \PiellK \nabla w_h\right) (v-\Pi_{\E}^{\ell-1} v)\\
&=: I_1 +I_2 +I_3.	
	\end{split}
\end{equation*}
Applying orthogonality and approximation properties of $\PiellK$,  we have 
\begin{equation*}
	\begin{split}
I_1&= \sum_{\E \in \CT_h}\int_{\E} (v(\curl \varphi)  - \PiellK(v(\curl \varphi) )) \cdot  ( \nabla w_h-\PiellK \nabla w_h)\\
&\leq C\sum_{\E \in \CT_h} h_{\E}^{\gamma}|v(\curl \varphi)|_{\gamma,\E} |w_h|_{1,\E}
\leq Ch^{\gamma}|v(\curl \varphi)|_{\gamma,\O}\|w_h\|_{1,\O}.
	\end{split}
\end{equation*}
Then, employing the  H\"{o}lder inequality and  Sobolev embedding, we get
\begin{equation*}
\begin{split}
|v(\curl \varphi)|_{\gamma,\O} &\leq C  \|v\|_{\W_{4}^{\gamma}(\O)}\|\curl \varphi\|_{\W_{4}^{\gamma}(\O)} 
\leq C \|\varphi\|_{2+\gamma,\O} \|v\|_{1+\gamma,\O}.
\end{split}
\end{equation*}
From the two bounds above, we obtain  
\begin{equation*}
I_1 \leq Ch^{\gamma}\|\varphi\|_{2+\gamma,\O} \|v\|_{1+\gamma,\O}\|w_h\|_{1,\O}.	
\end{equation*}
The terms $I_2$ and $I_3$ can be estimated using similar arguments.  We conclude that
\begin{equation}\label{final:beta2}
\beta_2(w_h) \leq Ch^{\gamma}\|\varphi\|_{2+\gamma,\O}\|v\|_{1+\gamma,\O} \|w_h\|_{1,\O}.	
\end{equation}

The proof  of \eqref{prop:Bskew-Bskewh} follows from~\eqref{two:terms}, \eqref{final:beta1} and \eqref{final:beta2}.

Next, we will prove property \eqref{prop:C-Ch}. Let $\phi_h \in \Wh$, 
then adding and subtracting the term  $\gb v \cdot \PimunoK \curl \phi_h$ 
and by using orthogonality and approximations properties of projection 
$\PimunoK$, we have 
\begin{equation*}
\begin{split}
C(v,\phi_h)-C^h(v,\phi_h) 
&= \sum_{\E \in \CT_h} \int_{\E} (\gb v-\PimunoK(\gb v)) \cdot (\curl \phi_h -   \PimunoK\curl \phi_h) 
+ \int_{\E} \gb (v- \Pi_{\E}^{\ell-1} v)\cdot  \PimunoK\curl \phi_h  \\ 
&\leq C\sum_{\E \in \CT_h} (h_{\E}^{\gamma} |\gb v|_{\gamma,\E} \|\curl \phi_h\|_{0,\E} 
+ h_{\E}^{\gamma}\|\gb\|_{\L^{\infty}(\E)} \|v\|_{\gamma,\E}  \|\curl \phi_h\|_{0,\E})\\
&\leq Ch^{\gamma}\|\gb\|_{\W^{\gamma}_{\infty}(\O)}\|v\|_{\gamma,\O}\|\phi_h\|_{1,\O},
\end{split}
\end{equation*} 
where we have used the H\"{o}lder inequality. The proof is complete.
\end{proof}

We finish this subsection recalling a discrete Gronwall inequality, which will be useful to derive the error estimate of the fully-discrete virtual scheme~\eqref{eq:VEM2}. 
\begin{lemma}\label{discrete:gronwall}
Let $D\geq 0$, $a_j$, $b_j$, $c_j$ and $\lambda_j$ be non negative numbers for any integer $j \geq 0$, such that 
\begin{equation*}
a_n+ \Delta t \sum_{j=0}^n b_j \leq \Delta t \sum_{j=0}^n \lambda_j a_j+ \Delta t \sum_{j=0}^n c_j+D,  \quad n \geq 0.
\end{equation*}
Suppose that $\Delta t \lambda_j <1$ for all $j$, and set $\sigma_j:=(1-\Delta t \lambda_j)^{-1}$. 
Then, the following bound holds
\begin{equation*}
a_n+ \Delta t \sum_{j=0}^n b_j \leq \exp \Big( \Delta t \sum_{j=0}^n \sigma_j \lambda_j \Big) \Big( \Delta t \sum_{j=0}^n c_j+D\Big).
\end{equation*}
\end{lemma}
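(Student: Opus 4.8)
The plan is to reduce the asserted exponential bound to a telescoping product identity together with the elementary scalar inequality $\ln u\le u-1$. It is convenient to abbreviate $A_n:=a_n+\Delta t\sum_{j=0}^n b_j$ and $B_n:=\Delta t\sum_{j=0}^n c_j+D$, so that $B_n$ is nondecreasing in $n$, the numbers $\sigma_j=(1-\Delta t\lambda_j)^{-1}$ satisfy $\sigma_j\ge 1$ and $\sigma_j(1-\Delta t\lambda_j)=1$, and the hypothesis reads $A_n\le\Delta t\sum_{j=0}^n\lambda_j a_j+B_n$ for every $n\ge0$.

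First I would isolate the $j=n$ term on the right-hand side of the hypothesis and bound $a_j\le A_j$ for $j\le n$, obtaining $A_n-\Delta t\lambda_n a_n\le\Delta t\sum_{j=0}^{n-1}\lambda_j A_j+B_n$. Since $0<1-\Delta t\lambda_n\le1$ and $b_j\ge0$, the left-hand side satisfies $A_n-\Delta t\lambda_n a_n=(1-\Delta t\lambda_n)a_n+\Delta t\sum_{j=0}^n b_j\ge(1-\Delta t\lambda_n)A_n$; this is the one place where one must be careful to keep the whole dissipation term $\Delta t\sum_{j=0}^n b_j$ on the left. Multiplying by $\sigma_n>0$ then yields
\[
A_n\le\sigma_n\Big(\Delta t\sum_{j=0}^{n-1}\lambda_j A_j+B_n\Big),\qquad n\ge0,
\]
with the convention that the empty sum vanishes.

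Next I would prove by induction on $n$ that $A_n\le B_n\prod_{j=0}^n\sigma_j$. The base case $n=0$ is the displayed inequality above. For the inductive step, inserting $A_j\le B_j\prod_{i=0}^j\sigma_i\le B_n\prod_{i=0}^j\sigma_i$ (using monotonicity of $B$) gives $A_n\le\sigma_n B_n\big(1+\Delta t\sum_{j=0}^{n-1}\lambda_j\prod_{i=0}^j\sigma_i\big)$. The key elementary fact is the identity $1+\Delta t\sum_{j=0}^m\lambda_j\prod_{i=0}^j\sigma_i=\prod_{i=0}^m\sigma_i$ for every $m\ge0$, which follows by a one-line induction from $\sigma_j-\Delta t\lambda_j\sigma_j=1$; applying it with $m=n-1$ gives $A_n\le\sigma_n B_n\prod_{i=0}^{n-1}\sigma_i=B_n\prod_{i=0}^n\sigma_i$, closing the induction.

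Finally I would pass from the product to the exponential: taking $u=\sigma_j\ge1$ in $\ln u\le u-1$ yields $\ln\sigma_j\le\sigma_j-1=\Delta t\lambda_j\sigma_j$, hence $\prod_{j=0}^n\sigma_j\le\exp\big(\Delta t\sum_{j=0}^n\sigma_j\lambda_j\big)$. Combining this with $A_n\le B_n\prod_{j=0}^n\sigma_j$ and recalling the definitions of $A_n$ and $B_n$ gives precisely the claimed bound. I do not expect a genuine obstacle here, since the argument is entirely elementary; the only points demanding a little care are the factorisation of $(1-\Delta t\lambda_n)$ in the first step (legitimate because $0<1-\Delta t\lambda_n\le1$) and the bookkeeping in the telescoping identity for $\prod_{i\le m}\sigma_i$.
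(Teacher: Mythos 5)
Your proof is correct: the factorisation $(1-\Delta t\lambda_n)A_n\le A_n-\Delta t\lambda_n a_n$ is legitimate, the telescoping identity $1+\Delta t\sum_{j=0}^m\lambda_j\prod_{i\le j}\sigma_i=\prod_{i\le m}\sigma_i$ holds, and the passage to the exponential via $\ln\sigma_j\le\sigma_j-1=\Delta t\lambda_j\sigma_j$ is exactly right. The paper does not prove this lemma at all but merely cites Lemma 5.1 of Heywood and Rannacher, and your argument is essentially the standard proof of that cited result (isolate the implicit $j=n$ term, multiply by $\sigma_n$, induct, then bound the product by the exponential), so there is nothing to object to.
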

\begin{proof}
See \cite[Lemma 5.1]{HR1990}.
\end{proof}
\subsection{Error estimates for the fully-discrete scheme}\label{error:fully:scheme}
In this subsection we will provide a convergence result for the fully-discrete 
problem~\eqref{eq:VEM2} under suitable regularity conditions for the exact solution. 

We start denoting  $(\psi(t_n), \theta(t_n))$ as $(\psi^n, \theta^n)$ 
at each time level $t_n$, and splitting the stream-function error as follows:
\begin{equation*}
\begin{split}
\psi^n-\psi^n_h &= (\psi^n-\mS_h \psi^n) - (\psi^n_h- \mS_h\psi^n)=: \etanpsi - \varnpsi.
\end{split}	
\end{equation*}  
For the temperature variable we will exploit the virtual interpolant presented in  
Proposition~\ref{approx-virtual-temp}, to split the error as:
\begin{equation*}
\theta^n-\theta^n_h = (\theta^n- \theta_I^n) - (\theta^n_h- \theta_I^n)=: \etantheta - \varntheta,
\end{equation*}  
where $\theta^n_I$ is the interpolant of $\theta^n$  in the virtual space $\Hh$.

Error estimates for the terms $\etantheta$ and $\etanpsi$ are given by 
Propositions~\ref{approx-virtual-temp} and \ref{prop:engy:bihar}, respectively. 
Therefore, we will focus on the terms $\varnpsi$ and $\varntheta$.

The following result establishes an error estimate for the fully-discrete virtual scheme~\eqref{eq:VEM2}.

\begin{theorem}\label{converg:fullydisc}
Suppose that the external forces satisfy 
$\fb_{\psi} \in \L^{\infty}(0,T;\HH^{s}(\O))$, $f_{\theta} \in \L^{\infty}(0,T;\H^{r}(\O))$ 
and $\gb \in \L^{\infty}(0,T;\WW^{\min\{s,r\}}_{\infty}(\O))$, with $\frac{1}{2}<s \leq k-1$ and $1\leq  r \leq \ell$. 
Let $(\psi^n,\theta^n)\in \H_0^2(\Omega)\times \H_0^1(\Omega)$ be the solution
of problem~\eqref{BSE-stream2} at time $t=t_n$. Moreover, assume that
\begin{equation*}
\begin{split}	
\psi \in \L^{\infty}(0,T;\H^{2+s}(\O)),& \qquad \partial_t  \psi\in \L^{1}(0,T;\H^{1+s}(\O)), \qquad \partial_{tt}\psi \in \L^{1}(0,T;\H^{1}(\O)),\\	
\theta \in \L^{\infty}(0,T;\H^{1+r}(\O)\cap\W^1_{\infty}(\O)),& \qquad   \partial_{t}\theta \in \L^{1}(0,T;\H^{r}(\O)),
\quad\qquad \partial_{tt}\theta \in \L^{1}(0,T;\L^2(\O)).
\end{split}
\end{equation*} 
 Let $(\psi^n_h,\theta^n_h) \in \Wh\times \Hh$ be the virtual element solution generated by scheme~\eqref{eq:VEM2}. 
Then, the following estimate holds
\begin{equation*}
\|(\psi^n-\psi^n_h,\theta^n-\theta^n_h)\|^2_{\H^1(\O) \times \L^2(\O)}  + \Delta t \sum_{j=1}^{n} \|(\psi^j-\psi^j_h,\theta^j-\theta^j_h)\|^2_{\H^2(\O) \times \H^1(\O)} \leq C(h^{2\min\{s,r\}} + \Delta t^2),
\end{equation*} 
where the constant $C$ is positive and depends on the physical parameters $\nu, \kappa$, final time $T$,  mesh regularity parameter, the regularity of the Boussinesq solution fields $(\psi,\theta)$ and the external forces $\fb_{\psi},f_{\theta},\gb$, but is independent of mesh	size $h$ and time steps $\Delta t$.
\end{theorem}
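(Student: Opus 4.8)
The plan is to derive error equations for $\varnpsi$ and $\varntheta$ by subtracting the fully-discrete scheme~\eqref{eq:VEM2} from the weak formulation~\eqref{BSE-stream2} evaluated at $t=t_n$, then test with the discrete errors and combine via the discrete Gronwall inequality (Lemma~\ref{discrete:gronwall}). First I would write, for the temperature equation,
\begin{equation*}
M_T^h\Big(\frac{\varntheta-\varphi^{n-1}_\theta}{\Delta t},v_h\Big)+\kappa A_T^h(\varntheta,v_h)+B_{\skew}^h(\psi^n_h;\varntheta,v_h)
\end{equation*}
equals a sum of consistency and interpolation terms: the time-discretization remainder $(\partial_t\theta(t_n)-(\theta^n-\theta^{n-1})/\Delta t)$ controlled by $\partial_{tt}\theta\in\L^1(0,T;\L^2(\O))$; the terms involving $\etantheta$ (interpolation error of $\theta^n$, bounded by Proposition~\ref{approx-virtual-temp}); the variational crimes $M_T-M_T^h$, $A_T-A_T^h$ (standard $\ell$-consistency estimates); and the convection crime $B_{\skew}(\psi^n;\theta^n,v_h)-B_{\skew}^h(\psi^n_h;\theta^n,v_h)$, which I split into $B_{\skew}-B_{\skew}^h$ evaluated at the exact solution (Lemma~\ref{lemma:crimen}, giving $h^{\min\{s,r\}}$) plus $B_{\skew}^h(\psi^n-\psi^n_h;\theta^n,\cdot)$, handled by the boundedness in Lemma~\ref{lemma:disc:form} together with the regularity $\theta\in\L^\infty(0,T;\W^1_\infty)$ so that the $\psi$-error enters only in $\H^1$-norm — this is where the two-dimensional Sobolev structure and the skew-symmetry $B_{\skew}^h(\psi^n_h;\varntheta,\varntheta)=0$ are essential. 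Testing with $v_h=\varntheta$ and using $M_T^h$-coercivity and $A_T^h$-coercivity gives, after the algebraic identity $2(a-b)a=a^2-b^2+(a-b)^2$, a bound of the form
\begin{equation*}
\|\varntheta\|_{0,\O}^2-\|\varphi^{n-1}_\theta\|_{0,\O}^2+\kappa\Delta t\,\widehat\alpha_{A_T}\|\varntheta\|_{1,\O}^2\le C\Delta t\big(\|\varntheta\|_{0,\O}^2+\|\varnpsi\|_{1,\O}^2\big)+C\Delta t(h^{2\min\{s,r\}}+\Delta t^2).
\end{equation*}

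Next I would repeat the argument for the stream-function equation: test the $\varnpsi$-error equation with $\phi_h=\varnpsi$. Here the key simplification is that $\mS_h$ is the energy projection for $A_F^h$, so $A_F^h(\varnpsi,\phi_h)=A_F^h(\psi^n_h,\phi_h)-A_F(\psi^n,\phi_h)$ and the leading elliptic term is handled cleanly; the terms in $\etanpsi$ are controlled by Proposition~\ref{prop:engy:bihar}. The convective term is the delicate one: I would write $B_F^h(\psi^n_h;\psi^n_h,\phi_h)-B_F(\psi^n;\psi^n,\phi_h)$ as (crime at exact solution, Lemma~\ref{lemma:crimenBF}) plus $B_F^h(\psi^n_h;\psi^n_h,\phi_h)-B_F^h(\psi^n;\psi^n,\phi_h)$, and for the latter invoke the algebraic decomposition of Lemma~\ref{lemma-tec2} with $\varphi=\psi^n$, $\zeta=\psi^n_h$, so that after testing with $\phi_h=\varnpsi$ the genuinely problematic term $B_F^h(\varnpsi;\psi^n,\varnpsi)$ appears, which I bound using Lemma~\ref{boundBFh:two} as $\CBFh\|\varnpsi\|_{2,\O}^{3/2}\|\varnpsi\|_{1,\O}^{1/2}\|\psi^n\|_{2,\O}$ and then absorb the $\|\varnpsi\|_{2,\O}^2$ part into the coercive term $\nu\widehat\alpha_{A_F}\Delta t\|\varnpsi\|_{2,\O}^2$ via Young's inequality with exponents $4/3$ and $4$, leaving a harmless $\|\varnpsi\|_{1,\O}^2$ contribution to the Gronwall right-hand side (this uses the $\L^\infty(0,T;\H^{2+s})$ bound on $\psi$, hence on $\psi^n$ in $\H^2$). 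The buoyancy coupling $C^h(\theta^n_h,\varnpsi)-C(\theta^n,\varnpsi)$ is split into the crime~\eqref{prop:C-Ch} plus $C^h(\theta^n-\theta^n_h,\varnpsi)$, the latter bounded by $\|\gb\|_{\infty}\|\varntheta\|_{0,\O}\|\varnpsi\|_{1,\O}+(\text{interpolation})$ and again absorbed by Young.

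Then I would add the two inequalities, choose the Young constants so that all $\|\varnpsi\|_{2,\O}^2$ and $\|\varntheta\|_{1,\O}^2$ terms on the right are absorbed by the left, and sum over $n=1,\dots,m$; the $M_F^h$- and $M_T^h$-norms telescope, $\varphi^0_\psi=\psi^0_h-\mS_h\psi^0=0$ and $\varphi^0_\theta=\theta^0_h-\theta^0_I$ is $O(h^{\min\{s,r\}})$, and the time-remainder and crime terms accumulate to $C(h^{2\min\{s,r\}}+\Delta t^2)$ using $\big(\Delta t\sum(\int_{t_{n-1}}^{t_n}\|\partial_{tt}(\cdot)\|)^2\big)^{1/2}\le\Delta t\,\|\partial_{tt}(\cdot)\|_{\L^1(0,T;\cdot)}$ and the assumed regularity. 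Applying Lemma~\ref{discrete:gronwall} (valid once $\Delta t$ is small enough that $\Delta t\lambda_j<1$, consistent with Theorem~\ref{uniqueness}) gives the stated bound on $\|\varnpsi\|_{1,\O}^2+\|\varntheta\|_{0,\O}^2+\Delta t\sum\|\varnpsi\|_{2,\O}^2+\Delta t\sum\|\varntheta\|_{1,\O}^2$, and the triangle inequality with the $\eta$-estimates concludes. The main obstacle I anticipate is controlling the nonlinear stream-function convective term $B_F^h(\varnpsi;\psi^n,\varnpsi)$: it involves $\|\varnpsi\|_{2,\O}^{3/2}$, so the absorption into the $\Delta t$-weighted $\nu\widehat\alpha_{A_F}\|\varnpsi\|_{2,\O}^2$ term is tight and, after Young, produces a constant blowing up like $\nu^{-3}$ — harmless for the asymptotic rate but it is the step requiring the most care, and it is exactly why the sharp boundedness of Lemma~\ref{boundBFh:two} (rather than the cruder bound in Lemma~\ref{lemma:disc:form}) is needed.
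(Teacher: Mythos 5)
Your proposal follows essentially the same route as the paper's proof: the same error splitting (energy projection $\mS_h$ for the stream-function, interpolant $\theta_I^n$ for the temperature), the same key ingredients (Lemma~\ref{lemma-tec2} plus Lemma~\ref{boundBFh:two} with Young's inequality producing the $\nu^{-3}$ constant for the term $B_F^h(\varnpsi;\cdot,\varnpsi)$, the consistency Lemmas~\ref{lemma:crimenBF} and~\ref{lemma:crimen}, skew-symmetry and $\theta\in\W^1_\infty$ so that the stream-function error enters the energy equation only through $\|\varnpsi\|_{1,\O}$), and the same absorption-plus-discrete-Gronwall conclusion. The only differences are minor organizational ones in how the temperature convective term is split, and your remark that Gronwall needs $\Delta t\lambda_j<1$ is a correct (and in the paper implicit) observation.
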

\begin{proof}
We will establish the proof in four steps. In Step $1$, by using the energy 
operator~\eqref{dis:engy:bilap} and the interpolant presented in Proposition~\ref{approx-virtual-temp},
we establish error equations for the momentum and energy identities in~\eqref{eq:VEM2}. 
In Steps $2$ and $3$, we derive error estimates for the error equations of Step $1$. 
Finally, in Step $4$, we combine the results obtained in Steps $2$ and $3$, 
then by employing the discrete Gronwall inequality we derive the desired result. 
\paragraph{Step 1: Establishing error equations of the momentum and energy identities.}
By using the fully-discrete scheme \eqref{eq:VEM2},
the continuous weak formulation \eqref{BSE-stream2} and the biharmonic energy
projection $\mS_h$ defined in \eqref{dis:engy:bilap}, 
we have the following error equation for the momentum identity
(where we have taken $\phi_h=\varnpsi \in \Wh$)
\begin{equation}\label{error:eq:NS}
\begin{split}
M_F^h\left( \frac{\varphi_{\psi}^n-\varphi_{\psi}^{n-1}}{\Delta t}, \varnpsi\right)
&+\nu A_F^h(\varnpsi, \varnpsi) 
 = \Big(F_{\psi}^h(\varnpsi)-F_{\psi}(\varnpsi)\Big)
+\Big(B_F(\psi^n;\psi^n,\varnpsi)-B_F^h(\psi_h^n;\psi_h^n,\varnpsi)\Big)\\
& +\left( M_F(\partial_t\psi^n, \varnpsi)-M_F^h\Big(\frac{\mS_h \psi^n
-\mS_h \psi^{n-1}}{\Delta t},\varnpsi\Big) \right) 
+\Big(C^h(\theta^n_h,\varnpsi)-C(\theta^n,\varnpsi)\Big)\\
&=: T_{F} + T_B + T_M + T_C.
\end{split}
\end{equation} 

Analogously, recalling that $\varntheta=\theta^n_h- \theta_I^n$, and using the definition of 
the continuous and discrete problems (cf. \eqref{BSE-stream2} and \eqref{eq:VEM2}, respectively)
for the energy equation, we have that 
\begin{equation}\label{error:eq:heat}
\begin{split}
M_T^h\left( \frac{\varphi_{\theta}^n-\varphi_{\theta}^{n-1}}{\Delta t}, \varntheta\right)
&+\kappa A_T^h(\varntheta, \varntheta) 
 = \Big(F_{\theta}^h(\varntheta)-F_{\theta}(\varntheta)\Big)
+\Big(B_{\skew}(\psi^n;\theta^n,\varntheta)-B_{\skew}^h(\psi_h^n;\theta_h^n,\varntheta)\Big)\\
& +\left( M_T(\partial_t\theta^n, \varntheta)-M_T^h\Big(\frac{\theta_I^n
-\theta_I^{n-1}}{\Delta t},\varntheta\Big) \right) 
+ \kappa \left(A_{T}(\theta^n,\varntheta)- A^h_{T}(\theta_I^n,\varntheta)\right) \\
&=: I_{F} + I_B + I_M +I_A.
\end{split}
\end{equation} 

\paragraph{Step 2: Deriving  error estimate for the momentum equation~\eqref{error:eq:NS}.}
In this step we will establish bounds for each term in \eqref{error:eq:NS}. Indeed,
by using the definition of the functionals $F_{\psi}(\cdot)$ and $F_{\psi}^h(\cdot)$, the Cauchy-Schwarz and Young  inequalities for the term $T_F$ holds
\begin{equation}\label{term:TF}
\begin{split}
T_F &\leq  \frac{C}{2\epsilon} h^{2s}\|\fb_{\psi}\|^2_{\L^{\infty}(t_{n-1},t_{n}; \H^s(\O_h))} +  \frac{\epsilon}{2} \|\varnpsi\|^2_{1,\O}.
	\end{split}
\end{equation} 
For the term $T_M$, we proceed similarly as in \cite[Theorem 5.6]{AMNS2021-M2AN} to obtain
\begin{equation}\label{term:TM}
\begin{split}
T_M&:=M_F(\partial_t\psi^n,\varnpsi)-M_F^h \left( \frac{\mS_h \psi^n-\mS_h \psi^{n-1}}{\Delta t}, \varnpsi \right)=M_F\left(\partial_t\psi^n-\frac{\psi^n-\psi^{n-1}}{\Delta t}, \varnpsi \right) \\
& \quad + \sum_{\E\in\CT_h}M^{\E}_F\left(\frac{\psi^n-\psi^{n-1} }{\Delta t}
-\left( \frac{\PiK(\psi^n-\psi^{n-1} )}{\Delta t}\right), \varnpsi \right) \\
& \quad +\sum_{\E\in\CT_h}M_F^{\E,h} \left( \left( \frac{\PiK ( \psi^n-\psi^{n-1})}{\Delta t}\right)
-\frac{\mS_h \psi^n-\mS_h \psi^{n-1}}{\Delta t}, \varnpsi\right)\\
&\leq C \|\partial_{tt} \psi \|_{\L^{1}(t_{n-1},t_n; \H^{1}(\O))}  \|\varnpsi\|_{1,\O} +
\frac{C}{\Delta t } h^{s} \|\partial_t \psi \|_{\L^1(t_{n-1},t_n; \H^{1+s}(\O))} \|\varnpsi\|_{1,\O}. 
	\end{split}
\end{equation}

Next, to estimate $T_C$, we add and subtract the term $C^h(\theta^n, \varnpsi)$ to get
\begin{equation}\label{term:TC}
\begin{split}
T_C&:=C^h(\theta^n_h,\varnpsi)-C(\theta^n,\varnpsi) 
= C^h(\theta^n_h-\theta^n,\varnpsi)+ (C^h(\theta^n,\varnpsi)-C(\theta^n,\varnpsi))\\
&= (C^h(\varntheta,\varnpsi)- C^h(\etantheta,\varnpsi)) + 
(C^h(\theta^n,\varnpsi)-C(\theta^n,\varnpsi))\\
& \leq \|\gb^n\|_{\infty,\O} \left(\|\varntheta\|_{0,\O} + \|\etantheta\|_{0,\O} \right)\|\varnpsi\|_{1,\O} 
+Ch^{\min\{s,r\}}\|\gb^n\|_{\W^{\min\{s,r\}}_{\infty}(\O)}\|\theta\|_{r,\O}
\|\varnpsi\|_{1,\O}\\
& \leq C\|\gb^n\|^2_{\infty,\O}(\|\varntheta\|^2_{0,\O} + \|\varnpsi\|^2_{1,\O})+ 
Ch^{2\min\{s,r\}}\|\gb^n\|^2_{\W^{\min\{s,r\}}_{\infty}(\O)}\|\theta\|^2_{r,\O} + 
\epsilon\|\varnpsi\|^2_{1,\O},
\end{split}
\end{equation}
where we have used the H\"{o}lder inequality, bound~\eqref{prop:C-Ch} (with $\gamma=\min\{s,r\}$) and the Young inequality. 

For the term $T_B$, we have
\begin{equation}\label{preterm:TB}
\begin{split}
T_B&:=B_F(\psi^n;\psi^n,\varnpsi)-B_F^h(\psi_h^n;\psi_h^n,\varnpsi) = 
\Big( B_F(\psi^n;\psi^n,\varnpsi)-B_F^h(\psi^n;\psi^n,\varnpsi)\Big)\\
& \quad+ \Big(  B^h_F(\psi^n;\psi^n,\varnpsi)-B_F^h(\psi_h^n;\psi_h^n,\varnpsi) \Big)
=: T_{B1} + T_{B2}.
\end{split}
\end{equation}  
Now, we will bound the terms $T_{B1}$ and $T_{B2}$. Indeed, from Lemma~\ref{lemma:crimenBF} and the 
Young inequality we have that 
\begin{equation}\label{TB1}
\begin{split}
T_{B1} &:= B_F(\psi^n;\psi^n,\varnpsi)-B_F^h(\psi^n;\psi^n,\varnpsi) \leq C \: h^s (\|\psi^n\|_{2+s,\O}+\|\psi^n\|_{2,\O})\|\psi^n\|_{2+s,\O}\|\varnpsi\|_{2,\O}\\
& \leq \frac{4C}{\aAFh\nu } h^{2s} \|\psi^n\|^2_{2+s,\O}+ \frac{\aAFh\nu}{8} \|\varnpsi\|^2_{2,\O}\\
& \leq C \nu^{-1}h^{2s} \|\psi^n\|^2_{2+s,\O}+ \frac{\aAFh\nu}{8} \|\varnpsi\|^2_{2,\O},
	\end{split}
\end{equation}
where we have included the term $(\|\psi^n\|_{2+s,\O}+\|\psi^n\|_{2,\O})$ 
in the constant $C$ in order to shorten the inequality.

On the other hand, to bound the term $T_{B2}$, we apply Lemma~\ref{lemma-tec2}, 
recall that $\varnpsi= \psi_h^n-\mS_h\psi^n$  and $\etanpsi= \psi^n-\mS_h\psi^n$,
 to arrive
\begin{equation}\label{TB2}
\begin{split}
T_{B2} &:= B_F^h(\psi^n; \psi^n,\varnpsi) -B_F^h(\psi_h^n;\psi_h^n,\varnpsi) \\	
&= B_F^h(\psi^n; \psi^n-\psi_h^n+\varnpsi,\varnpsi)+
B_F^h( \psi^n-\psi_h^n+\varnpsi;\psi_h^n,\varnpsi)
-B_F^h(\varnpsi;\psi_h^n,\varnpsi)\\
& =  B_F^h(\psi^n;\etanpsi ,\varnpsi)+ B_F^h(\etanpsi;\psi^n_h,\varnpsi)- B_F^h(\varnpsi;\psi_h^n,\varnpsi).
	\end{split}
\end{equation}

By using Lemma~\ref{lemma:disc:form}, together with the Young inequality, we have 
\begin{equation*}
\begin{split}
B_F^h(\psi^n;\etanpsi,\varnpsi) 	
\leq \frac{\aAFh\nu}{8}  \|\varnpsi\|^2_{2,\O}
+ C \nu^{-1} \|\psi^n\|^2_{2,\O}\|\etanpsi\|^2_{2,\O}.
\end{split}
\end{equation*}
Now, adding and subtracting suitable terms, employing Lemma~\ref{lemma:disc:form} along with 
the Young inequality, we  obtain
\begin{equation*}
\begin{split}
B_F^h(\etanpsi;\psi^n_h,\varnpsi) &= B_F^h(\etanpsi;\psi^n+(\psi^n_h-\psi^n),\varnpsi)\\
&= B_F^h(\etanpsi;\psi^n,\varnpsi) +B_F^h(\etanpsi; \varnpsi- \etanpsi,\varnpsi)\\
&=B_F^h(\etanpsi;\psi^n,\varnpsi) -B_F^h(\etanpsi;\etanpsi,\varnpsi)\\
& \leq  \CBFh \left(\|\psi^n\|_{2,\O}
+\|\etanpsi\|_{2,\O} \right) \|\etanpsi\|_{2,\O} \|\varnpsi\|_{2,\O} \\
& \leq \frac{\aAFh\nu}{8}   \|\varnpsi\|^2_{2,\O} 
+ C \nu^{-1}(\|\psi^n\|^2_{2,\O}+\|\etanpsi\|^2_{2,\O}) \|\etanpsi\|^2_{2,\O}.
\end{split}
\end{equation*}

Once again adding and subtracting suitable terms, using Lemma~\ref{boundBFh:two} 
and the Young inequality, we get
\begin{equation*}
\begin{split}
-B_F^h(\varnpsi;\psi^n_h,\varnpsi) &= B_F^h(\varnpsi;(\psi^n-\psi^n_h)-\psi^n,\varnpsi)
= B_F^h(\varnpsi;\etanpsi,\varnpsi)-B_F^h(\varnpsi; \psi^n,\varnpsi)\\
& \leq \CBFh \|\varnpsi\|_{2,\O} \left(\|\etanpsi\|_{2,\O} +\|\psi^n\|_{2,\O} \right) 
\|\varnpsi\|^{\frac{1}{2}}_{2,\O} \|\varnpsi\|^{\frac{1}{2}}_{1,\O}\\
& \leq \frac{\aAFh\nu}{16} \|\varnpsi\|^2_{2,\O} + 2C\nu^{-1} \left(\|\etanpsi\|^2_{2,\O} +\|\psi^n\|^2_{2,\O} \right) \|\varnpsi\|_{2,\O} \|\varnpsi\|_{1,\O}\\
& \leq \frac{\aAFh\nu}{16}\|\varnpsi\|^{2}_{2,\O}+ 2\nu^{-2}C_4\nu^{-1} \left(\|\etanpsi\|^2_{2,\O} +\|\psi^n\|^2_{2,\O} \right)^2 \|\varnpsi\|^{2}_{1,\O} 
+\frac{\aAFh\nu}{16}\|\varnpsi\|^{2}_{2,\O} \\
& \leq \frac{\aAFh\nu}{8}\|\varnpsi\|^{2}_{2,\O} + 4C_4\nu^{-3} \left(\|\etanpsi\|^4_{2,\O} +\|\psi^n\|^4_{2,\O} \right) 
\|\varnpsi\|^{2}_{1,\O}.
	\end{split}
\end{equation*}

Combining the estimates \eqref{preterm:TB}-\eqref{TB2} and the three previous inequalities, we have
\begin{equation}\label{term:TB}
	\begin{split}
T_B  &\leq C_1\nu^{-1} h^{2s} \|\psi^n\|^2_{2+s,\O}\|\psi^n\|^2_{2,\O}
+ \frac{\aAFh\nu}{2} \|\varnpsi\|^2_{2,\O} + C \nu^{-1} \|\psi^n\|^2_{2,\O}\|\etanpsi\|^2_{2,\O}\\
& \qquad+ C \nu^{-1} (\|\psi^n\|^2_{2,\O}+\|\etanpsi\|^2_{2,\O}) \|\etanpsi\|^2_{2,\O}+ C_4 \nu^{-3} \left(\|\etanpsi\|^4_{2,\O} +\|\psi^n\|^4_{2,\O} \right) \|\varnpsi\|^{2}_{1,\O}.
	\end{split}
\end{equation} 

Now, from estimates \eqref{error:eq:NS}, \eqref{term:TF}-\eqref{term:TC} and \eqref{term:TB}, the definition and equivalence of the norm 
$||| \cdot |||_{F,h}$ (cf. \eqref{norm:heatfluid}), together  with  the coercivity of bilinear form  $A^h_F(\cdot,\cdot)$ we have 
\begin{equation}\label{error:estima:NS}
\begin{split}
\frac{1}{2 \Delta t}& \left(|||\varnpsi|||^2_{F,h}- |||\varphi^{n-1}_{\psi}|||^2_{F,h} \right) 
 + \frac{\aAFh\nu}{2}\|\varnpsi\|^{2}_{2,\O}
  \leq  C \left[ 1 +  \nu^{-3}\left(\|\etanpsi\|^4_{2,\O} +\|\psi^n\|^4_{2,\O} \right)  \right] |||\varnpsi|||^2_{F,h} \\
& \quad + C \left[ \nu^{-1} (\|\psi^n\|^2_{2,\O}+\|\etanpsi\|^2_{2,\O})  \right] \|\etanpsi\|^{2}_{2,\O}  
+ \|\gb^n\|^2_{\infty,\O} (\|\etantheta\|^{2}_{0,\O}+ \|\varntheta\|^{2}_{0,\O})\\
&\quad +	Ch^{2s}\left( \|\fb_{\psi}\|^2_{\L^{\infty}(t_{n-1},t_{n}; \H^s(\O_h))}+ \nu^{-1} \|\psi^n\|^2_{2+s,\O} \right)  +h^{2\min\{s,r\}}\|\gb\|^2_{\W^{\min\{s,r\}}_{\infty}(\O)}\|\theta^n\|^2_{r,\O} \\
& \quad +C \|\partial_{tt} \psi \|_{\L^{1}(t_{n-1},t_n; \H^{1}(\O))} |||\varnpsi|||_{F,h} +\frac{C}{\Delta t } h^{s} \|\partial_t \psi \|_{\L^1(t_{n-1},t_n; \H^{1+s}(\O))}|||\varnpsi|||_{F,h}.
\end{split}
\end{equation}

\paragraph{Step 3: Deriving  error estimate for the energy equation~\eqref{error:eq:heat}.}
In this step we will establish estimates for each terms in the error equation~\eqref{error:eq:heat}. 
We start with the term $I_F$, which is bounded by using the Cauchy-Schwarz inequality 
and approximation properties of projection $\Pi^{\ell}_{\E}$, as follows:
\begin{equation}\label{term:IF}
\begin{split}
I_F &:=F_{\theta}^h(\varntheta)-F_{\theta}(\varntheta) 
\leq \frac{C}{2\epsilon} h^{2r}\|f_{\theta}\|^2_{\L^{\infty}(t_{n-1},t_{n}; \H^r(\O_h))} + \frac{\epsilon}{2} \|\varntheta\|^2_{0,\O}.
	\end{split}
\end{equation} 
For the  term $I_M$, we proceed similarly as in \cite[Theorem 3.3]{VB15} to obtain
\begin{equation}\label{term:IM}
	\begin{split}
I_M&:=M_T(\partial_t\theta^n, \varntheta)-M_F^h\Big(\frac{\theta_I^n
-\theta_I^{n-1}}{\Delta t},\varntheta\Big)	\\
&\leq C\|\partial_{tt} \theta\|_{\L^{1}(t_{n-1},t_n; \L^{2}(\O))}\|\varntheta\|_{0,\O} 
+\frac{C}{\Delta t} h^{r} \|\partial_t \theta \|_{\L^1(t_{n-1},t_n; \H^{r}(\O))}\|\varntheta\|_{0,\O}.
\end{split}
\end{equation}
Analogously, as in \eqref{preterm:TB} we split the term $I_B$ as follows:
\begin{equation}\label{preterm:IB}
\begin{split}
I_B&:=B_{\skew}(\psi^n;\theta^n,\varntheta)-B_{\skew}^h(\psi_h^n;\theta_h^n,\varnpsi) 
= \Big( B_{\skew}(\psi^n;\theta^n,\varntheta)-B_{\skew}^h(\psi^n;\theta^n,\varntheta)\Big)\\
& \qquad+ \Big(  B^h_{\skew}(\psi^n;\theta^n,\varntheta)-B_{\skew}^h(\psi_h^n;\theta_h^n,\varntheta) \Big)
=: I_{B1} + I_{B2}.
\end{split}
\end{equation} 
Now, applying the bound~\eqref{prop:Bskew-Bskewh}, with $\gamma=\min\{s,r\}$
and using the Young inequality, we obtain
\begin{equation}\label{IB1}
\begin{split}
I_{B1} &:=B_{\skew}(\psi^n;\theta^n,\varntheta)-B_{\skew}^h(\psi^n;\theta^n,\varntheta) \leq Ch^{\min\{s,r\}}\|\psi^n\|_{2+s,\O}\|\theta^n\|_{1+r,\O}\|\varntheta\|_{1,\O}\\
&\leq C\kappa^{-1}h^{2\min\{s,r\}}\|\psi^n\|^2_{2+s,\O} \|\theta^n\|^2_{1+r,\O} + \frac{\aATh \kappa}{10} \|\varntheta\|^2_{1,\O}. 
\end{split}	
\end{equation} 
On the other hand, similarly as in \eqref{TB2} and \eqref{term:TB}, we can derive
\begin{equation}\label{IB2}
\begin{split}
I_{B2}
& =  B_{\skew}^h(\psi^n;\etantheta ,\varntheta)+ B_{\skew}^h(\etanpsi;\theta^n_h,\varntheta)- B_{\skew}^h(\varnpsi;\theta_h^n,\varntheta)\\
& \leq \frac{\aATh \kappa}{10} \|\varntheta\|^2_{1,\O} + C \kappa^{-1} \|\psi^n\|^2_{2,\O}\|\etantheta\|^2_{1,\O} 
+  \frac{\aATh \kappa}{10} \|\varntheta\|^2_{1,\O} 
+ C \kappa^{-1}(\|\theta^n\|^2_{1,\O}+\|\etantheta\|^2_{1,\O}) \|\etanpsi\|^2_{2,\O}\\
& \qquad - B_{\skew}^h(\varnpsi;\theta_h^n,\varntheta).
	\end{split}
\end{equation}
However, since the discrete trilinear form $B^h_{\skew}(\cdot;\cdot,\cdot)$ does not satisfies an analogous property to Lemma~\ref{boundBFh:two}, we will bound the last term in \eqref{IB2} by a different way. 
Indeed, adding and subtracting adequate terms, using the definition of 
trilinear form, the H\"{o}lder inequality and employing the continuity
of the $\L^2$-projections involved,  we obtain  
\begin{equation}\label{III}
\begin{split}
-B_{\skew}^h(\varnpsi;\theta_h^n,\varntheta) 
&= B_{\skew}^h(\varnpsi;\etantheta,\varntheta)+ B_{\skew}^h(\varnpsi;-\theta^n,\varntheta)\\ 
&= \frac{1}{2} \left( B_{T}^h(\varnpsi;\etantheta,\varntheta) -B_{T}^h(\varnpsi;\varntheta,\etantheta) \right) + B_{\skew}^h(\varnpsi;-\theta^n,\varntheta)\\
& \leq C\sum_{\E \in \CT_h} \|\PiellK \nabla\etantheta\|_{\L^{\infty}(\E)} \| \curl\varnpsi\|_{0,\E}\|\varntheta\|_{0,\E} \\ 
& \quad + C\sum_{\E \in \CT_h}\|\Pi^{\ell-1}_{\E} \etantheta\|_{\L^{\infty}(\E)} \| \curl\varnpsi\|_{0,\E}\|\nabla \varntheta\|_{0,\E}
 + B_{\skew}^h(\varnpsi;-\theta^n,\varntheta).
	\end{split}
\end{equation}

Now, applying an inverse inequality for polynomials, the continuity
of $\PiellK$, and Proposition~\ref{approx-virtual-temp}, 
for $r\geq 1$ we get
\begin{equation*}
\begin{split}
\|\PiellK \nabla\etantheta\|_{\L^{\infty}(\E)} 
&\leq C h^{-1}_{\E}\|\PiellK \nabla\etantheta\|_{0,\E} \leq C h^{-1}_{\E}\|\etantheta\|_{1,\E} \leq C \|\theta^n\|_{1+r,\E} \leq \CR.   
\end{split}
\end{equation*}
Analogously, we have that 
\begin{equation*}
\|\Pi^{\ell-1}_{\E}\etantheta\|_{\L^{\infty}(\E)}  \leq C \|\theta^n\|_{1+r,\E} \leq \CR. 
\end{equation*}
Next, under assumption $\theta^n \in \W^1_{\infty}(\O)$, the definition of the form 
$B_{\skew}^h(\cdot;\cdot,\cdot)$ and the Cauchy-Schwarz inequality, we get 
\begin{equation*}
\begin{split}
 B_{\skew}^h(\varnpsi;-\theta^n,\varntheta) \leq C_1\|\theta^n\|_{\W^1_{\infty}(\O)} \|\varnpsi\|_{1,\O}\|\varntheta\|_{0,\O} \leq \CR \|\varnpsi\|_{1,\O}\|\varntheta\|_{0,\O}.
\end{split}
\end{equation*}
Inserting the above estimates in \eqref{III}, and applying the Cauchy-Schwarz and Young inequalities,
it follows 
\begin{equation}\label{III-2}
\begin{split}
-B_{\skew}^h(\varnpsi;\theta_h^n,\varntheta) &\leq 3\CR\|\varnpsi\|_{1,\O}\|\varntheta\|_{1,\O} 
\leq C\kappa^{-1}\|\varnpsi\|^2_{1,\O}+ \frac{\aATh\kappa}{10}\|\varntheta\|^2_{1,\O}.
\end{split}
\end{equation}
Then, combining the estimates \eqref{preterm:IB}, \eqref{IB1}, \eqref{IB2} and  \eqref{III-2}, we obtain
\begin{equation}\label{term:IB}
\begin{split}
I_B  &\leq C\kappa^{-1}h^{2\min\{s,r\}}\|\psi^n\|^2_{2+s,\O} \|\theta^n\|^2_{1+r,\O}  +C \kappa^{-1} \|\psi^n\|^2_{2,\O}\|\etantheta\|^2_{1,\O} \\
 & \quad  + C \kappa^{-1}(\|\theta^n\|^2_{1,\O}+\|\etantheta\|^2_{1,\O}) \|\etanpsi\|^2_{2,\O}
  + \frac{4\aATh\kappa}{10}\|\varntheta\|^2_{1,\O}+C(\kappa^{-1}+1) \|\varnpsi\|^2_{1,\O}.\\
\end{split}
\end{equation}

Now, for the term $I_A$, we add and subtract $\theta^n_{\pi} \in \P_{\ell}(\E)$ such that Proposition~\ref{app-pol} 
holds true, then applying the consistency property of $A^{h,\E}_T(\cdot,\cdot)$, the triangle inequality and  Proposition~\ref{approx-virtual-temp}, we have that 
\begin{equation}\label{term:IA}
\begin{split}
I_A & = \kappa \sum_{\E\in\CT_h}\left(A^{\E}_{T}(\theta^n,\varntheta)- A^{h,\E}_{T}(\theta_I^n,\varntheta)\right)
= \kappa  \sum_{\E\in\CT_h}\left(A^{\E}_{T}(\theta^n-\theta^n_{\pi},\varntheta)+ A^{h,\E}_{T}(\theta^n_{\pi}-\theta_I^n,\varntheta)\right)\\
& \leq C\kappa h^{r}\|\theta^n\|_{1+r,\O} \|\varntheta\|_{1,\O}\\
& \leq Ch^{2r}\|\theta^n\|^2_{1+r,\O} + \frac{\aATh\kappa}{10} \|\varntheta\|^2_{1,\O}. 	 
\end{split}
\end{equation}

Now, from bounds \eqref{error:eq:heat}, \eqref{term:IF}, \eqref{term:IM}, \eqref{term:IB} and \eqref{term:IA}, the definition and equivalence of the norms  
$||| \cdot |||_{T,h}$ (cf.~\eqref{norm:heatfluid}) and $\|\cdot\|_{0,\O}$, together  with  the coercivity of bilinear form $A^h_T(\cdot,\cdot)$, we have 
\begin{equation}\label{error:estima:heat}
\begin{split}
\frac{1}{2 \Delta t}& \left(|||\varntheta|||^2_{T,h}- |||\varphi^{n-1}_{\theta}|||^2_{T,h} \right) 
+  \frac{\aATh \kappa}{2} \|\varntheta\|^2_{1,\O} 
\leq  C\|\varntheta\|^{2}_{0,\O} +  \kappa^{-1} \|\theta^n\|^2_{1,\O} \|\etanpsi\|^{2}_{2,\O}\\
& \quad + C \left[ \kappa^{-1} (\|\psi^n\|^2_{2,\O}  +\|\etanpsi\|^2_{2,\O}) \right] \|\etantheta\|^{2}_{1,\O}  +C|||\varntheta|||^2_{T,h} \\
&\quad +	Ch^{2r} \|f_{\theta}\|^2_{\L^{\infty}(t_{n-1},t_{n}; \H^r(\O_h))} +C_1\kappa^{-1}h^{2\min\{s,r\}} \|\psi^n\|^2_{2+s,\O}\|\theta^n\|^2_{2+r,\O} \\
&  \quad +	C\|\partial_{tt} \theta\|_{\L^{1}(t_{n-1},t_n; \L^{2}(\O))} |||\varntheta|||_{T,h} +\frac{C}{\Delta t} h^{r} \|\partial_t \theta \|_{\L^1(t_{n-1},t_n; \H^{r}(\O))}|||\varntheta|||_{T,h}. 
	\end{split}
\end{equation}

\paragraph{Step 4: Combining the steps 2, 3 and  the discrete Gronwall inequality.}
In this  last part, we  combine Steps $2$ and $3$. Indeed, we proceed to multiply by $2\Delta t$ the estimates \eqref{error:estima:NS} and \eqref{error:estima:heat}, then employing the Young inequality to the resulting bounds
and  iterating $j=0,\ldots,n$, we have 

\begin{equation*}
\begin{split}
|||\varnpsi|||^2_{F,h} & +|||\varntheta|||^2_{T,h} 
+ \Delta t \sum_{j=0}^n  \|\varphi_{\psi}^j\|^2_{2,\O} 
+  \Delta t \sum_{j=0}^n \|\varphi_{\theta}^j\|^2_{1,\O} \\
& \leq  C \Delta t \sum_{j=0}^n\left[ 1  + \nu^{-3}\left(\|\eta_{\psi}^j \|^4_{2,\O} +\|\psi^j\|^4_{2,\O} \right)  \right] |||\varphi_{\psi}^j|||^{2}_{F,h} 
+  C \Delta t \sum_{j=0}^n\left[ 1 +\|\gb^j\|^2_{\infty,\O} \right] ||| \varphi_{\theta}^j|||^{2}_{T,h} \\
& \quad + C \Delta t \sum_{j=0}^n\left[ \nu^{-1} (\|\psi^j\|^2_{2,\O}+\|\eta_{\psi}^j\|^2_{2,\O})+ \kappa^{-1} \|\theta^j\|^2_{1,\O}\right] \|\eta_{\psi}^j\|^{2}_{2,\O} \\
& \quad 
+ C\Delta t \sum_{j=0}^n \left[ \kappa^{-1} (\|\psi^j\|^2_{2,\O}  +\|\eta^j_{\psi}\|^2_{2,\O})+\|\gb^j\|^2_{\infty,\O} \right] \|\eta^j_{\theta}\|^{2}_{1,\O}\\
&\quad +C\Delta t h^{2s}\left( \|\fb_{\psi}\|^2_{\L^{\infty}(0,t_{n}; \H^s(\O_h))}+  \|\partial_t \psi \|^2_{\L^1(0,t_n; \H^{1+s}(\O))} + \nu^{-1} \|\psi\|^2_{\L^{\infty}(0,t_n;\H^{2+s}(\O))} \right)\\
& \quad  +\Delta t h^{2\min\{s,r\}}\left(\|\gb\|^2_{\L^{\infty}(0,t_n;\W^{\min\{s,r\}}_{\infty}(\O))} \right) \|\theta\|^2_{\L^{\infty}(0,t_n;\H^{r}(\O))} \\
&\quad +C \Delta th^{2r}\left( \|f_{\theta}\|^2_{\L^{\infty}(0,t_{n}; \H^r(\O_h))} +\|\partial_t \theta \|^2_{\L^1(0,t_n; \H^{r}(\O))}\right) \\
& \quad +C \Delta t \kappa^{-1}h^{2\min\{s,r\}}\left(  \|\psi\|^2_{\L^{\infty}(0,t_n; \H^{2+s}(\O))} + \|\theta\|^2_{\L^{\infty}(0,t_n; \H^{1+r}(\O))} \right) \\
&  \quad +	C \Delta t^2\left( \|\partial_{tt} \theta\|^2_{\L^{1}(0,t_n; \L^{2}(\O))} + \|\partial_{tt} \psi \|^2_{\L^{1}(0,t_n; \H^{1}(\O))} \right)
+ \aMFh \|\varphi^{0}_{\psi}\|^2_{1,\O} + \aMTh \|\varphi^{0}_{\theta}\|^2_{0,\O}.
	\end{split}
\end{equation*}

Thus, applying the discrete Gronwall inequality (cf. Lemma \ref{discrete:gronwall}), choosing $(\psi^0_h,\theta^0_h)=(\psi_I(0),\theta_I(0))$ and using   Propositions~\ref{approx-virtual} and \ref{approx-virtual-temp} along with the equivalence of norms, we have
\begin{equation*}
\begin{split}
( \|\varphi_{\psi}^n\|^2_{1,\O} +   \|\varphi_{\theta}^n\|^2_{0,\O}) +  \Delta t \sum_{j=1}^n (\|\varphi_{\psi}^j\|^2_{2,\O}+  \|\varphi_{\theta}^j\|^2_{1,\O})	
 \leq C(h^{2\min\{s,r\}} + \Delta t^2),
	\end{split}
\end{equation*}
with  $\frac{1}{2}<s \leq k-1$, $1\leq r \leq \ell$ and $C>0$ is independent of mesh
size $h$ and time step $\Delta t$.

Finally, the desired result follows from the above estimate, 
triangular inequality, together with Propositions~\ref{approx-virtual-temp} 
and~\ref{prop:engy:bihar}.
\end{proof}

\begin{remark}
In the present framework, the main advantage of using an energy projector $\mS_h\psi^n$,
as we do for the stream-function space, is to obtain a shorter proof.
Nevertheless, for the temperature variable we do not use an energy projector, but resort to a standard interpolant $\theta_I^n$. 
The reason is that we need also some local approximation properties for the temperature field that the energy projection operator, 
being global in nature, would not have.
\end{remark}

\setcounter{equation}{0}	
\section{Numerical results}\label{Numerical:experiments}

In this section we carry out numerical experiments in order 
to support our analytical results and illustrate the 
performance of the proposed fully-discrete virtual scheme~\eqref{eq:VEM2}
for the Boussinesq system. In all examples, we use
the lowest order virtual element spaces $\W_2^h$ and $\H_1^h$, for 
the stream-function and temperature fields, respectively. 
At each  discrete time, the nonlinear fully-discrete system \eqref{eq:VEM2} is linearized 
by using the Newton method. 
 For the first time step, we take as initial guess 
$(\psi_h^{{\tt in}},\theta_h^{{\tt in}})=(0,0)$, and  for all $n \geq 1$ 
we take	$(\psi_h^{{\tt in}},\theta_h^{{\tt in}}) = (\psi_h^{n-1},\theta_h^{n-1})$.
The iterations are finalized when the 
$\ell^{\infty}$-norm of the global incremental discrete solution drop 
below a fixed tolerance of $\Tol=10^{-8}$.

 The domain $\Omega$ is partitioned using the following sequences of polygonal meshes (an example for each family is shown in Figure~\ref{fig:meshsamples}):
\begin{multicols}{2}
\begin{itemize}
	\item $\CT^1_h$: Distorted quadrilaterals meshes;
	\item $\CT^2_h$: Triangular meshes;
	\item $\CT^3_h$: Voronoi meshes; 
	\item $\CT^4_h$: Distorted concave rhombic quadrilaterals.
\end{itemize}  
\end{multicols}

\begin{figure}[!h]
	\centering 
	\subfigure[mesh $\CT^1_h$]{\includegraphics[width=0.23\linewidth, height=0.23\textwidth]{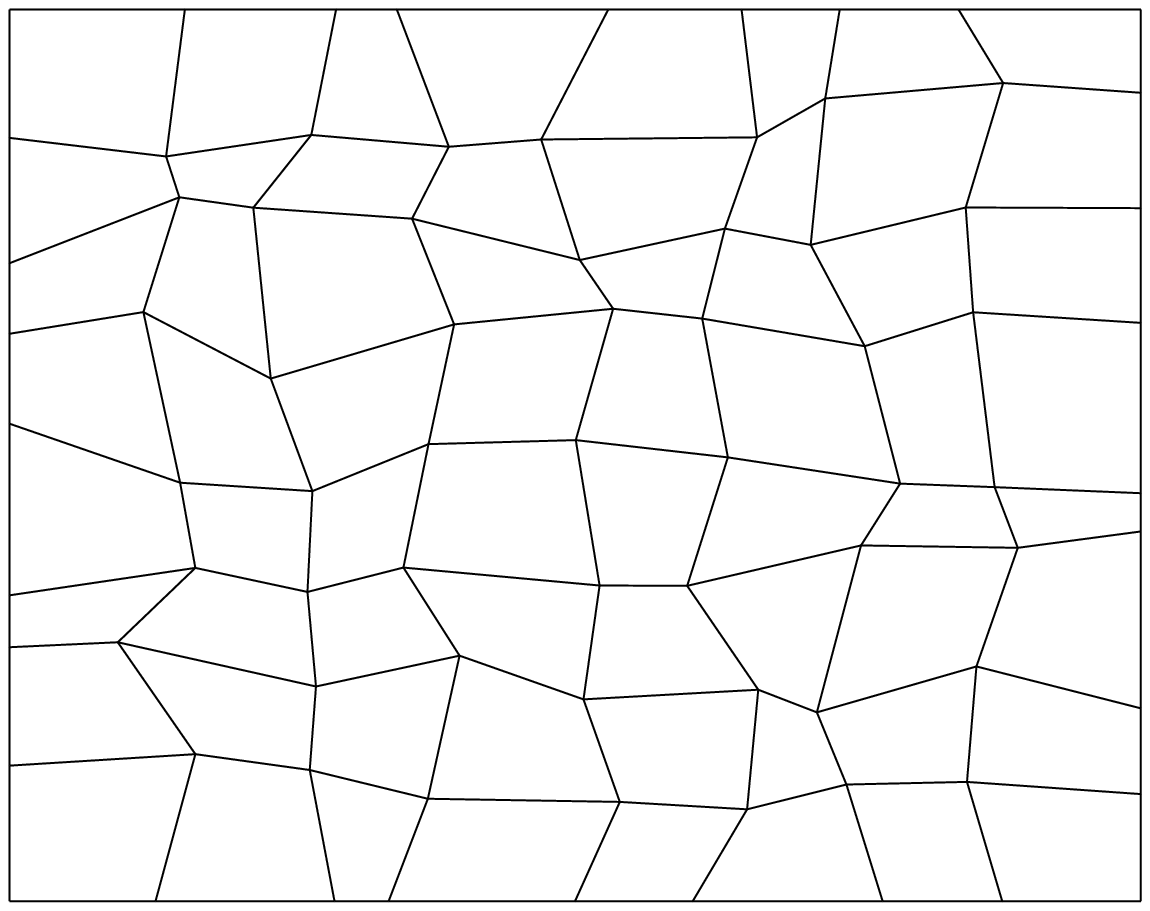}}\hspace*{-0.35cm}
	\subfigure[mesh $\CT^2_h$]{\includegraphics[width=0.23\linewidth, height=0.23\textwidth]{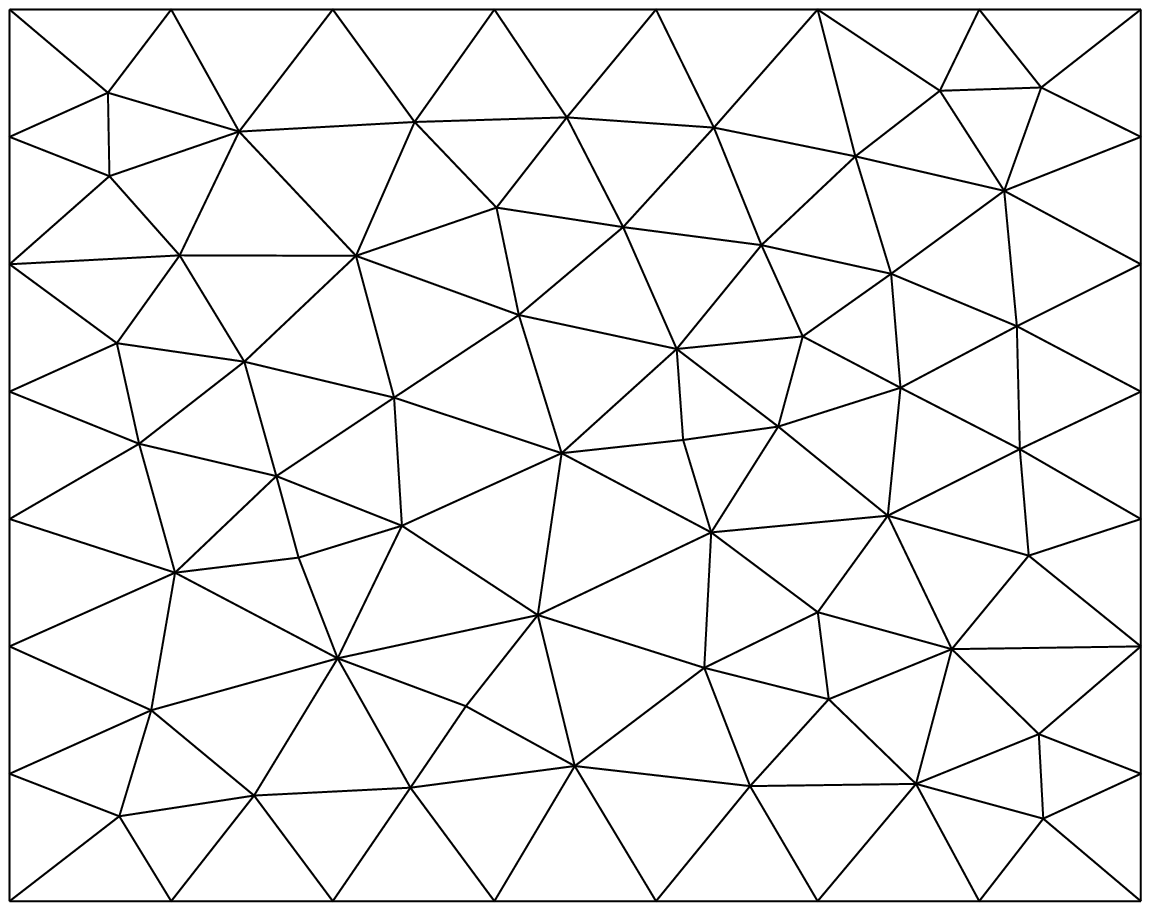}}\hspace*{-0.35cm}
	\subfigure[mesh $\CT^3_h$]{\includegraphics[width=0.23\linewidth, height=0.23\textwidth]{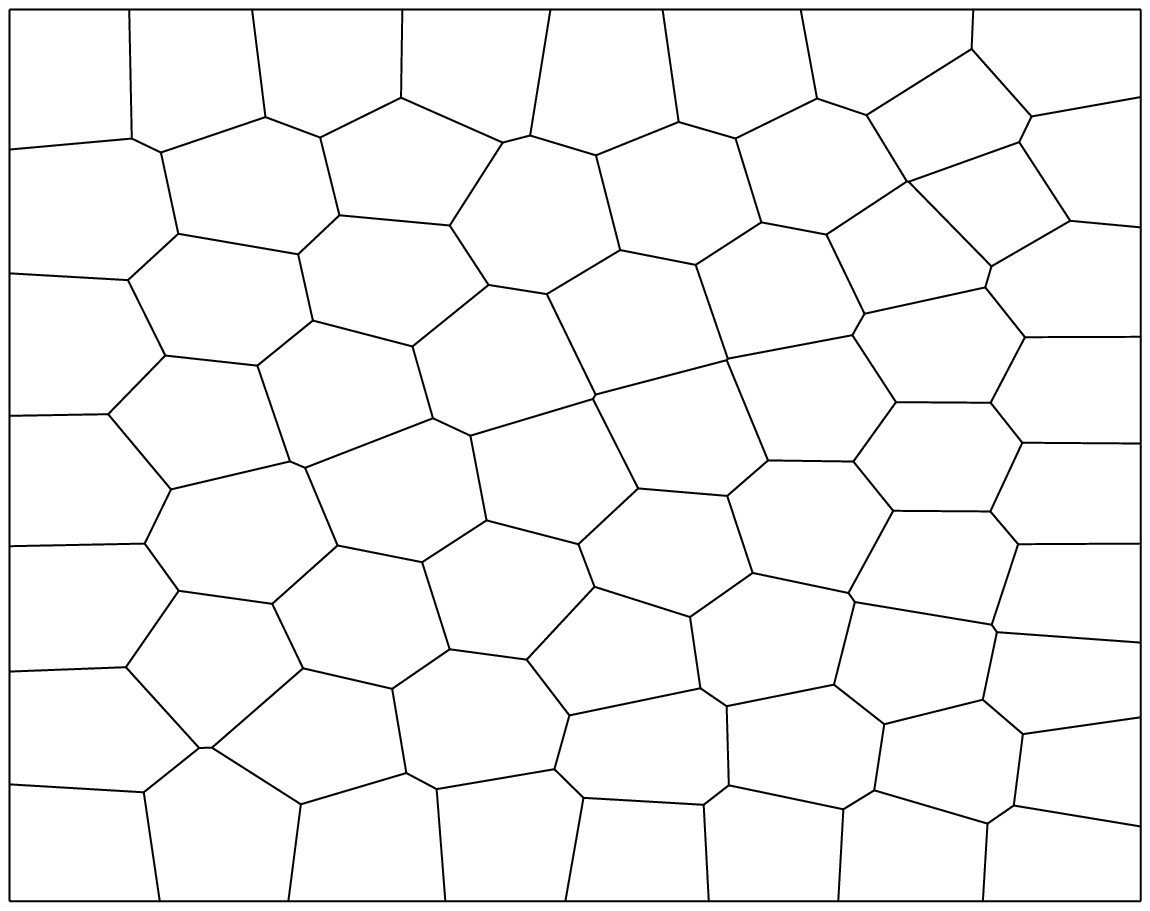}}\hspace*{-0.35cm}
	\subfigure[mesh $\CT^4_h$]{\includegraphics[width=0.23\linewidth, height=0.23\textwidth]{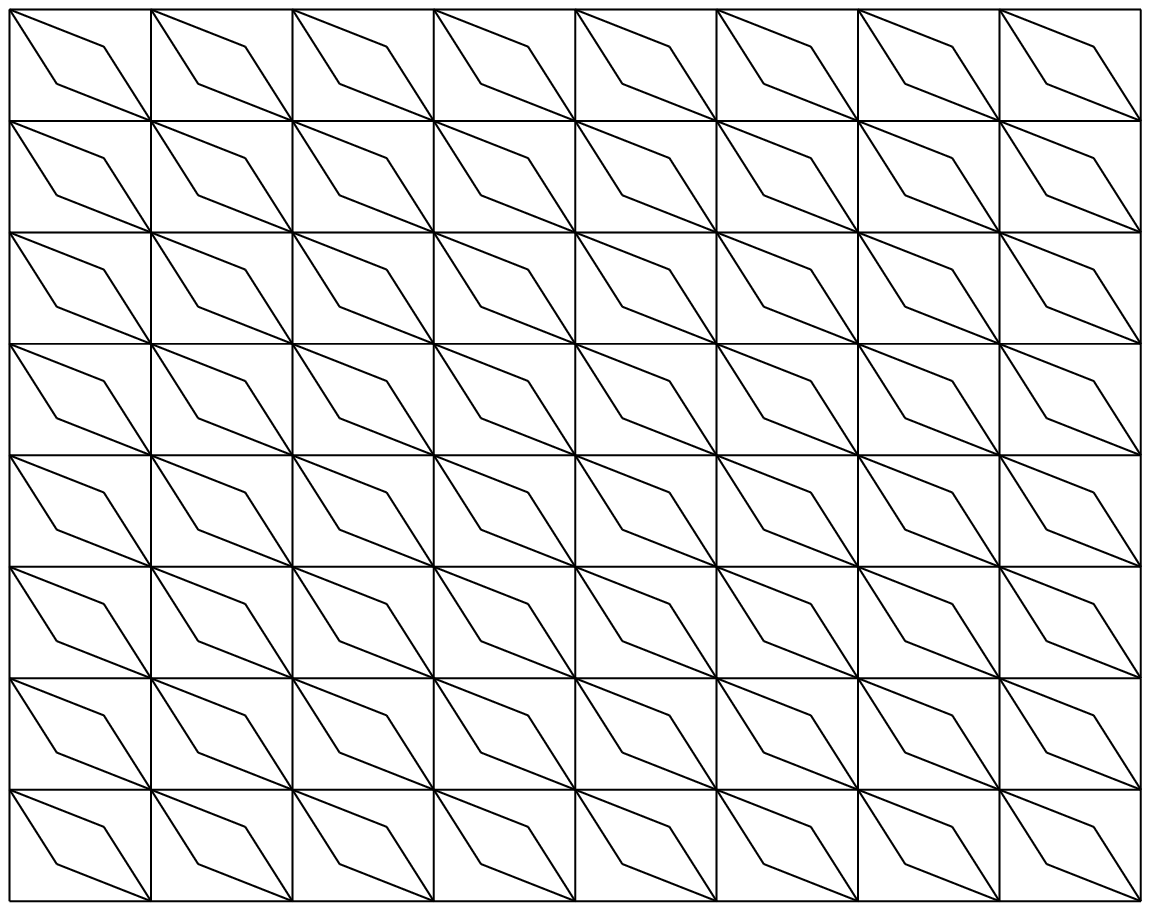}}
	\caption{Domain discretized with different meshes.}
	\label{fig:meshsamples}
\end{figure} 

In order to test the convergence properties of the proposed VEM,
we measure some errors as the difference between
the exact solutions $(\psi, \theta)$ and adequate projections
of the numerical solution $(\psi^n_h,\theta^n_h)$. More precisely, 
we consider the following quantities:
\begin{align}
	{\tt E}(\psi,\L^2, \H^2)
	&:=\Big(\Delta t  \sum_{n=1}^{N} |\psi(t_n)-
	\Pi^{\D,2} \psi^n_h|^2_{2,h} \Big)^{1/2},\qquad
	{\tt E}(\theta,\L^2, \H^1)
	:=\Big(\Delta t  \sum_{n=1}^{N} |\theta(t_n)-
	\Pi^{\nabla,1} \theta^n_h|^2_{1,h} \Big)^{1/2}, \label{error:L2Hi}\\
	{\tt E}(\psi,\L^{\infty}, \H^1)
	&:=|\psi(T)-\Pi^{\D,2} \psi^N_h|_{1,h},\qquad \qquad \qquad \quad
	{\tt E}(\theta,\L^{\infty}, \L^2)
	:=\|\theta(T)-\Pi^{\nabla,1} \theta^N_h\|_{0,\O}. \label{error:LinfityHi}
\end{align}

\noindent
Accordingly to Theorem \ref{converg:fullydisc}, the expected convergence rate for the sum of the above norms is $\mathcal{O}(h + \Delta t)$.

\subsection{Accuracy assessment}
In our first example, we illustrate the accuracy in space and time of the proposed  
VEM~\eqref{eq:VEM2}, considering a manufactured exact solution on the square 
domain $\O := (0,1)^2$, the time interval $[0,1]$ and force per unit mass 
$\gb=(0,-1)^T$. We solve the Boussinesq system~\eqref{Bouss:Eq}, 
taking the load terms $\fb_{\psi}$ and $f_{\theta}$,  boundary and initial 
conditions in such a way that the analytical solution is given by:
\begin{equation*}
	\bu(x,y,t)= \begin{pmatrix}
		u_1(x,y,t) \\
		u_2(x,y,t)
	\end{pmatrix}=\begin{pmatrix}
		\: \:(e^{10(t-1)}-e^{-10})\:x^2(1-x)^2(2y-6y^2+4y^3)\\
		-\: (e^{10(t-1)}-e^{-10})y^2(1-y)^2(2x-6x^2+4x^3)
	\end{pmatrix},  
\end{equation*}
\begin{equation*}
	p(x,y,t) =(e^{10(t-1)}-e^{-10})( \sin(x)\cos(y) + (\cos(1)-1)\sin(1) ), 
\end{equation*}
\begin{equation*}
	\psi(x,y,t)=(e^{10(t-1)}-e^{-10})x^2(1-x)^2y^2(1-y)^2 \qquad  \text{and} \qquad \theta(x,y,t)=u_1(x,y,t)+u_2(x,y,t).
\end{equation*}

In order to see the linear trend of the stream-function and temperature 
errors~\eqref{error:L2Hi},  predicted  by Theorem~\ref{converg:fullydisc}, 
we  refine simultaneously in space and time. 
More precisely, for each mesh family we consider the  mesh refinements with
$h=1/4, 1/8, 1/16, 1/32$, and we use the same uniform refinements  
for the time variable. In particular, for the mesh $\O^1_h$, it can be seen 
along the diagonal of Table~\ref{tabla1-1}, the expected first order  
convergence for the stream-function and temperature errors~\eqref{error:L2Hi}.

In Figure~\ref{fig1-1}, we display the errors~\eqref{error:L2Hi}  for the same simultaneous time and space
refinements ($h=\Delta t =2^{-i}$, with $i=2,\ldots,5$), using the four mesh families. We notice that the rates of convergence predicted in Theorem~\ref{converg:fullydisc}  are attained by both unknowns.

\begin{table}[!h] 
\begin{center}
{\small \begin{tabular}{llcccccccccc}
\hline
\hline \\
\multicolumn{7}{c}{${\tt E}(\psi,\L^2;\H^2)$} \\
\hline
\hline
$\dofs$ & \diagbox[innerwidth=0.8cm]{$h$}{$\Delta t$}& $1/4$ &$1/8$& $1/16$&$1/32$&$1/64$\\
\hline  
\hline 
36   &$1/4$  &\fbox{1.88912e-2} &1.42183e-2 &1.16131e-2 &1.02912e-2 &\fbox{9.63665e-3}  \\ 
196  &$1/8$  &1.11333e-2 &\fbox{8.42107e-3} &6.91546e-3 &6.15400e-3 &\fbox{5.77765e-3} \\ 
900  &$1/16$ &4.92223e-3 &3.53363e-3 &\fbox{2.85747e-3} &2.54826e-3 &\fbox{2.40427e-3}\\ 
3844 &$1/32$ &3.61175e-3 &2.11884e-3 &1.46063e-3 &\fbox{1.21158e-3} &\fbox{1.11670e-3}\\ 
15876&$1/64$ &\fbox{3.21002e-3} &\fbox{1.64565e-3} &\fbox{9.22443e-4}&6.49802e-4 &\fbox{5.59824e-4} \\
\hline\hline 				\\
\multicolumn{7}{c}{${\tt E}(\theta,\L^2;\H^1)$} \\
\hline\hline
36   &$1/4$  &\fbox{1.74892e-2} &1.34200e-2 &1.11391e-2 &9.96756e-3  &\fbox{9.38232e-3} \\
196  &$1/8$  &1.02277e-2 &\fbox{7.88174e-3} &6.66404e-3 &6.05736e-3  &\fbox{5.75702e-3} \\
900  &$1/16$ &5.32067e-3 &3.65373e-3 &\fbox{2.93777e-3} &2.64415e-3  &\fbox{2.51594e-3} \\
3844 &$1/32$ &3.80377e-3 &2.18463e-3 &1.49484e-3 &\fbox{1.24874e-3}  &\fbox{1.16084e-3} \\
15876&$1/64$ &\fbox{3.37157e-3} &\fbox{1.71644e-3} &\fbox{9.52229e-4}&6.64250e-4 &\fbox{5.69713e-4}\\
\hline 				
\hline
\end{tabular}}
\end{center} 
\caption{Accuracy assessment. Errors~\eqref{error:L2Hi}  using the VEM~\eqref{eq:VEM2}, with polynomial 
	degrees $(k,\ell)=(2,1)$, physical parameters $\nu=\kappa=1$ and the mesh family mesh $\CT^1_h$.}
\label{tabla1-1}
\end{table}


\begin{figure*}[!h]
	\centering
	\subfigure[Stream-function errors]{%
		\includegraphics[width=0.40\linewidth, height=0.43\textwidth]{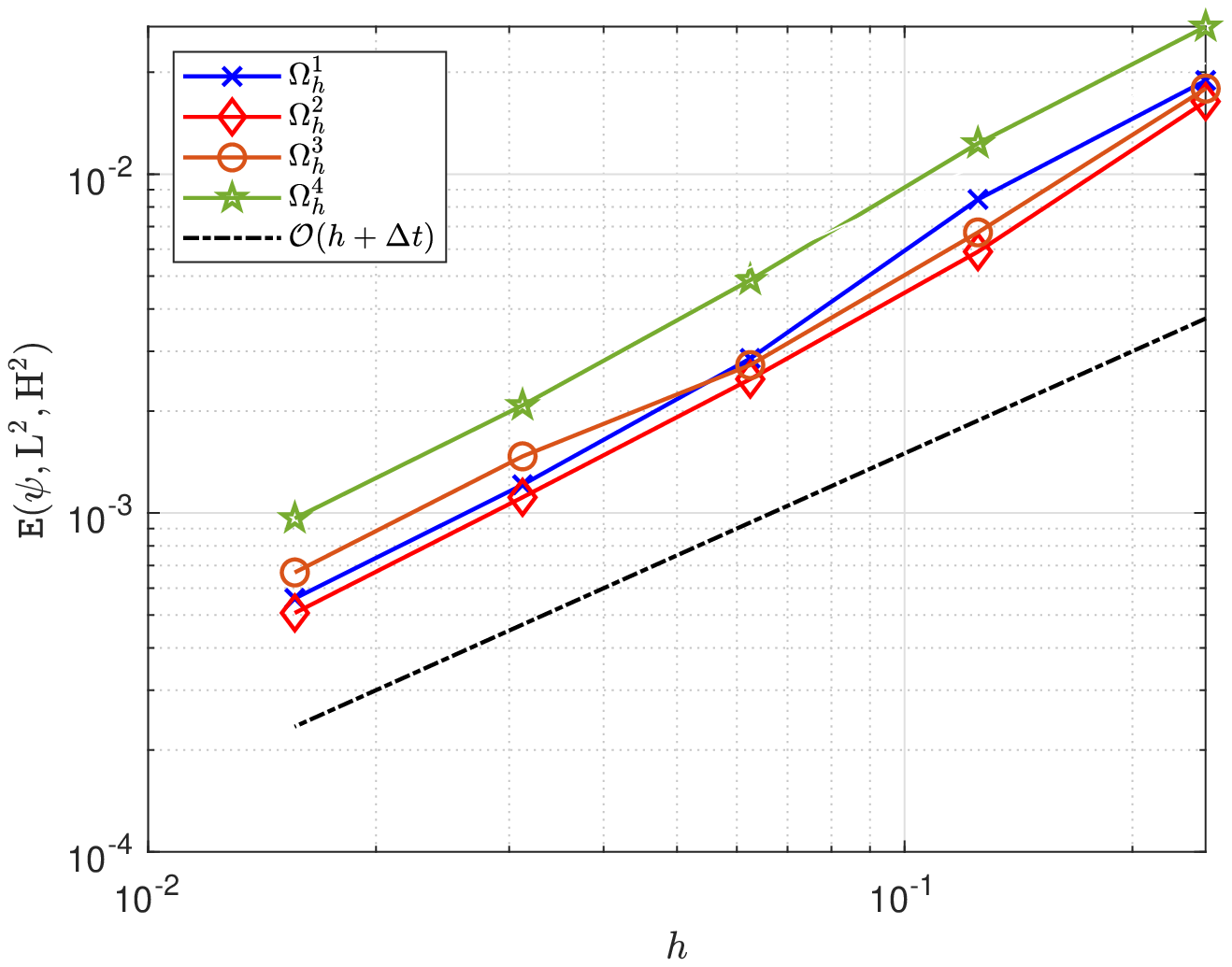}
		\label{errors-stream} } 
	\subfigure[Temperature errors]{
		\includegraphics[width=0.40\linewidth, height=0.43\textwidth]{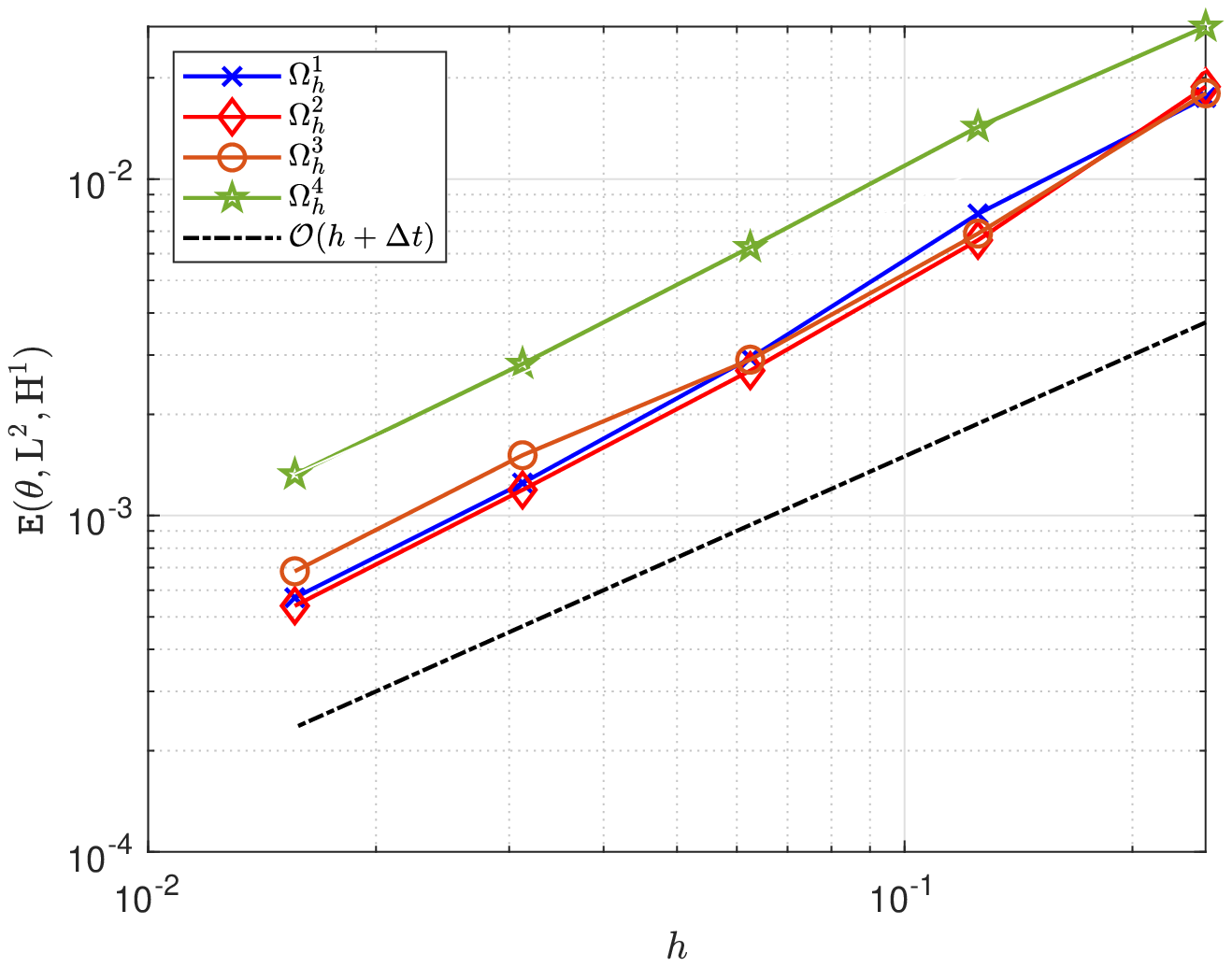}
		\label{errors-temp}} 
 	\caption{Accuracy assessment. Errors~\eqref{error:L2Hi} for simultaneous space and time refinements,  
 		using the VEM~\eqref{eq:VEM2}  with polynomial degrees  $(k,\ell)=(2,1)$, physical parameters 
 		$\nu=\kappa=1$ and the mesh families $\O^i_h$, $i=1,\ldots, 4$.}
	\label{fig1-1}
\end{figure*}

In order to study the trend of the stream-function and temperature errors~\eqref{error:LinfityHi}, 
we show in Table~\ref{tabla1-2} the results considering again the mesh $\O^1_h$, with
$h=\Delta t =2^{-i}$, with $i=2,\ldots,5$. In particular, we can observe that the rate of convergence in the mesh size $h$ seems higher than one; this is not fully surprising, since standard interpolation estimates (in space) for the norms in~\eqref{error:LinfityHi} indicate that, potentially, the discrete space could approximate the exact solution with order $\mathcal{O}(h^2)$. In order to better investigate this aspect, in Figure~\ref{fig1-3} we display the errors~\eqref{error:LinfityHi}  for space and time
refinements given by $h =2^{-i}$ and $\Delta t =4^{-i}$, with  $i=2,\ldots,5$, 
respectively,  using the four mesh families. We notice that the rates of convergence seem indeed quadratic with respect to $h$.

\begin{table}[!h] 
\begin{center}
{\small \begin{tabular}{llcccccccccc}
\hline
\hline \\
\multicolumn{7}{c}{${\tt E}(\psi,\L^{\infty};\H^1)$} \\
\hline
\hline
$\dofs$ & \diagbox[innerwidth=0.8cm]{$h$}{$\Delta t$}& $1/4$ &$1/8$& $1/16$&$1/32$&$1/64$\\
\hline  
\hline  
36   &$1/4$  &4.30301e-3  & 4.50090e-3 & 4.65255e-3& 4.74590e-3 &4.79749e-3\\ 
196  &$1/8$  &2.03865e-3  & 2.20110e-3 & \fbox{2.33234e-3}& 2.41662e-3 &2.46443e-3\\ 
900  &$1/16$ &2.38767e-4  & 2.11074e-4 & 3.61809e-4& 4.80109e-4 &\fbox{5.49619e-4}\\ 
3844 &$1/32$ &7.26027e-4  & 4.35284e-4 & 2.05347e-4& 6.71747e-5 &4.99331e-5\\ 
15876&$1/64$ &8.16241e-4  & 5.20174e-4 & 2.84604e-4& 1.34953e-4 &5.10645e-5 \\
\hline\hline 				\\
\multicolumn{7}{c}{${\tt E}(\theta,\L^{\infty};\L^2)$} \\
\hline\hline
36   &$1/4$  &3.44760e-3  & 3.94792e-3 & 4.28939e-3 & 4.48462e-3 &4.58811e-3 \\
196  &$1/8$  &9.85211e-4  & 1.44875e-3 & \fbox{1.82900e-3} & 2.06308e-3 &2.19159e-3\\
900  &$1/16$ &5.96219e-4  & 2.98014e-4 & 3.26274e-4 & 4.64998e-4 &\fbox{5.57065e-4} \\
3844 &$1/32$ &8.26668e-4  & 4.90632e-4 & 2.31786e-4 & 9.52686e-5 &9.44396e-5 \\
15876&$1/64$ &8.90387e-4  & 5.68492e-4 & 3.13988e-4 & 1.53393e-4 &6.48063e-5\\
\hline 				
\hline
\end{tabular}}
\end{center} 
\caption{Accuracy assessment. Errors~\eqref{error:LinfityHi}  using the VEM~\eqref{eq:VEM2}, with polynomial 
degrees $(k,\ell)=(2,1)$, the physical parameters $\nu=\kappa=1$ and the mesh family $\CT^1_h$.}
\label{tabla1-2}
\end{table}


\begin{figure*}[!h]
	\centering
	\subfigure[Stream-function errors]{
		\includegraphics[width=0.40\linewidth, height=0.43\textwidth]{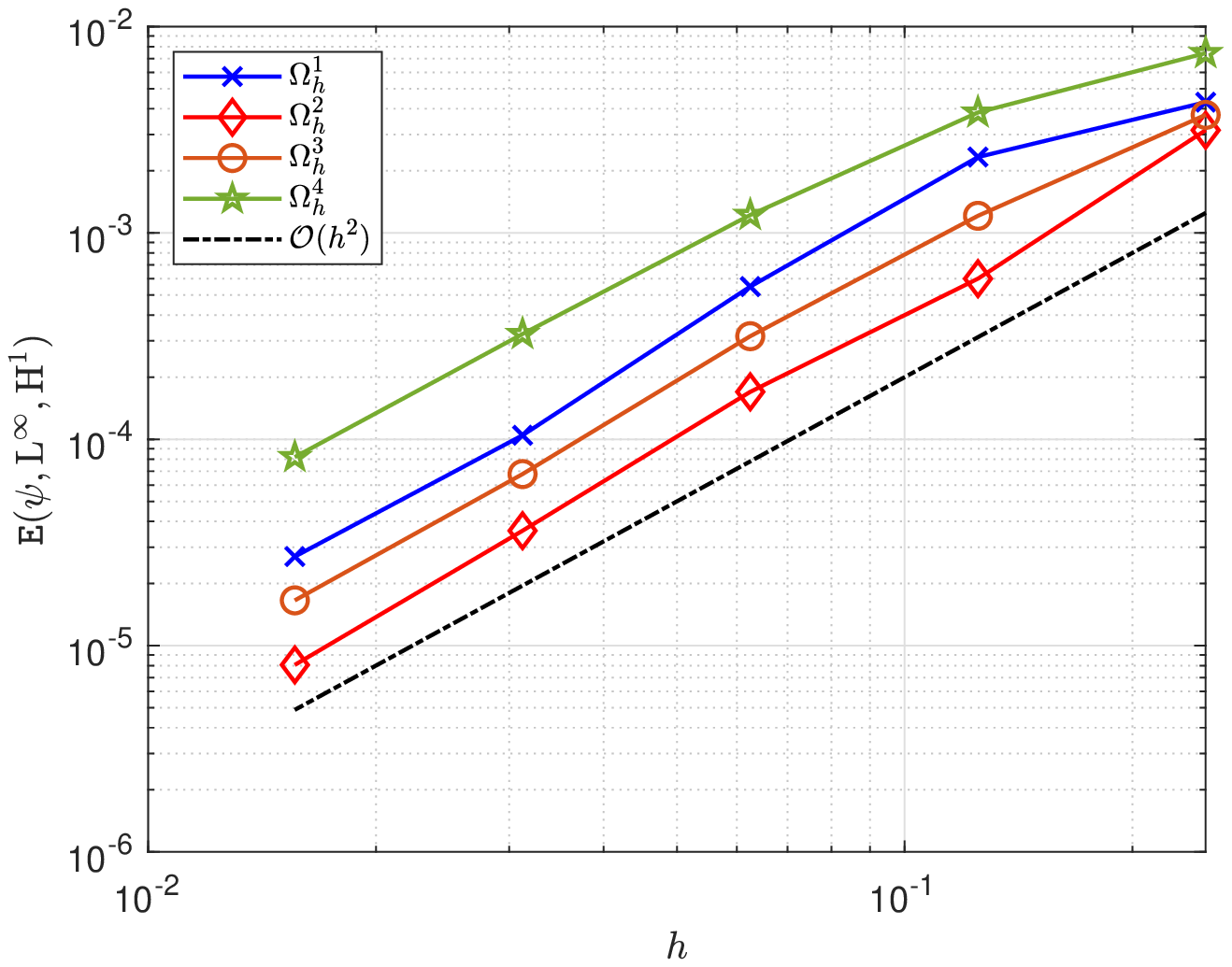}
		\label{errorsfinal-stream} } 
	\subfigure[Temperature errors]{
		\includegraphics[width=0.40\linewidth, height=0.43\textwidth]{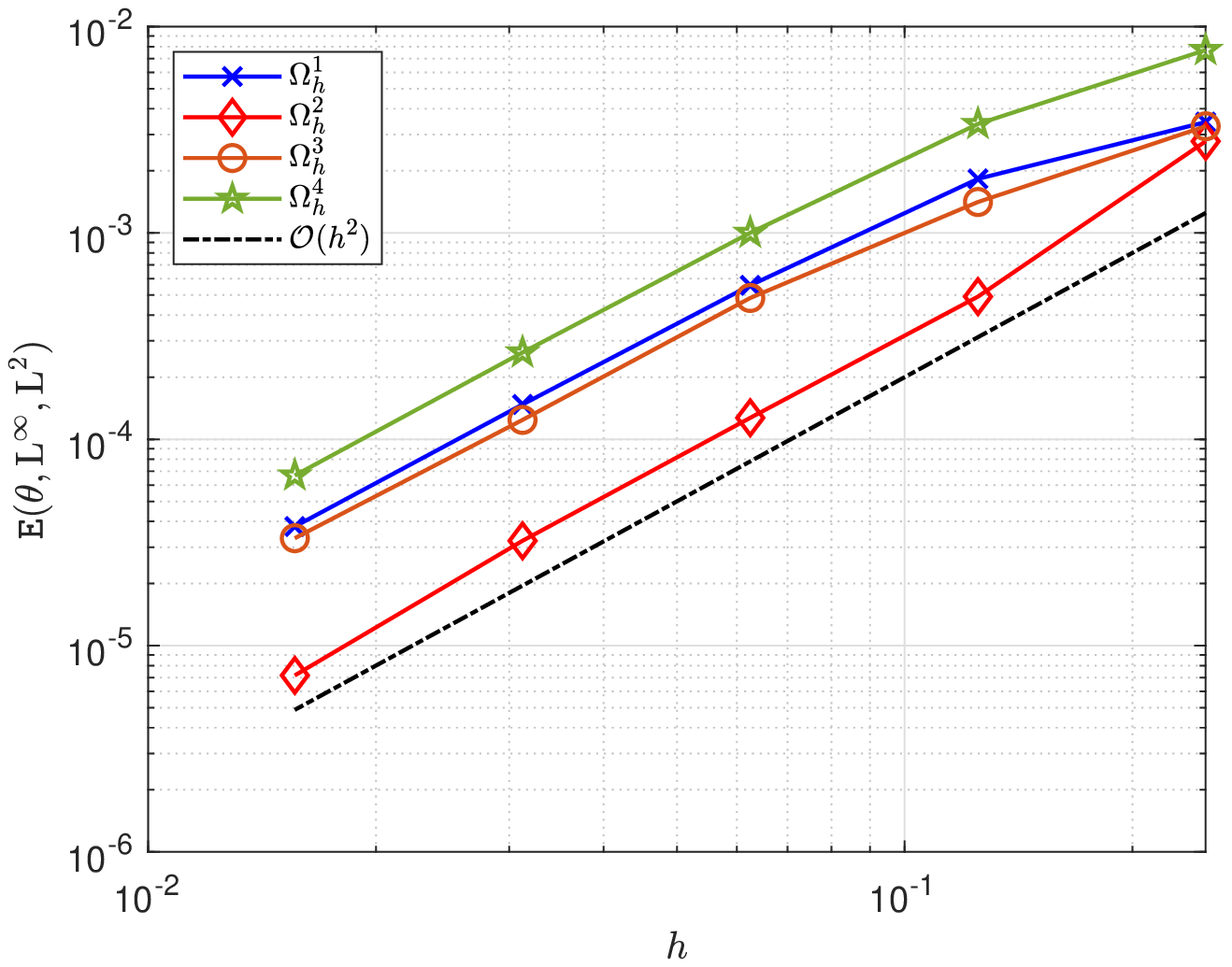}
		\label{errorsfinal-temp}} 
	\caption{Accuracy assessment. Errors~\eqref{error:LinfityHi},  using the VEM~\eqref{eq:VEM2}  with polynomial degrees  $(k,\ell)=(2,1)$, the physical parameters $\nu=\kappa=1$ and the mesh families $\O^i_h$, $i=1,\ldots, 4$.}
	\label{fig1-3}
\end{figure*}

\subsection{Performance of the VEM for small viscosity}
In this test we consider the square domain $\O := (0,1)^2$, 
the time interval $[0,1]$  and  force per unit mass  $\gb=(0,-1)^T$. 
We solve the Boussinesq system~\eqref{Bouss:Eq}, taking the load terms $\fb_{\psi}$ 
and $f_{\theta}$, boundary and initial conditions in such 
a way that the analytical solution is given by:
\begin{equation*}
	\bu(x,y,t)= \begin{pmatrix}
		u_1(x,y,t) \\
		u_2(x,y,t)
	\end{pmatrix}=\begin{pmatrix}
		-\cos(t)\sin(\pi x)\sin(\pi y)\\
		-\cos(t)\cos(\pi x)\cos(\pi y)
	\end{pmatrix},  
\end{equation*}
\begin{equation*}
	p(x,y,t) = \cos(t)(\sin(\pi x)+\cos(\pi y)-2/\pi), 
\end{equation*}
\begin{equation*}
	\psi(x,y,t)= \frac{1}{\pi}\cos(t)\sin(\pi x)\cos(\pi y) \qquad  \text{and} \qquad \theta(x,y,t)=u_1(x,y,t)+u_2(x,y,t).
\end{equation*}	

The purpose of this experiment is to investigate the performance of the VEM~\eqref{eq:VEM2} 
for small viscosity parameters. In Figure~\ref{fig2-1}, we post the errors~\eqref{error:L2Hi} of the stream-function variable 
obtained with the mesh sizes $h=1/4,1/8,1/16$ of $\O^2_h$, considering different values of 
$\nu$ and fixing the time step $\Delta t$ as $1/8$  and  $1/16$ (see Figure~\ref{errors-stream-Deltat:1half8} 
and Figure~\ref{errors-stream-Deltat:1half16}, respectively).  
It can observed that the solutions of our VEM are accurate even for small
values of $\nu$. Larger stream-function errors appear for very small viscosity values. 

We observe that this results are in accordance with the general observation that exactly divergence-free Galerkin methods are more robust with respect to small diffusion parameters, see for instance \cite{SL-paper} (and also \cite{BLV-NS18} in the VEM context). On the other hand, note that the scheme here proposed has no explicit stabilization of the convection term since this is not the focus of the present work (for instance, the natural norm associated to the stability of the discrete problem does not guarantee a robust control on the convection).


\begin{figure*}[hpbt]  
	\centering
	\subfigure[Stream-function errors with $\Delta t =1/8$]{%
		\includegraphics[width=0.5\linewidth, height=0.3\textwidth]{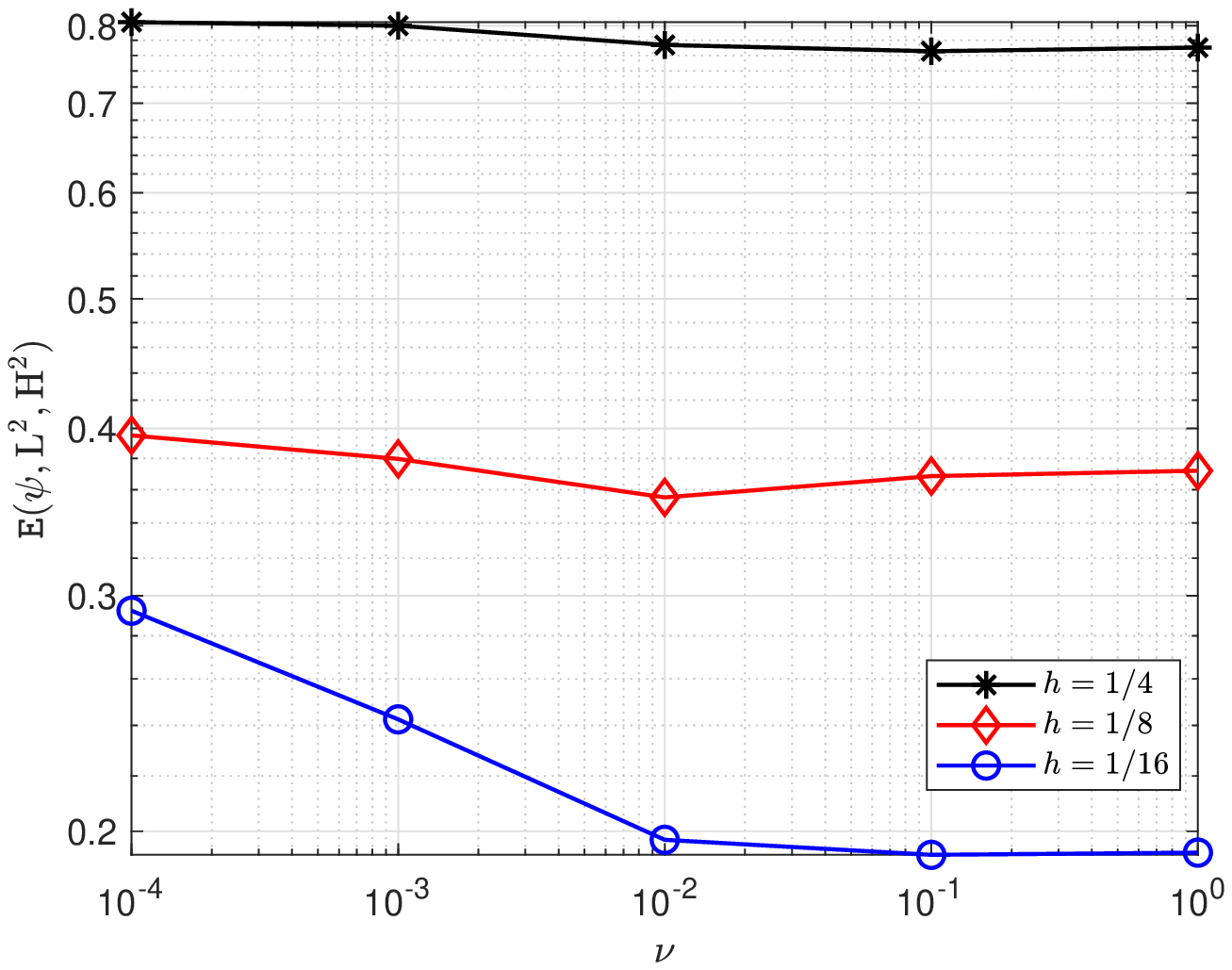}
		\label{errors-stream-Deltat:1half8} } \hspace{-0.7cm}
	\subfigure[Stream-function errors with $\Delta t =1/16$]{
		\includegraphics[width=0.5\linewidth, height=0.3\textwidth]{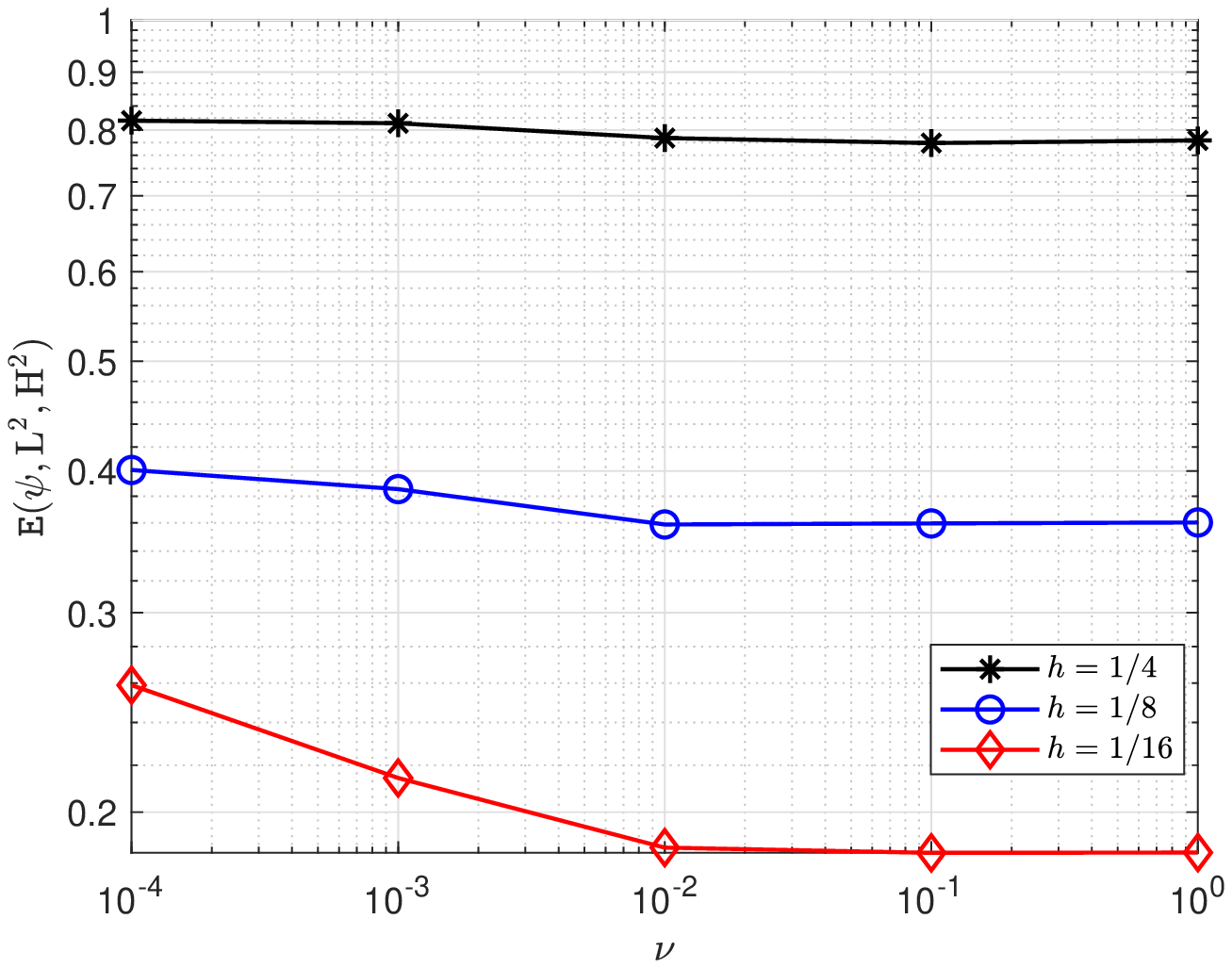}
		\label{errors-stream-Deltat:1half16}} 
	\caption{Small viscosity test. Errors~\eqref{error:L2Hi} of the VEM~\eqref{eq:VEM2}, for different values of $\nu$ and $\kappa=1$, using the meshes $\O^2_h$, polynomial degrees  $(k,\ell)=(2,1)$.}
	\label{fig2-1}
\end{figure*}

\subsection{Natural convection in a cavity with the left wall heating}	
In this last example we consider the $2$D natural convection benchmark problem, 
describing  the behaviour of a incompressible flow in a squared cavity, 
which is heated at the left wall (see \cite{ZHZ2014_camwa,Vahl83,MNZ98,Manzari99,WPW2001}). 
In particular, we consider the  unitary 
square domain $\O=(0,1)^2$. The boundary conditions are given as follows: 
the temperature in the left and right walls are $\theta_{L}=1$ and $\theta_{R}=0$, respectively, 
while in  the horizontal walls is $\partial_{\bn } \theta = 0$ 
(i.e.,  insulated, there is no heat transfer through these walls), no-slip boundary 
conditions are imposed for the fluid flow at all walls. In terms of the stream-function 
these conditions are given by: $\psi = \partial_x\psi =\partial_y \psi=0$ on  $\Gamma$, as shown in Figure~\ref{fig:geometry}. The initial conditions are chosen as  $\psi_0=-x+y$ and $\theta_0=1$ (so that the initial data does not satisfy the boundary conditions). 

We consider the forces $\fb_{\psi}=\0$, $f_{\theta}=0$ and $\gb=\Pr\Ra (0,1)^T$, 
where $\Pr$ and  $\Ra$ denote  the Prandtl and Rayleigh numbers, respectively.
For the numerical experiment, we set the physical parameters as: $\nu=\Pr =0.71$, 
$\Ra \in [10^3,10^6]$ and $\kappa =1$.  

\begin{figure*}[!h]
	\centering
	\subfigure[Boundary conditions]{
\begin{tikzpicture}[scale=0.415]			
		\draw[-] (0,0) -- (12,0)--(12,12)--(0,12)--cycle;
		
		\node[] at (6.0,-0.8) {$\psi=\partial_x\psi=\partial_y\psi=0$, $\partial_{\bn} \theta=0$};
		\node[] at (6.0,12.5) {$\psi=\partial_x\psi=\partial_y\psi=0$, $\partial_{\bn} \theta=0$};

		\node[] at (-2.0,7.0) {$\quad \psi=0$};
		\node[] at (-2.0,6.0) {$\partial_x\psi=0$};
        \node[] at (-2.0,5.0) {$\partial_y\psi=0$};
        \node[] at (-2.0,4.0) {$\: \:\theta_L=1$};
        
        \node[] at (14.0,7.0) {$\quad\psi=0$};
        \node[] at (14.0,6.0) {$\partial_x\psi=0$};
        \node[] at (14.0,5.0) {$\partial_y\psi=0$};
        \node[] at (14.0,4.0) {$\: \: \theta_R=0$};

	\end{tikzpicture}
\label{BC} }
\subfigure[mesh $\O^5_h$ with $h=1/8$]{
	\includegraphics[width=0.37\linewidth, height=0.35\textwidth]{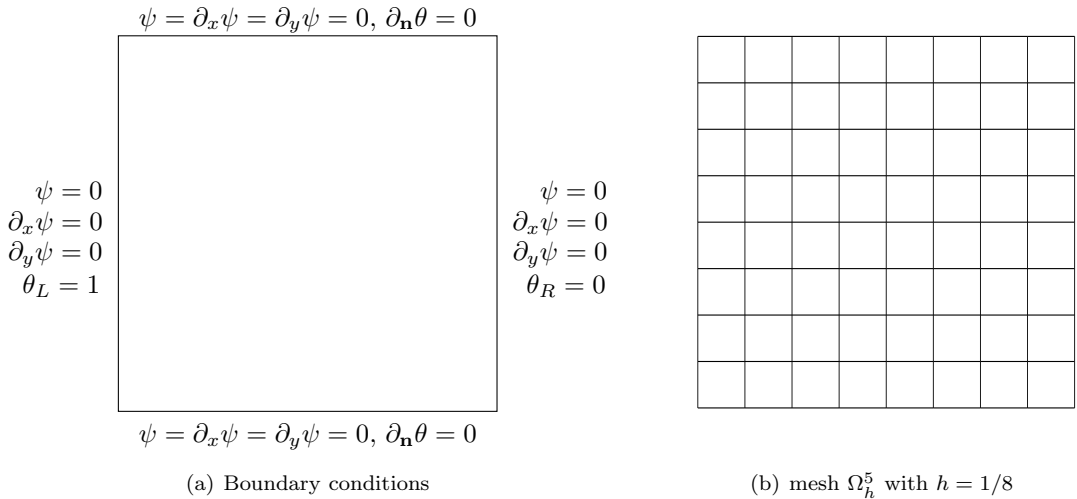}
	\label{mesh:square}} 
	\vspace*{0.2cm}
	\caption{Natural convection cavity. Boundary conditions and domain discretized with mesh $\O^5_h$.}
	\label{fig:geometry}
\end{figure*}

In order to compare our results with the existing bibliographic, 
we decompose the domain $\O$ using  mesh $\O^5_h$ conformed by uniform squares (see Figure~\ref{mesh:square}).
Moreover, the time step is $\Delta t = 10^{-3}$ and final time $T=1$.

Streamlines  and isotherms of the discrete solution obtained with our VEM~\eqref{eq:VEM2}
are posted in Figure~\ref{stream-isotherm}, using $\Ra = 10^3, 10^4, 10^5, 10^6$ and mesh size $h=1/64$.
The results show well agreement with the results presented in the benchmark solutions in~\cite{ZHZ2014_camwa,Vahl83,MNZ98,Manzari99,WPW2001}.

\begin{figure}[!h]
	\begin{center}
		\includegraphics[height=4.6cm, width=4.6cm]{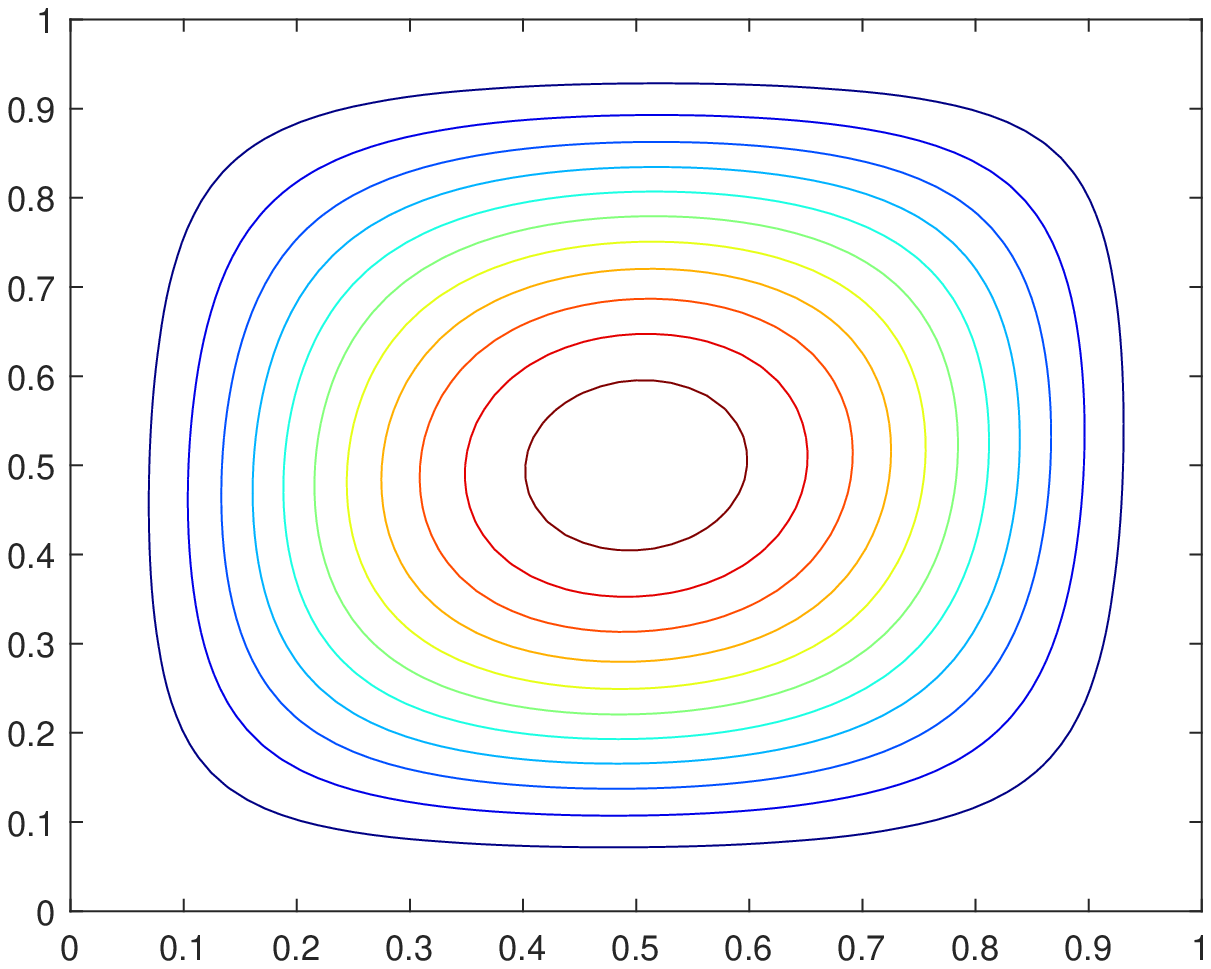} \hspace*{-0.6cm}
		\includegraphics[height=4.6cm, width=4.6cm]{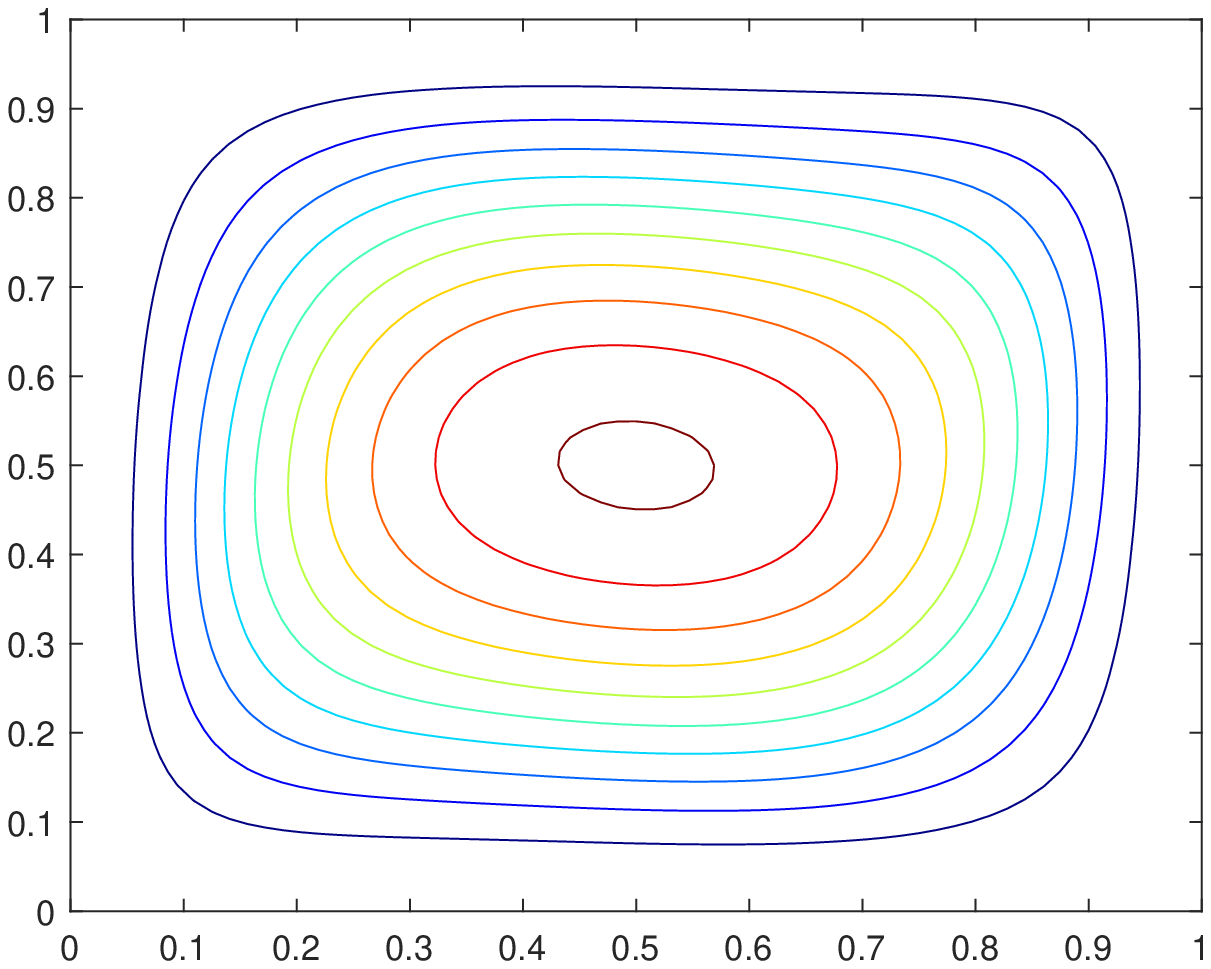} \hspace*{-0.6cm}
		\includegraphics[height=4.6cm, width=4.6cm]{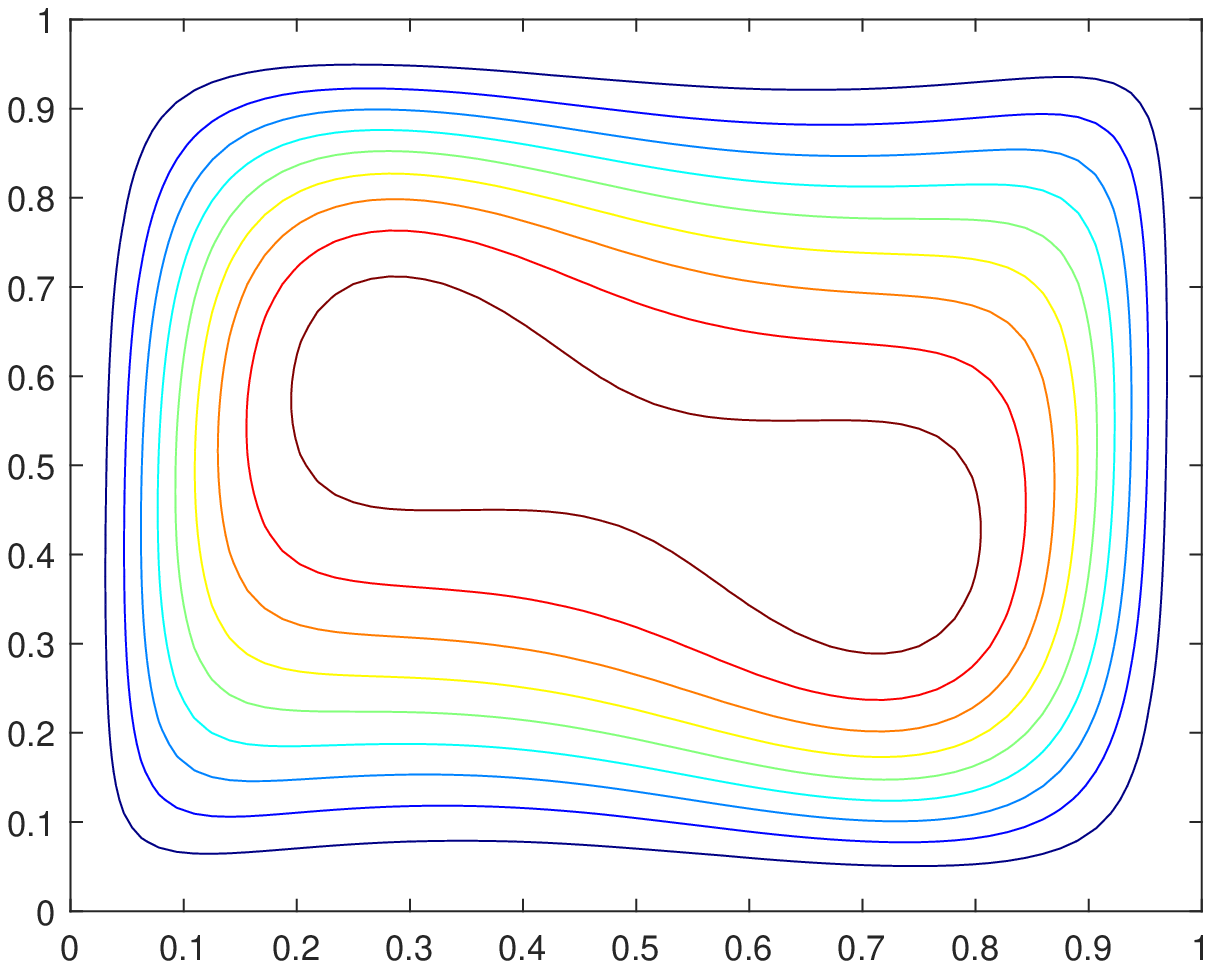} \hspace*{-0.6cm}
		\includegraphics[height=4.6cm, width=4.6cm]{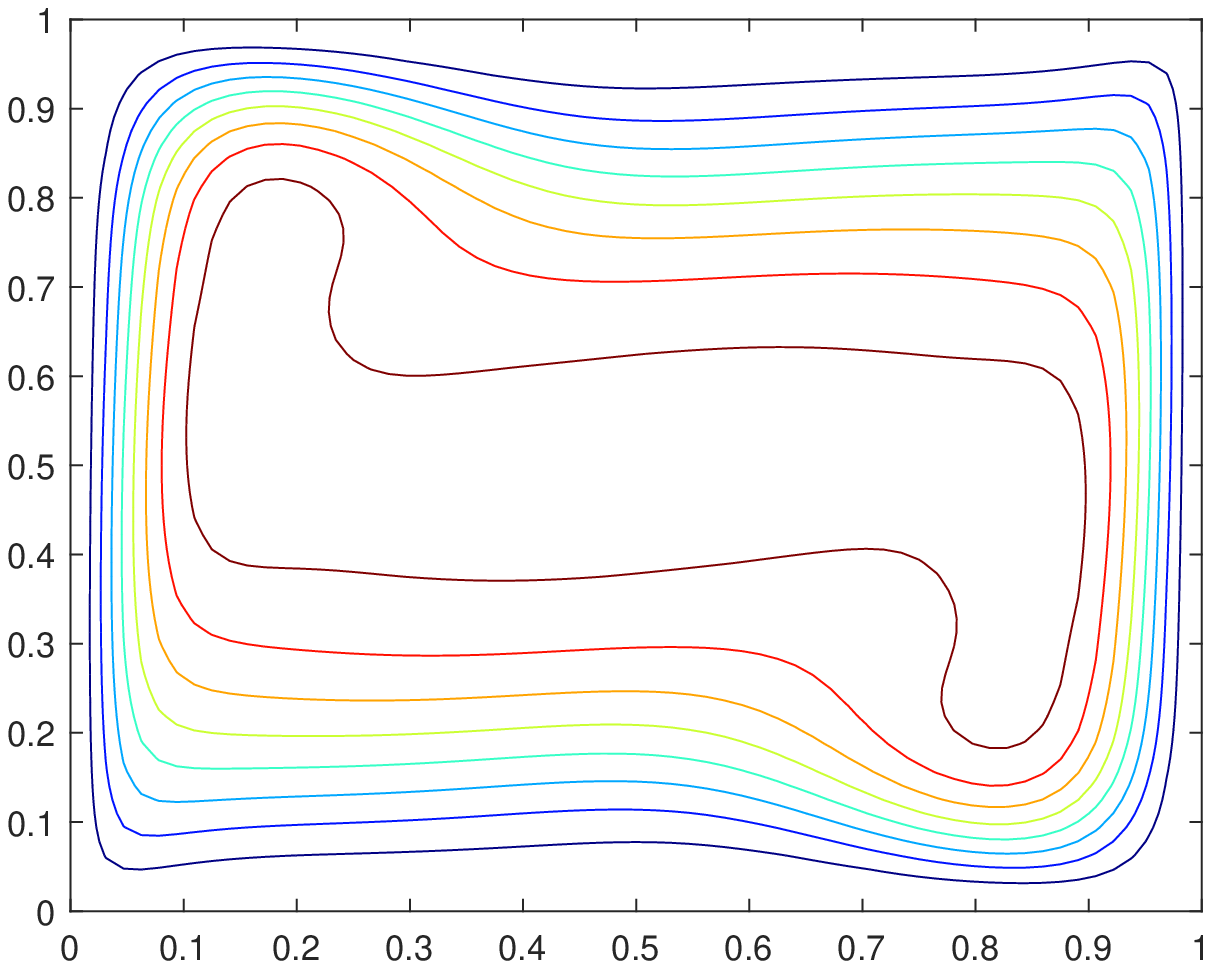}\\
		\includegraphics[height=4.6cm, width=4.6cm]{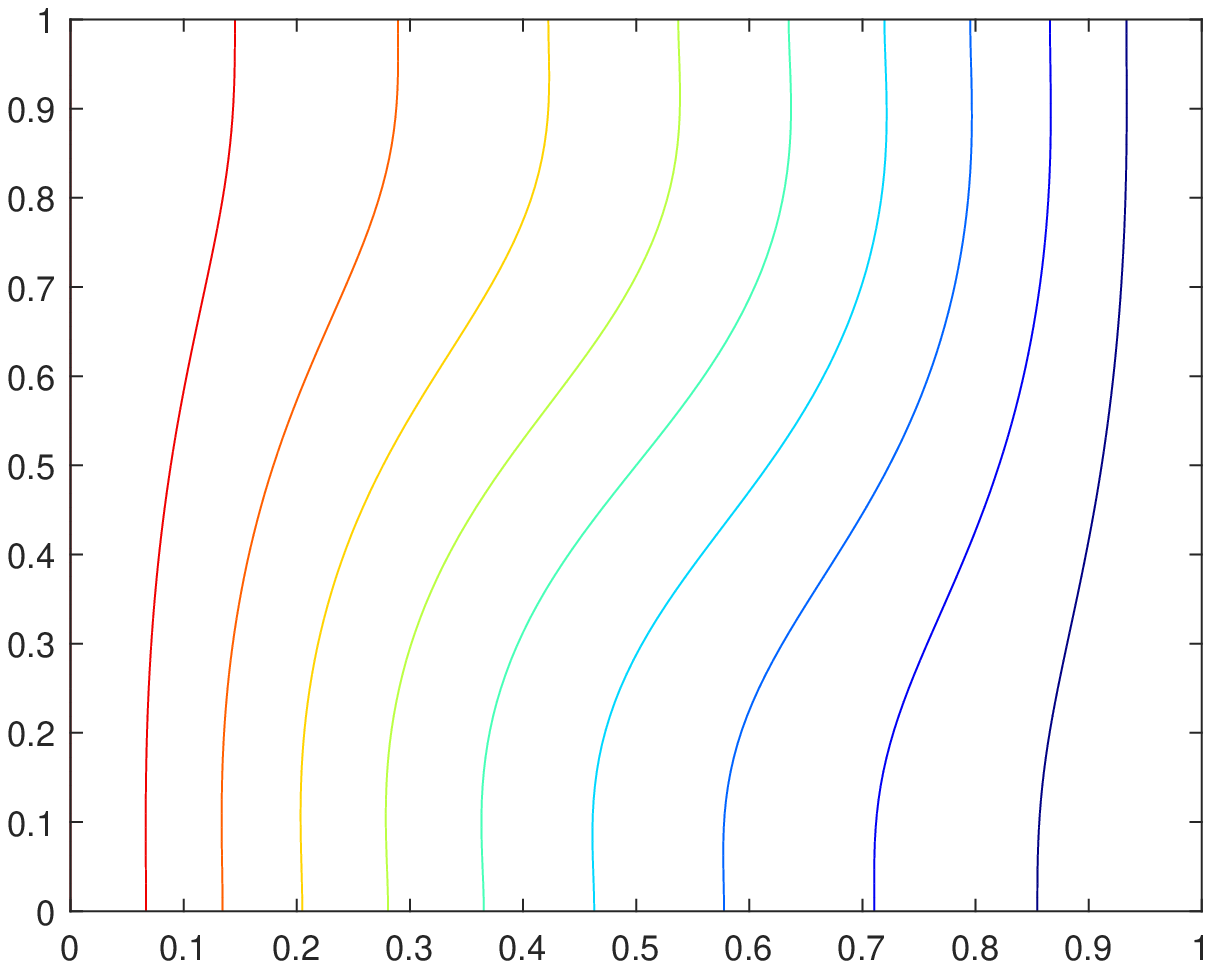} \hspace*{-0.6cm}
		\includegraphics[height=4.6cm, width=4.6cm]{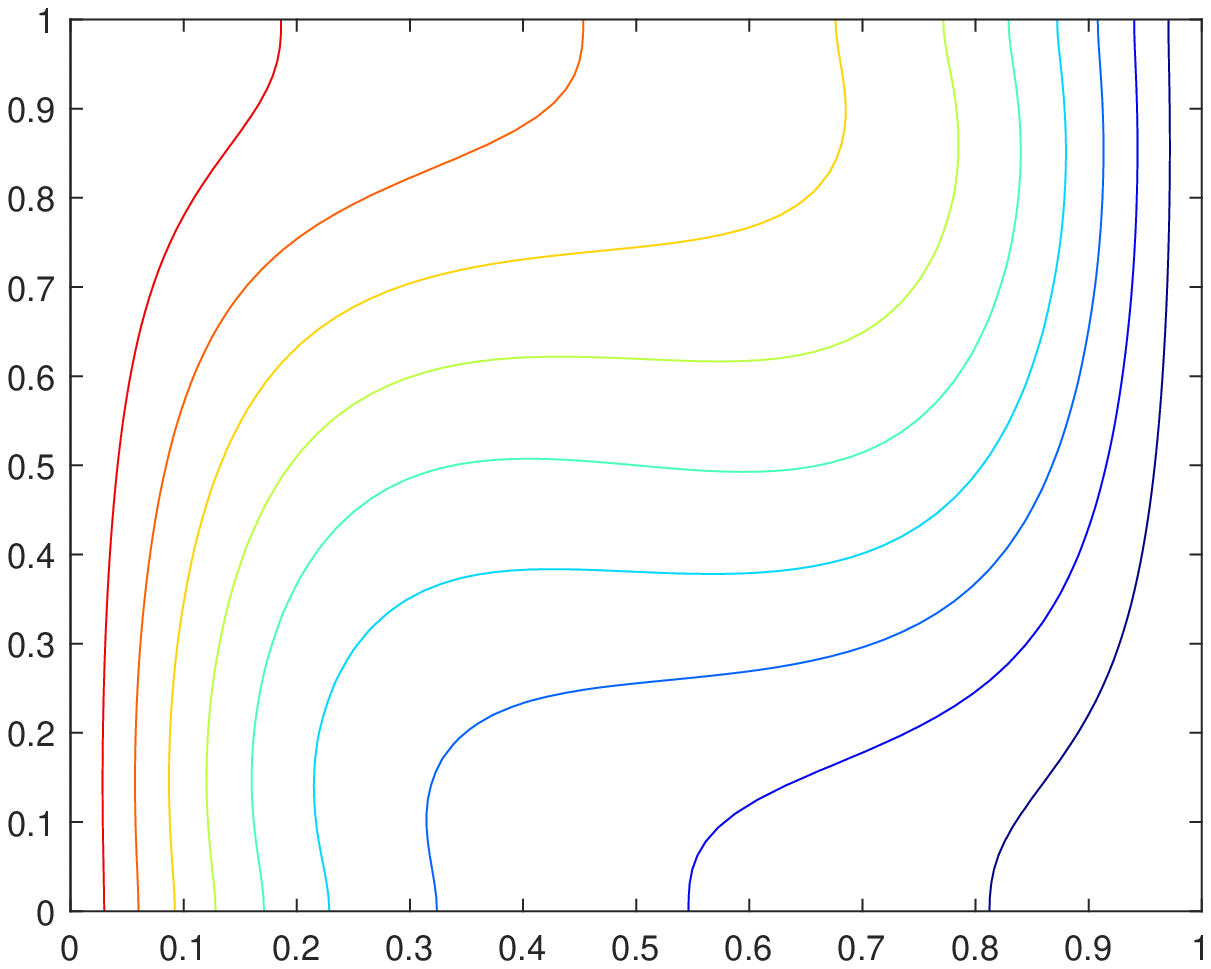} \hspace*{-0.6cm}
		\includegraphics[height=4.6cm, width=4.6cm]{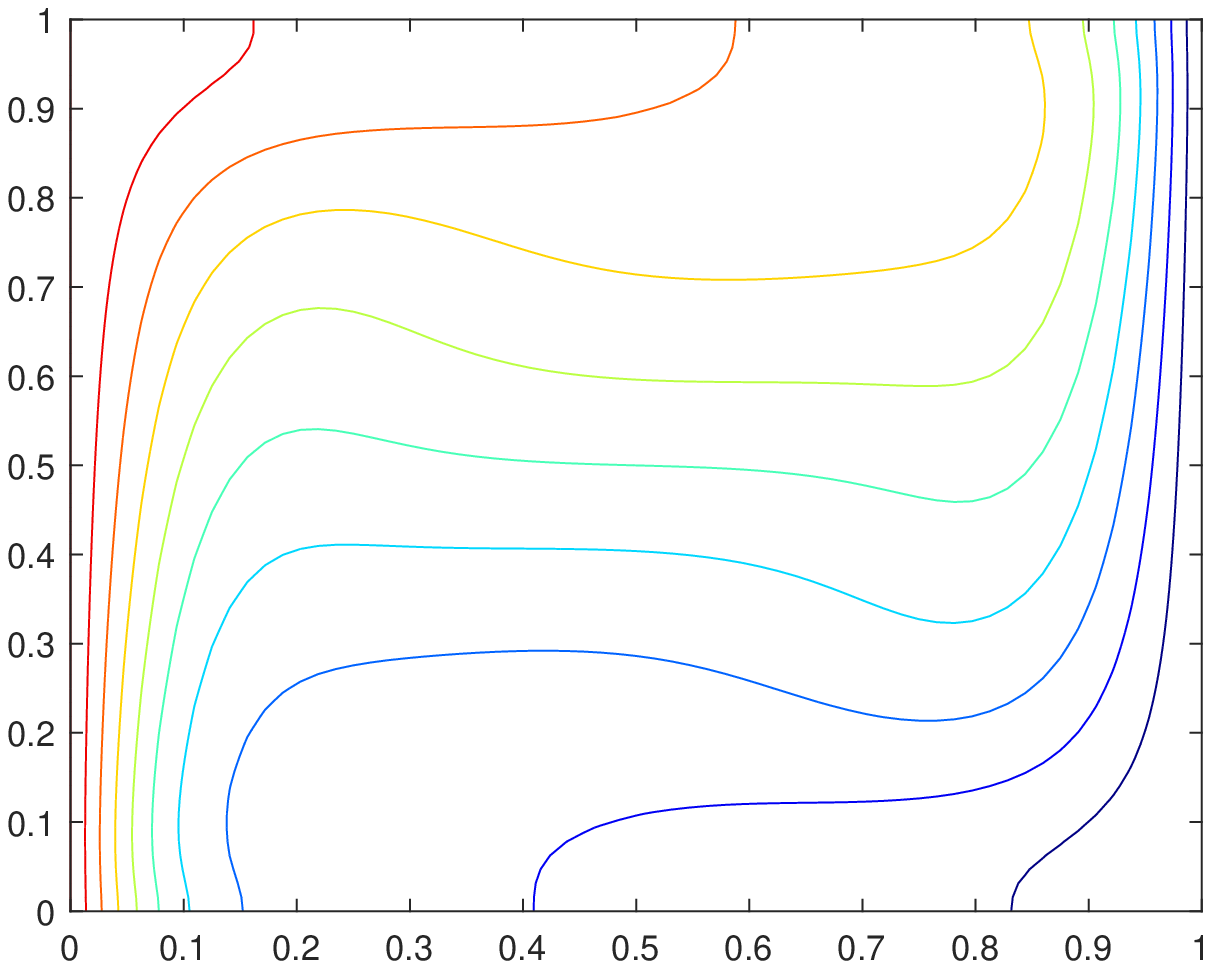} \hspace*{-0.6cm}
		\includegraphics[height=4.6cm, width=4.6cm]{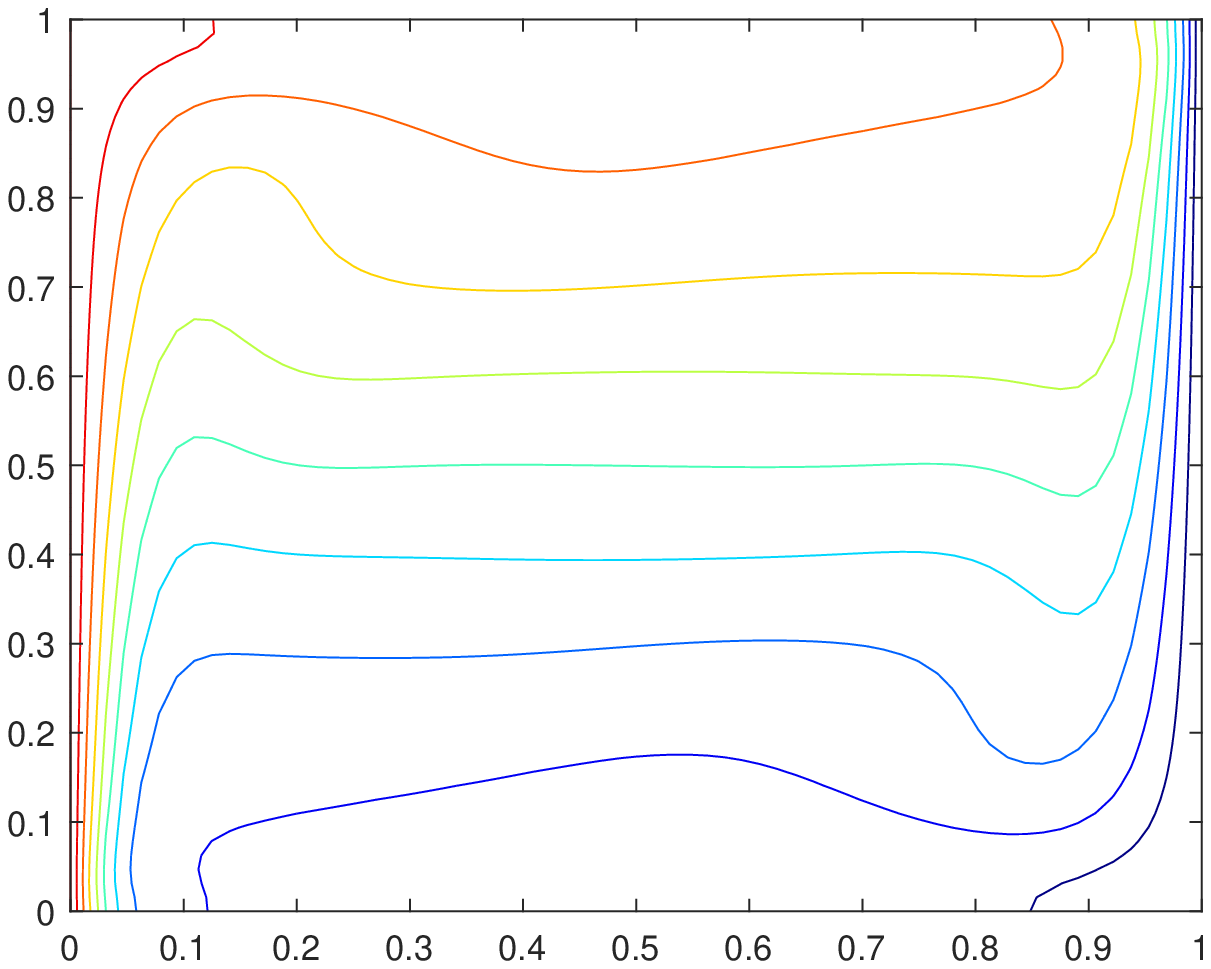}
	\end{center}
	\caption{Natural convection cavity:  streamlines (top panels) and isotherms (bottom panels), for ${\rm Ra}= 10^3, 10^4, 10^5$ and $10^6$,  respectively (from left to right).}
	\label{stream-isotherm} 
\end{figure}

Tables~\ref{tabla3-1} and~\ref{tabla3-2} present a
quantitative comparison between our results and those obtained by the 
benchmark solutions in the above papers. 
Table~\ref{tabla3-1}  shows the  maximum vertical velocity at $y= 0.5$,  
for $\Ra=10^4, 10^5$ and  $10^6$, while Table~\ref{tabla3-2} shows the 
maximum horizontal velocity at  $x=0.5$, using the same values of the 
Rayleigh number. Here the numbers in the parenthesis 
indicate the mesh size used by the respective reference.
We can observe that the results show good agreement, even for higher Rayleigh numbers.

\begin{table}[!h] 
\begin{center}
		{\small \begin{tabular}{lllllllllll}
		\hline  
		\hline
		${\rm Ra}$ & $\text{VEM}$ & $\text{Ref}$ \cite{ZHZ2014_camwa} &  $\text{Ref}$ \cite{Vahl83} & $\text{Ref}$ \cite{MNZ98} & $\text{Ref}$ \cite{Manzari99} & $\text{Ref}$ \cite{WPW2001} \\
		\hline

$10^4$ & 19.56(64) & 19.63(64) & 19.51(41) & 19.63(71) &19.90(71)& 19.79(101)\\
$10^5$ & 68.46(64) & 68.48(64) &68.22(81)  &68.85(71)  &70.00(71) & 70.63(101) \\
$10^6$ & 216.37(64)& 220.46(64)& 216.75(81)& 221.6(71) & 228.0(71) &227.11(101) \\
\hline  
\hline 
\end{tabular}}
\end{center} 
\caption{Natural convection cavity. Comparison of maximum vertical velocity $u_{1h}:=\Pi_h^1 \partial_y \psi$ at $y = 0.5$ with the VEM~\eqref{eq:VEM2} and mesh $\O^5_h$ ($h=1/64$).}
\label{tabla3-1}
		\end{table}

\begin{table}[!h] 
\begin{center}
{\small \begin{tabular}{lllllllllll}
\hline
\hline
${\rm Ra}$ & $\text{VEM}$ & $\text{Ref}$ \cite{ZHZ2014_camwa} &  $\text{Ref}$ \cite{Vahl83} & $\text{Ref}$ \cite{Manzari99} & $\text{Ref}$ \cite{WPW2001} \\
\hline
$10^4$ & 16.15(64) &16.19(64) & 16.18(41)& 16.10(71) &16.10(101)\\
$10^5$ & 34.80(64) &34.74(64) & 34.81(81)& 34.0(71)  &34.00(101) \\
$10^6$ & 65.91(64) & 64.81(64)& 65.33(81)& 65.40(71) &65.40(101) \\
\hline  
\hline 
\end{tabular}}
\end{center}
	\caption{Natural convection cavity. Comparison of maximum horizontal velocity $u_{2h}:=-\Pi_h^1 \partial_x \psi$ at $x = 0.5$  with the VEM~\eqref{eq:VEM2} and mesh $\O^5_h$ ($h=1/64$).}
	\label{tabla3-2}
\end{table}

Finally, for the natural convection problem we investigate the heat transfer coefficient 
along the vertical walls of the cavity in terms of the local Nusselt number $(\Nu)$, 
which is defined by: $\Nu(x,y) := - \partial_{\bn} \theta(x,y)$.   
Figure~\ref{fig3-1} describes the variation of local Nusselt
number at hot wall and cold wall, for different values of the Rayleigh number.  
It can be seen that the results show good agreement with the results presented in~\cite{ZHZ2014_camwa,Vahl83,MNZ98,Manzari99,WPW2001}.

\begin{figure*}[!h]
	\centering
	\subfigure[Nusselt in the hot wall]{%
		\includegraphics[width=0.40\linewidth, height=0.43\textwidth]{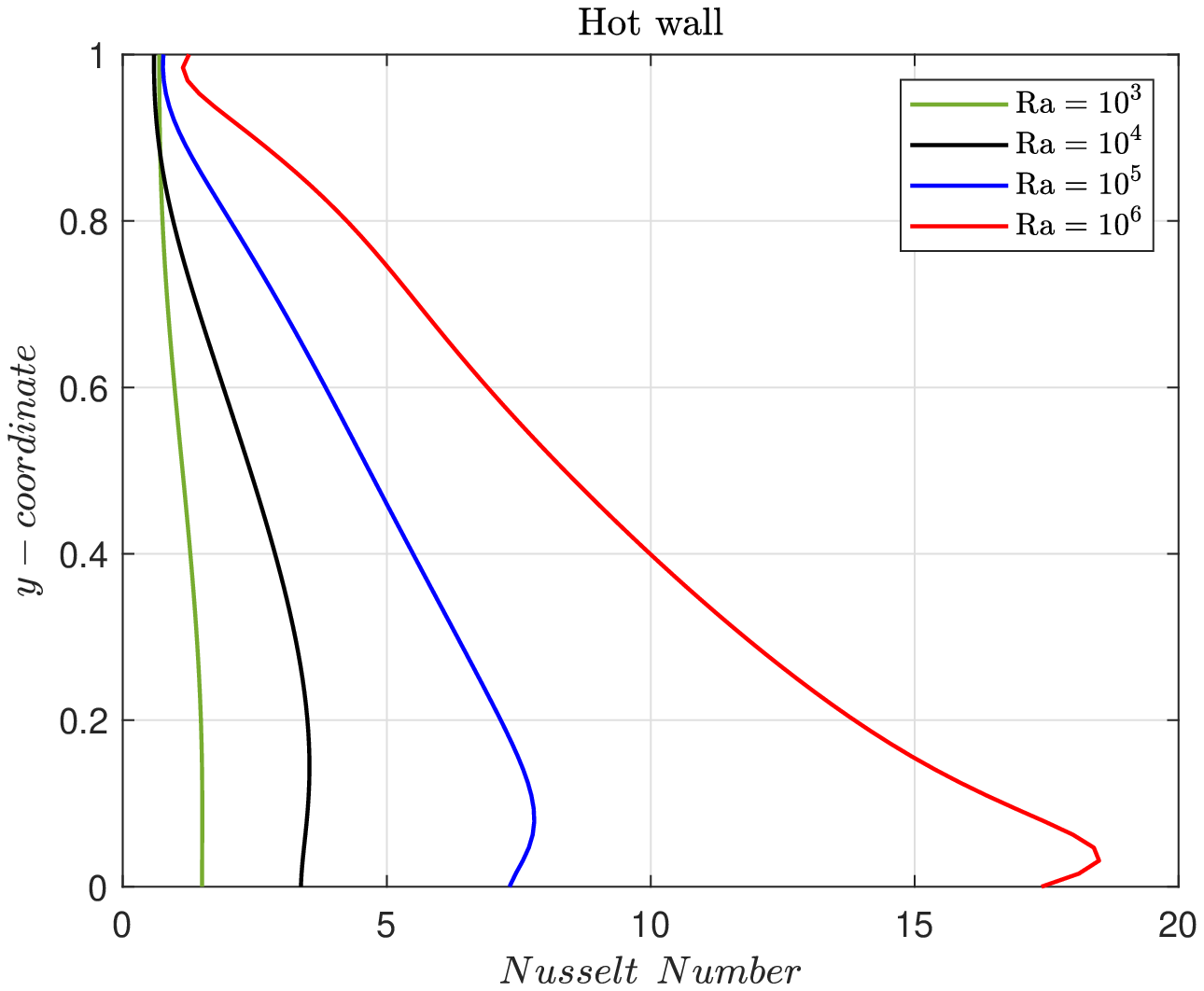}
		\label{nusselt_hot} } 
	\subfigure[Nusselt in the cold wall]{
		\includegraphics[width=0.40\linewidth, height=0.43\textwidth]{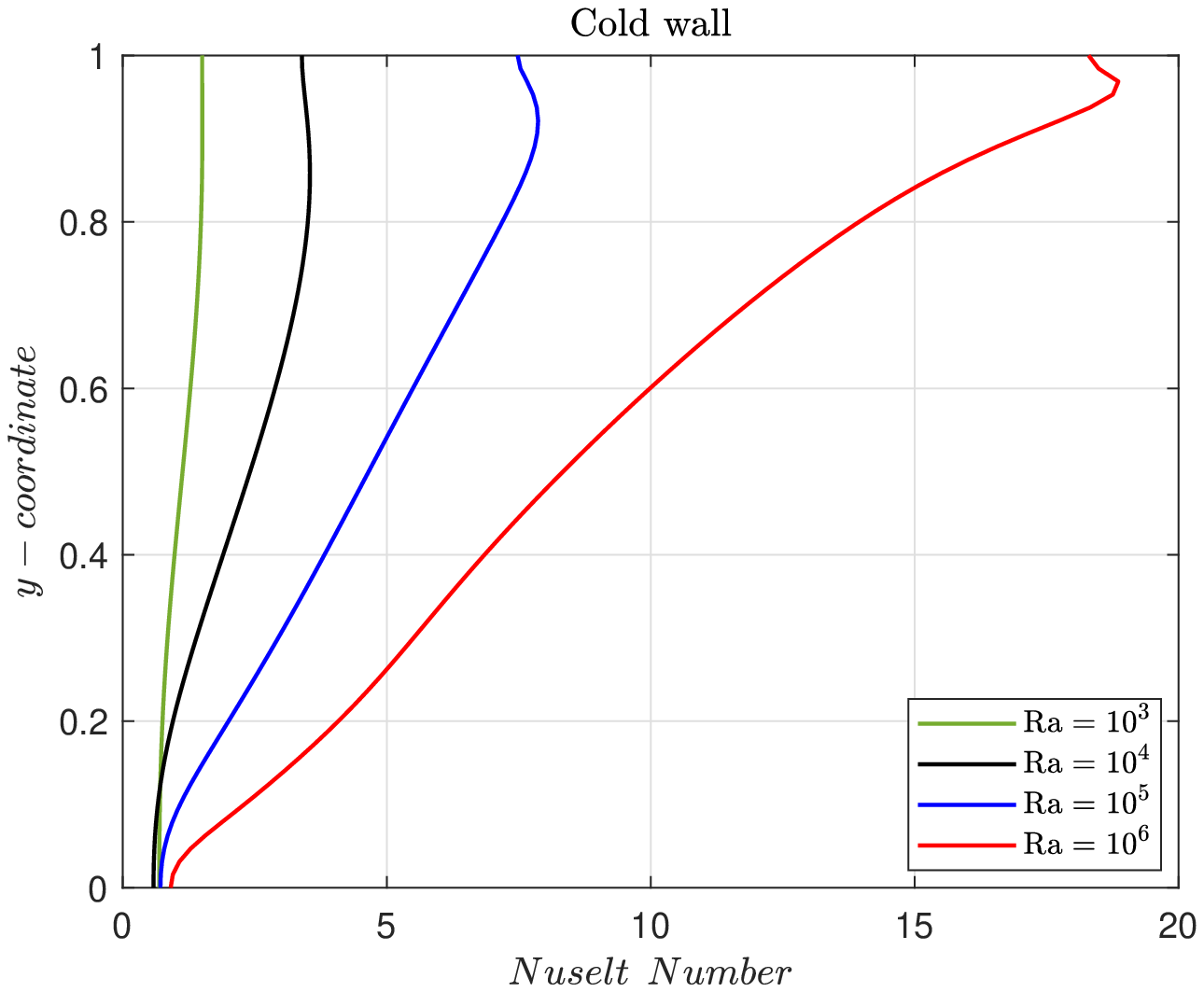}
		\label{nusselt_cold}} 
	\caption{Natural convection cavity.  Nusselt number along the hot wall (left) and the cold wall (right) for
		varying Rayleigh numbers, using the VEM~\eqref{eq:VEM2} and mesh $\O^5_h$, with $h=1/64$.}
\label{fig3-1}
\end{figure*}

\small
\section*{Acknowledgements} 	
	The first author a was partially supported by the Italian MIUR through the 
	PRIN grants n. 905	201744KLJL.
	The second author was partially supported by the National Agency
	for Research and Development, ANID-Chile through FONDECYT project 1220881,
	by project {\sc Anillo of Computational Mathematics for Desalination Processes} ACT210087,
	and by project Centro de Modelamiento Matem\'atico (CMM), FB210005,
	BASAL funds for centers of excellence.
	The third author was supported by the National Agency for Research
	and Development, ANID-Chile, Scholarship Program,
	Doctorado Becas Chile 2020, 21201910.	

\end{document}